\theoremstyle{plain}
\newtheorem{theorem}{Theorem}[section]
\newtheorem*{theorem*}{Theorem}
\newtheorem{lemma}[theorem]{Lemma}
\newtheorem{proposition}[theorem]{Proposition}
\newtheorem{conjecture}[theorem]{Conjecture}
\newtheorem{fact}[theorem]{Fact}
\newtheorem*{fact*}{Fact}
\newtheorem{corollary}[theorem]{Corollary}
\newtheorem*{claim*}{Claim}
\theoremstyle{definition}
\newtheorem{definition}[theorem]{Definition}
\newtheorem*{definition*}{Definition}
\newtheorem*{notation*}{Notation}
\theoremstyle{remark}
\newtheorem{remark}[theorem]{Remark}
\newtheorem*{remark*}{Remark}
\newtheorem{examples}[theorem]{Examples}
\newtheorem{example}[theorem]{Example}
\newtheorem*{examples*}{Examples}
\newtheorem{question}[theorem]{Question}
\newtheorem*{question*}{Question}
\newtheorem*{note*}{Note}
\newcommand{\Z}{\mathbb{Z}}
\newcommand{\N}{\mathbb{N}}
\newcommand{\Q}{\mathbb{Q}}
\newcommand{\C}{\mathbb{C}}
\newcommand{\F}{\mathbb{F}}
\newcommand{\rf}{{\mathbf k}}
\newcommand{\RV}{{\mathrm{\bf RV}}}
\newcommand{\vf}{{\mathbf{K}}}
\newcommand{\vg}{{\mathbf \Gamma}}
\newcommand{\IncVFA}{\mathrm{IncVFA}_0}
\newcommand{\IncODG}{\mathrm{IncODG}}
\newcommand{\IncDODG}{\mathrm{IncDODG}}
\newcommand{\ACFA}{\mathrm{ACFA}}
\newcommand{\ACVF}{\mathrm{ACVF}}
\newcommand{\VFA}{\mathrm{VFA}_0}
\renewcommand{\O}{\mathcal{O}}
\newcommand{\res}{\mathrm{res}}
\newcommand{\ac}{\mathrm{ac}}
\newcommand{\rk}{\mathrm{rk}} 
\newcommand{\val}{\mathrm{val}} 
\newcommand{\m}{\mathfrak{m}}
\renewcommand{\a}{\overline{a}}
\renewcommand{\b}{\overline{b}}
\newcommand{\NTP}{\mathrm{NTP}}
\newcommand{\Th}{\mathrm{Th}}
\newcommand{\TP}{\mathrm{TP}}
\newcommand{\NIP}{\mathrm{NIP}}
\newcommand{\weight}{\mathop{w}}
\newcommand{\td}{\mathrm{td}}
\newcommand{\order}{\mathrm{order}}
\newcommand{\complexity}{\mathrm{complexity}}
\newcommand{\Fix}{\mathrm{Fix}}
\newcommand{\tp}{\mathrm{tp}}
\newcommand{\qftp}{\mathrm{qftp}}
\newcommand{\car}{\mathrm{char}}
\newcommand{\indu}{\mathrm{ind}}
\newcommand{\elex}{\preccurlyeq}
\renewcommand{\L}{{\cal L}}
\renewcommand{\phi}{\varphi}
\newcommand{\LODG}{\L_{ODG}}
\newcommand{\cal}{\mathcal }
\def\Ind#1#2{#1\setbox0=\hbox{$#1x$}\kern\wd0\hbox to 0pt{\hss$#1\mid$\hss}
\lower.9\ht0\hbox to 0pt{\hss$#1\smile$\hss}\kern\wd0}
\def\Notind#1#2{#1\setbox0=\hbox{$#1x$}\kern\wd0\hbox to 0pt{\mathchardef
\nn="3236\hss$#1\nn$\kern1.4\wd0\hss}\hbox to 0pt{\hss$#1\mid$\hss}\lower.9\ht0
\hbox to 0pt{\hss$#1\smile$\hss}\kern\wd0}
\global\long\def\M{\operatorname{\mathbb{M}}}
\title{Valued difference fields and $\NTP_2$}
\author{Artem Chernikov}
\address{The Hebrew University of Jerusalem \\ Einstein Institute of Mathematics \\ Edmond J. Safra Campus, Givat Ram \\ 91904 Jerusalem, Israel}
\email{art.chernikov@gmail.com}
\thanks{The first author was supported by the Marie Curie Initial Training Network in Mathematical Logic - MALOA - From 
MAthematical LOgic to Applications, PITN-GA-2009-238381. }
\author{Martin Hils} 
\address{Institut de Math\'ematiques de Jussieu (UMR 7586 du CNRS), 
Universit\'e Paris Diderot Paris 7, UFR de Math\'ematiques - case 7012, 75205 Paris Cedex 13, France, and\newline\indent 
\'Ecole Normale Sup\'erieure, D\'epartement de math\'ematiques et applications, 45 rue d'Ulm, 75230 Paris Cedex 05, France (UMR 8553 du CNRS).}
\email{hils@math.univ-paris-diderot.fr}
\thanks{The second author was partially funded by the Agence Nationale de Recherche [MODIG, Projet ANR-09-BLAN-0047]. \\ 
\indent A substantial part of the research was carried out during the program 'Model Theory and Applications' at  the Max Planck Institut f\"{u}r Mathematik in Bonn. The 
second author would like to thank for the hospitality and the support of the MPIM}
\keywords{Model theory, valued difference field, non-standard Frobenius, NTP2\\ \indent 2010 \emph{Mathematics Subject Classification.} Primary: 03C45; Secondary: 03C60, 12L12}
\date{\today}
\begin{document}

\bibliographystyle{halpha}

\begin{abstract}
We show that the theory of the non-standard Frobenius automorphism, acting on an algebraically closed valued field of equal characteristic 0, is $\NTP_2$. More generally, in the contractive as well as 
in the isometric case, we prove that a $\sigma$-henselian 
valued difference field of equicharacteristic 0 is $\NTP_2$, provided both the residue difference 
field and the value group (as an ordered difference group) are $\NTP_2$. 
\end{abstract}

\maketitle

%%%

\section{Introduction}

Model theory has proven to be a fruitful framework to study fields with extra structure. Central examples include valued fields (e.g.\ the work of Haskell, Hrushovski and Macpherson on 
the theory of algebraically closed valued fields, $\ACVF$, see \cite{HHMStableDomination}) and difference fields (starting with the work of Chatzidakis and Hrushovski on the theory 
of algebraically closed fields with a \emph{generic} automorphism, $\ACFA$, see \cite{ChHr99}). In this paper, we are considering a combination of these, namely valued difference fields, 
i.e.\ valued fields with a distinguished automorphism (preserving the valuation ring).

Every non-principal ultraproduct of structures of the form $({\mathbb{F}}_p^a,Frob_p)$ is a model 
of $\ACFA_0$, i.e. the non-standard Frobenius is a generic automorphism. This is a deep 
result of Hrushovski \cite{Hru04} which required a twisted version of the Lang-Weil estimates.

One may consider the non-standard Frobenius acting on an algebraically closed \emph{valued} field, i.e.\ 
the limit theory of the Frobenius automorphism acting on an algebraically closed valued field of 
characteristic $p$ (where $p$ tends to infinity). Hrushovski 
\cite{Hru02} gives a natural axiomatisation of this limit theory in the language of valued difference fields (denoted by $\VFA$ in the sequel).
Durhan (formerly Azg{\i}n) \cite{Azg10} obtains an alternative axiomatisation, as well as an Ax-Kochen-Ershov 
principle for a certain class of valued difference fields.

The theory $\VFA$ is interesting from an algebraic point of view. The residue field together with the 
induced automorphism $\overline{\sigma}$ is a model of $\ACFA_0$, by the aforementioned result of Hrushovski. 
The induced automorphism $\sigma_{\Gamma}$ on the value group $\Gamma$ is \emph{$\omega$-increasing} (i.e. $\sigma_{\Gamma}(\gamma)>n\gamma$ for all $\gamma>0$ and $n\geq1$; 
valued difference fields satisfying this property will be called \emph{contractive}). 
Thus, $\Gamma$ gets the structure of a divisible torsion free ordered $\mathbb{Z}[\sigma]$-module (i.e. an ordered 
vector space over $\mathbb{Q}(\sigma)$, where $\sigma\gg1$ is an indeterminate). 
It is sufficient to add a \emph{$\sigma$-Hensel property} (see Definition \ref{D:sigma-H}) to obtain an axiomatisation of $\VFA$. 

Moreover, valued difference fields enter the study of (non-valued) difference fields by way 
of \emph{transformal specialisations} (see \cite{Hru04}). A better understanding  of valued difference fields 
will most probably shed new light on Hrushovski's proof of the non-standard Frobenius result. 

\

Work of Shelah on the model theoretic classification program \cite{MR1083551} had demonstrated the importance of understanding which combinatorial configurations a theory can encode. In the case of stable theories (i.e. theories that cannot encode linear order) he had developed a beautiful and fruitful theory of analysing types and models. Further work of Poizat, Hrushovski and other researchers, generalising the ideas of Zilber in the finite rank case, culminated in the creation of \emph{geometric stability theory} establishing deep connections between the geometry of forking independence and properties of algebraic structures (groups and fields) definable in the theory.
Later on it became clear that stability-theoretic methods can be generalised to larger contexts, and in the last twenty years there had been two main directions: simple theories  \cite{WagnerBook} and, more recently, NIP theories \cite{AdlNIP,Sim12}. A characteristic property of these developments is that the motivation is coming both from purely model theoretic considerations and  from the study of particular important algebraic structures: $\ACFA$ as a prototypical example of a simple unstable theory, and $\ACVF$ as a typical example of an (unstable) $\NIP$ theory. These lines of research had found numerous applications \cite{HruMM, HruKazh, HruLoes}.

Observe that the theory $\VFA$ is neither \emph{simple} (due to the total order in the value group) nor \emph{$\NIP$} (due to the independence property which holds in the residue 
field). 

It turns out that in the 80's Shelah had defined another class ---  $\NTP_2$ theories, or theories without the tree property of the second kind \cite{MR595012}. This class generalises both simple and $\NIP$ theories, and contains new examples (e.g. any ultraproduct of $p$-adics is $\NTP_2$ \cite{Che12}). Recently it had attracted attention, largely motivated by Pillay's question on equality of forking and dividing over models in $\NIP$, and a theory of forking for $\NTP_2$ theories  had been developed  \cite{CheKap, Che12, CheBY}.

In this paper we show the following general theorem.

\begin{theorem*}[\ref{T:Main}]
Let ${\cal K}=(K,k,\Gamma, v, \ac)$ be a valued difference fields of residue characteristic 0 (where $\ac$ is an angular component map commuting with $\sigma$). Assume that $T=\mathrm{Th}({\cal K})$ eliminates $\vf$-quantifiers, and that both the residue field $k$ (as a difference field) and the value group $\Gamma$ (as an ordered difference group) are $\NTP_2$. 
Then, ${\cal K}$ is $\NTP_2$.
\end{theorem*}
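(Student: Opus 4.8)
The plan is to establish $\NTP_2$ via a relative quantifier-elimination/Ax-Kochen-Ershov type argument, reducing the tree property of the second kind in $\mathcal K$ to the tree property in one of the two "base" structures $k$ and $\Gamma$. Since $T$ eliminates $\vf$-quantifiers, every formula $\phi(x,y)$ with $x$ a $\vf$-variable (or tuple of such) is equivalent to a Boolean combination of formulas whose only free $\vf$-variables occur inside terms feeding into the $\rv$- (or residue- and value-) sorts. So the first step is to reduce to the case where the formula whose instances form a $\mathrm{TP}_2$-array is of the form $\psi(\rv(f(x,\bar a)))$ for a tuple of $\sigma$-polynomials $f$ over the parameters and $\psi$ a formula in the $\RV$-sort (equivalently, after passing to $\ac$, in the residue field and value group). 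Thus an $\NTP_2$ instance tree in $\mathcal K$ produces, coordinatewise, data in $k$ and in $\Gamma$.

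The core of the argument is then to show that the $\RV$-sort, as a stably embedded structure with its induced difference structure, is $\NTP_2$, given that $k$ and $\Gamma$ are. This is a two-sorted (short exact sequence $1 \to k^\times \to \RV \to \Gamma \to 0$ of difference groups, with a difference-multiplicative section available since we have an $\ac$-map commuting with $\sigma$) statement: $\NTP_2$ is preserved under such "orthogonal-like" combinations because one can analyze a $\mathrm{TP}_2$-array in $\RV$ by first looking at the $\val \circ \rv$-projection into $\Gamma$ — if the array already witnesses inconsistency at the level of $\Gamma$ we are done by $\NTP_2$ of $\Gamma$ — and otherwise the array may be assumed, after an $\Aut$-move and compactness, to live inside a single coset of $k^\times$, where it becomes a $\mathrm{TP}_2$-array of $k$-formulas, contradicting $\NTP_2$ of $k$. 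The formal tool here is the fact that $\NTP_2$ is witnessed by a single formula (via the "burden"/array characterization), together with an induction on the number of $\RV$-variables and a careful splitting of each row of the array into its "$\Gamma$-part" and "$k$-part"; counting burdens, one gets $\mathrm{bdn}(\RV) \le \mathrm{bdn}(k) + \mathrm{bdn}(\Gamma)$ in the appropriate sense, hence $\NTP_2$.

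Finally, the passage from "$\RV$ is $\NTP_2$" back to "$\mathcal K$ is $\NTP_2$" uses the $\vf$-quantifier elimination once more together with the observation that, in a $\mathrm{TP}_2$-array coming from $\mathcal K$, the $\vf$-variable enters only through finitely many $\sigma$-polynomial terms $f_1,\dots,f_m$, and the map $x \mapsto (\rv(f_1(x)),\dots,\rv(f_m(x)))$ has image controlled by $\RV$-definable sets; an inconsistent row in $\mathcal K$ pulls back from an inconsistent row in $\RV$ (using that over a model, realizing a type in $\mathcal K$ reduces to realizing the corresponding $\RV$-type, which is exactly where $\sigma$-henselianity — encoded in the $\vf$-quantifier elimination hypothesis — is used). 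I expect the main obstacle to be the middle step: controlling the induced difference structure on $\RV$ and making the burden-additivity for the short exact sequence of difference groups precise, since the section commuting with $\sigma$ is essential and one must check that no extra $\mathrm{TP}_2$-configuration is created by the interaction of the $k$-part and the $\Gamma$-part through $\sigma$ — this is where the equicharacteristic $0$ and the $\ac$-hypothesis do the real work.
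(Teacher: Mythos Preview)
Your overall shape is reasonable, and the step you flag as the main obstacle is in fact the easy one: by Lemma~\ref{L:QE-consequences}(1) the sorts $\rf$ and $\vg$ are orthogonal, stably embedded, and carry only their pure structure, so (with the $\ac$-section splitting the sequence) your $\RV$ is essentially the product $(k,\Gamma)$, and $\NTP_2$ passes to orthogonal products without difficulty.

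The genuine gap is in your last paragraph. You write that $x$ enters only through finitely many terms $f_1,\ldots,f_m$ and consider the map $x\mapsto(\rv(f_1(x)),\ldots,\rv(f_m(x)))$, but the $f_i$ are $\sigma$-polynomials in $x$ \emph{and} $y$: the terms actually occurring are $\val(p_i(x,a_{ij}))$ and $\ac(p_i(x,a_{ij}))$, which mix the object variable with the array parameters. There is no fixed finite $\RV$-tuple depending on $x$ alone through which the whole row factors. In the pure valued-field case one recovers such a separation via cell decomposition---ultimately because a one-variable polynomial splits into linear factors $x-c$, so $\val(p(x,y))$ decomposes into $\val(x-c_l(y))$'s---but there is no analogue for $\sigma$-polynomials, and your claim that ``an inconsistent row in $\mathcal K$ pulls back from an inconsistent row in $\RV$'' does not go through as stated.

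The paper's argument takes a different route. Starting from a strongly indiscernible $\TP_2$-array $(a_{ij})$ with first column realised by some $a$ and rows indiscernible over $a$ (Lemma~\ref{lem: very indisc witness of TP2}), it repeatedly \emph{enlarges the array on the parameter side} using two moves: a careless extension (Lemma~\ref{lem: adding carelessly}) to close $a_{00}$ up to an $\ac$-valued difference subfield, and a careful extension (the Array Extension Lemma~\ref{prop: adding carefully}, which is exactly where $\NTP_2$ of $\rf$ and of $\vg$ enters) to absorb $\ac(K\langle a\rangle)$ and $\Gamma_{K\langle a\rangle}$ into the parameter structure. After $\omega$ iterations, $a_{00}^\omega$ enumerates a valued difference field $K$ with $K\langle a\rangle/K$ \emph{immediate}. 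At that point $\tp(a/K)$ is implied by its quantifier-free part (Lemma~\ref{L:QE-consequences}(3)), and every quantifier-free formula is $\NIP$ (Lemma~\ref{L: QFisNIP}, by forgetting $\sigma$ and embedding into a model of $\ACVF_{0,0}$); hence $\tp(a/a_{00}^\omega)$ is $\NTP_2$-determined and Lemma~\ref{L:NTP2determined} yields the contradiction. The idea your proposal is missing is precisely this reduction to an immediate extension by saturating the parameter side of the array, which substitutes for the unavailable separation of variables.
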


Our method of proof combines a new result on extending indiscernible arrays by parameters 
coming from $\NTP_2$ sorts with a back-and-forth system coming from the elimination of field quantifiers. In this way, we reduce the statement to a situation where one 
deals with immediate extensions, and these extensions are controlled by $\NIP$ formulas.

Applying the theorem we obtain new and interesting algebraic examples of $\NTP_2$ theories:
\begin{itemize}
\item $\VFA$ is $\NTP_2$. More generally, a contractive $\sigma$-henselian valued difference field of equicharacteristic 0 
(where an Ax-Kochen-Ershov principle holds by \cite{Azg10}) is $\NTP_2$, provided both the theory of the 
value group (with the induced automorphism) and the theory of the residue field (with the induced 
automorphism) are $\NTP_2$.

\item We prove a similar result in the isometric case, where an Ax-Kochen-Ershov principle holds 
as well \cite{Sca03,BeMaSc07,AzVa11}.

\end{itemize}

Similar transfer results hold in the context of valued fields (i.e. when $\sigma$ is the identity), where they are derived from the usual Ax-Kochen-Ershov 
principle: Delon \cite{Del81} showed this for $\NIP$, Shelah \cite{She09} for strongly dependent (a strengthening of $\NIP$) and 
the first author \cite{Che12} for $\NTP_2$ (and finiteness of burden).

A quick overview of the paper. In Section \ref{S:Prelim}, we recall the basic model-theoretic results on valued difference fields and elimination of field quantifiers. Section \ref{S:NTP2} 
contains general results on manipulating indiscernible arrays in $\NTP_2$ theories. The main theorem of the 
paper is then proved in Section \ref{sec: general theorem}, and applications are given in Section \ref{sec: applications of the main theorem}. We end with a list of open problems and some related observations, in particular discussing when ordered modules are $\NIP$.

\subsection*{Acknowledgements}
We are grateful to the referee for numerous corrections and suggestions on improving the paper.

\section{Preliminaries on valued difference fields} \label{S:Prelim}
In this section, we present the necessary material on valued difference fields which we will need for our purposes. 

Moreover, we will give a 
survey of the results in the contractive case, in particular with respect to $\VFA$, the theory which motivated our study. Strictly speaking, these 
results are not needed in the main theorem. Nevertheless, we believe this is useful for the reader, as the results in question are not so widely known and easily accessible. Most of the material in the contractive case may be found in \cite{Azg10}, but we will also need the context from \cite{Azg07}, where an angular component is used instead of a cross-section. See also \cite{Pal10,Gia11}, and the unpublished notes \cite{Hru02} of Hrushovski, where 
most of the ideas in the contractive case already appear. 

At the end of the section, we will briefly mention the isometric case which historically preceded the contractive case \cite{Sca00,Sca03,BeMaSc07,AzVa11}.

\subsection{Ordered difference groups}\label{SubS:ODG}\label{Sub:ODG}
An \emph{ordered difference group }is a structure of the form $\langle \Gamma,0,+,-,<,\sigma\rangle$, where $\langle\Gamma,0 ,+,-,<\rangle$ is an ordered 
abelian group and $\sigma$ is an automorphism of $\langle\Gamma,0 ,+,-,<\rangle$. The automorphism $\sigma$ is called \emph{$\omega$-increasing} 
if $\sigma(\gamma)> n\gamma$ for all $\gamma\in\Gamma_{>0}$ and all natural numbers $n$; the corresponding difference group will also be called $\omega$-increasing. We treat ordered difference groups as first order structures in the 
language $\LODG=\{0,+,-,<,\sigma\}$; the class of $\omega$-increasing ordered difference groups may be axiomatised, and we denote it by 
$\IncODG$. 

Any $\langle \Gamma,+,<,\sigma\rangle\models\IncODG$ is an ordered (and so in particular torsion free) $\Z[\sigma, \sigma^{-1}]$-module, where $\Z[\sigma]$ is the ordered ring of polynomials 
in the indeterminate $\sigma$ with $\sigma\gg1$. For $p=p(\sigma)=\sum z_i\sigma^{i}\in\Z[\sigma, \sigma^{-1}]$ and $\gamma \in \Gamma$, one puts $p\cdot \gamma:=\sum z_i\sigma^{i}(\gamma)$. 
Conversely, any ordered $\Z[\sigma,\sigma^{-1}]$-module gives rise to a model of $\IncODG$. When divisible, such modules correspond to ordered vector spaces 
over the (ordered) fraction field $\Q(\sigma)$ of $\Z[\sigma]$. The theory of non-trivial divisible ordered $\Z[\sigma, \sigma^{-1}]$-modules will be denoted by $\IncDODG$.

The following fact is easy (see e.g.\ \cite{Pal10}).

\begin{fact}\label{F:ValInc}
The theory $\IncDODG$ is the model-completion of $\IncODG$. In particular, $\IncDODG$ eliminates quantifiers and is $o$-minimal.
\end{fact}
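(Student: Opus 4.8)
\emph{Strategy.} The crux is quantifier elimination for $\IncDODG$; once this is established, model-completeness is immediate, $o$-minimality follows quickly, and the model-completion statement follows by a soft argument. The plan is: (i) identify the models of $\IncDODG$ algebraically as non-trivial ordered $\Q(\sigma)$-vector spaces, using the $\omega$-increasing hypothesis in an essential way; (ii) eliminate quantifiers, either by citing the classical quantifier elimination for ordered vector spaces over an ordered field or by running the corresponding back-and-forth directly; (iii) deduce the remaining assertions.

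\emph{Reduction to ordered $\Q(\sigma)$-vector spaces.} Let $\Gamma\models\IncODG$. A standard domination estimate shows that for every nonzero $p=\sum_i a_i\sigma^i\in\Z[\sigma,\sigma^{-1}]$ with nonzero leading coefficient $a_n$ and every $\gamma\neq 0$ one has $\operatorname{sign}(p\cdot\gamma)=\operatorname{sign}(a_n)\operatorname{sign}(\gamma)$: for $\gamma>0$ the chain $0<\gamma<\sigma(\gamma)<\cdots<\sigma^n(\gamma)$ satisfies $\sigma^n(\gamma)>N\sigma^{n-1}(\gamma)$ for every $N$ (apply $\omega$-increasingness to $\sigma^{n-1}(\gamma)>0$), so the top term $a_n\sigma^n(\gamma)$ dominates the remaining ones in absolute value. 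In particular every nonzero $p$ acts injectively, so $\Gamma$ is $\Z[\sigma,\sigma^{-1}]$-torsion-free; if moreover $\Gamma\models\IncDODG$, then divisibility says each nonzero $p$ also acts surjectively, hence $\Gamma$ is a module over the localization $\Q(\sigma)=\operatorname{Frac}(\Z[\sigma,\sigma^{-1}])$, i.e.\ a $\Q(\sigma)$-vector space, and the sign computation says exactly that its ordering is compatible with the unique ordering of $\Q(\sigma)$ in which $\sigma\gg 1$. Conversely, every non-trivial ordered $\Q(\sigma)$-vector space, with $\sigma$ read as the scalar, is a model of $\IncDODG$. Finally, clearing denominators and negative powers of $\sigma$ (harmless, since multiplication by a positive element of $\Q(\sigma)$, in particular by a power of $\sigma$, is an order-automorphism fixing $0$) shows that every atomic formula with $\Q(\sigma)$-coefficients is equivalent, modulo $\IncDODG$, to an $\LODG$-atomic formula, up to the direction of an inequality. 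Thus it suffices to eliminate quantifiers in the enriched language with a scalar multiplication by each element of $\Q(\sigma)$.

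\emph{Quantifier elimination.} At this point one may cite the classical fact that the theory of non-trivial ordered vector spaces over an ordered field has quantifier elimination and is $o$-minimal; alternatively, run the back-and-forth directly. Given $\Gamma_1,\Gamma_2\models\IncDODG$ with $\Gamma_2$ sufficiently saturated and a common $\LODG$-substructure $A$, first extend the partial isomorphism to the $\Q(\sigma)$-span of $A$ (possible and unique by torsion-freeness and divisibility, and order-preserving by the sign computation above); then adjoin a single new element $b$: either $b$ already lies in this span, or $b$ realizes a proper cut over it, and by non-triviality together with saturation the matching cut is realized by some $b'\in\Gamma_2$, so the map extends. I expect the genuinely delicate point to be precisely this cut-realization step and the verification that the resulting map is an embedding — but this is exactly the argument that proves quantifier elimination for divisible ordered abelian groups, now carried out over the ordered field $\Q(\sigma)$ rather than $\Q$; the $\omega$-increasing hypothesis intervenes only, but decisively, in the reduction of the previous paragraph.

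\emph{Conclusion.} Quantifier elimination gives model-completeness of $\IncDODG$, and it gives $o$-minimality, since a quantifier-free $\LODG$-formula in one variable over parameters is a Boolean combination of conditions $\lambda\cdot x=a$ and $\lambda\cdot x<a$, each defining a point or a ray. For the model-completion statement, note first that $\IncDODG\vdash\IncODG$, and that every $\Gamma\models\IncODG$ embeds into a model of $\IncDODG$: being $\Z[\sigma,\sigma^{-1}]$-torsion-free (same domination estimate) with $\sigma$ bijective, $\Gamma$ embeds into $\Gamma\otimes_{\Z[\sigma,\sigma^{-1}]}\Q(\sigma)$, on which the order of $\Gamma$ extends compatibly (one passes to $\Q(\sigma)$ itself in the degenerate case $\Gamma=\{0\}$). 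Since $\IncDODG$ has quantifier elimination, for any $\Gamma\models\IncODG$ any two models of $\IncDODG$ extending $\Gamma$ agree on all $\LODG$-sentences with parameters in $\Gamma$; hence $\IncDODG\cup\operatorname{Diag}(\Gamma)$ is complete, so $\IncDODG$ is the model-completion of $\IncODG$.
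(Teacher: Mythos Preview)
Your proof is correct. The paper does not give its own argument for this fact; it simply records it as ``easy'' and refers to \cite{Pal10}. Your route --- using the $\omega$-increasing hypothesis to obtain the domination estimate, thereby identifying models of $\IncDODG$ with non-trivial ordered $\Q(\sigma)$-vector spaces (for the ordering with $\sigma\gg1$), and then invoking (or reproving via back-and-forth) the classical quantifier elimination for ordered vector spaces over an ordered field --- is the standard one and is essentially what one finds in the cited reference. The reduction back from the enriched $\Q(\sigma)$-scalar language to $\LODG$ by clearing denominators and negative powers of $\sigma$ is the right observation to conclude QE in the original language.
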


For $\gamma\in\Gamma\models\IncODG$ and $\zeta=(z_0,\ldots,z_n)\in\Z^{n+1}$, we will sometimes denote $\sum_{i=0}^{n}z_i\sigma^{i}(\gamma)$  by 
$\sigma^{\zeta}(\gamma)$.

\subsection{Valued difference fields}\label{Sub:ValDiffFields}\mbox{}

\noindent
{\it Notation and conventions.}\\
By a \emph{difference field }we will always mean a field $K$ together with a distinguished automorphism $\sigma$, i.e.\ what is sometimes called an 
\emph{inversive }difference field.

\smallskip

If $K$ is a difference field, one may form the ring of difference polynomials $K[X]_{\sigma}:=K[X,\sigma(X),\sigma^2(X),\ldots]$. Then $\sigma$ extends naturally to an endomorphism of 
$K[X]_{\sigma}$, and in this way $K[X]_\sigma$ is a difference ring extension of $K$.

\smallskip
If $K\subseteq L$ is an extension of difference fields and $a$ is a tuple from $L$, then $K\langle a\rangle$ denotes the difference 
field generated by $a$ over $K$; as a field, it is given by $K(\sigma^z(a),\, z\in\Z)$. An element $a\in L$ is called \emph{$\sigma$-algebraic} over $K$ 
if $g(a)=0$ for some non-constant $g(X)\in K[X]_{\sigma}$; else, it is called \emph{$\sigma$-transcendent} over $K$. 
%In the first case, if $g$ is of 
%minimal complexity, it is said to be a \emph{minimal $\sigma$-polynomial } of $a$ over $K$.

\medskip

Recall that a \emph{valued field} is given by a surjective map $\val:K\rightarrow\Gamma_{\infty}$, where $K$ is a field and $\Gamma_{\infty}=\Gamma\cup\{\infty\}$, with $\Gamma$ an ordered abelian group and 
$\infty$ a distinct element satisfying
\begin{itemize}
\item $\val(x)=\infty\iff x=0$;
\item $\val(xy)=\val(x)+\val(y)$ for all $x,y\in K$;
\item $\val(x+y)\geq\min\{\val(x),\val(y)\}$ for all $x,y\in K$.
\end{itemize}
Here, the order is extended to a total order on $\Gamma_{\infty}$ making $\infty$ the maximal element, and the addition is extended so that $\infty$ becomes an absorbing element.

We will usually not distinguish between $\Gamma$ and $\Gamma_{\infty}$ and suppress $\infty$ in our paper.

The \emph{valuation ring} is given by $\O=\{x\in K\mid \val(x)\geq0\}$. It is a local ring, with maximal ideal $\m=\{x\in K\mid \val(x)>0\}$. The \emph{residue map }is 
given by $\res:\O\rightarrow \O/\m=: k$, and $k$ is called the \emph{residue field} of $K$. Sometimes, we will use $\a$ instead of $\res(a)$. We often write $\Gamma_K$ or $k_K$ to stress that we deal with the 
value group or residue field of the valued field $K$. An extension $K\subseteq L$ gives rise to extensions $k_K\subseteq k_L$ and $\Gamma_K\subseteq\Gamma_L$. 

\smallskip

A \emph{valued difference field }is a valued field $K$ together with a distinguished automorphism $\sigma$ satisfying $\sigma(\O)=\O$. Note that $\sigma$ induces 
an automorphism $\overline{\sigma}$ of the residue field, making it a difference field. Similarly, $\sigma$ induces an 
automorphism $\sigma_\Gamma$ of the value group, making it an ordered difference group. 
Most of the time, we will drop the subscript and use $\sigma$ for the automorphism on the value groups as well.

\smallskip

We treat valued difference fields in the three-sorted language $\L_{\rf,\vg,\sigma}$, consisting of 
\begin{itemize}
\item the language of difference rings $\L_{\vf}=\{0,1,+,-,\times,\sigma\}$ on the valued field sort denoted by $\vf$;
\item (a copy of) the language of difference rings  $\L_{\rf}=\{0,1,+,-,\times,\overline{\sigma}\}$ on the residue field sort denoted by $\rf$;
\item the language of ordered difference groups (with an additional infinite element) $\{0,+,-,<,\infty,\sigma_{\Gamma}\}$ on the value group sort denoted by $\vg$, and
\item the functions $\val:\vf\rightarrow\vg$ and $\res:\vf\rightarrow\rf$ between the sorts. (When considering a valued field as an $\L_{\rf,\vg,\sigma}$-structure, 
we may make the function $\res$ total by sending elements of negative valuation to $0\in\rf$.)
\end{itemize}

\medskip

An \emph{$\ac$-valued difference field} is a valued difference field $\mathcal{K}=(K,\Gamma,k,\sigma)$ together with an \emph{angular component map} $\ac:K\rightarrow k$ satisfying the following three properties:

\begin{itemize}
\item $\ac(x)=0$ iff $x=0$;
\item $\ac\upharpoonright_{K^{\times}}:K^{\times}\rightarrow k^{\times}$ is a group homomorphism commuting with $\sigma$;
\item for all $x\in K$ with $\val(x)=0$, one has $\ac(x)=\res(x)$.
\end{itemize}

We treat $\ac$-valued difference fields in the three-sorted language 
$\L_{\rf,\vg,\sigma}\cup\{\ac\}$. Note that the corresponding language without $\sigma$, $\overline{\sigma}$ and $\sigma_{\Gamma}$, denoted by 
$\L_{\rf,\vg}\cup\{\ac\}$, is precisely the \emph{language of Pas}.

\smallskip

If $A$ is a substructure of ${\cal K}=(K,\Gamma_K,k_K)$, we write $\vf(A)$ for the elements of $A$ which are in sort $\vf$. Similarly, we 
have $\vg(A)\subseteq \Gamma_K$ and $\rf(A)\subseteq k_K$. Note that in general $\val(\vf(A))$ is a proper subset of $\vg(A)$, and similarly for $\res$ and $\ac$.

\subsection{Elimination of field quantifiers in $\ac$-valued difference fields}\label{Sub:Elim-K-ac}
We gather here some useful consequences of the elimination of field quantifiers in 
$\ac$-valued difference fields. We may thus treat various cases in one common framework, 
namely $\sigma$-henselian valued difference fields, both in the contractive and in the 
isometric case, and henselian valued fields (without distinguished isomorphism).

\smallskip

The following lemma is a consequence of compactness, taking into account that 
there are no function symbols in our language with arguments in $\vg$ or $\rf$ and target sort $\vf$. 

\begin{lemma}\label{L:Back-and-forth-QE}
Let $T$ be an $\L_{\rf,\vg,\sigma}\cup\{\ac\}$-theory. The following are equivalent:
\begin{enumerate}
\item $T$ eliminates $\vf$-quantifiers.
\item Let $M$ and $M'$ be models of $T$, with substructures $A=({\vf}(A),\vg(A),\rf(A))\subseteq M$ and 
$A'=({\vf}(A'),\vg(A'),\rf(A'))\subseteq M'$. Let $f=(f_{\vf},f_{\vg},f_{\rf}):A\cong 
A'$ be an isomorphism such that 
\begin{itemize}
\item $f_{\vg}:\vg(A)\rightarrow \vg(A')$ 
is an $\{0,+,-,<,\infty,\sigma_{\Gamma}\}$-elementary map, and 
\item $f_{\rf}:\rf(A)\rightarrow \rf(A')$ 
is an $\L_{\rf}$-elementary map.
\end{itemize}
Then $f$ is an elementary map.
\end{enumerate}
\end{lemma}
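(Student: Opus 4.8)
The plan is to prove the equivalence by the standard compactness-plus-back-and-forth argument, using the special feature of the language that no function symbol has arguments in $\vg$ or $\rf$ and target $\vf$.

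\medskip
\emph{Proof sketch.} The implication $(2)\Rightarrow(1)$ is the routine direction: assuming the back-and-forth property, one shows that any two models $M,M'$ of $T$ that satisfy the same $\vf$-quantifier-free sentences with parameters in a common substructure agree on all sentences, hence $T$ eliminates $\vf$-quantifiers. Concretely, one fixes an $\L_{\rf,\vg,\sigma}\cup\{\ac\}$-formula $\phi(\x)$ and argues, via the usual Shoenfield-style criterion, that it suffices to realize the following: whenever $M,M'\models T$, $A\subseteq M$ and $A'\subseteq M'$ are substructures, and $f\colon A\cong A'$ is an isomorphism which is $\{0,+,-,<,\infty,\sigma_\Gamma\}$-elementary on $\vg(A)$ and $\L_{\rf}$-elementary on $\rf(A)$ and also preserves all \emph{$\vf$-quantifier-free} $\L_{\rf,\vg,\sigma}\cup\{\ac\}$-formulas, then $f$ is elementary. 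But such an $f$ satisfies the hypothesis of (2) (the extra preservation of $\vf$-quantifier-free formulas is automatic for an isomorphism of substructures), so (2) gives elementarity. Then a standard compactness argument upgrades "preserved by all isomorphisms of substructures with the stated elementarity conditions" to "$T$-provably equivalent to a formula without $\vf$-quantifiers".

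\medskip
For $(1)\Rightarrow(2)$, assume $T$ eliminates $\vf$-quantifiers and let $f\colon A\cong A'$ be as in (2). To show $f$ is elementary it is enough, by elimination of $\vf$-quantifiers, to show that $f$ preserves all $\vf$-quantifier-free $\L_{\rf,\vg,\sigma}\cup\{\ac\}$-formulas $\phi(\x)$ for tuples $\x$ from $A$ (allowing the variables to range over all three sorts). Here is the point where the structure of the language enters: since there are no function symbols with arguments in $\vg$ or $\rf$ and value in $\vf$, every $\vf$-term with parameters in $A$ is already an $\L_{\vf}$-term in $\vf(A)$, and every atomic subformula of $\phi$ is of one of three kinds --- an $\L_{\vf}$-atomic formula on $\vf$-terms, an $\L_{\vg}\cup\{\infty\}$-atomic formula whose arguments are either $\vg$-variables from $A$ or $\val$ applied to $\vf$-terms, or an $\L_{\rf}$-atomic formula (including those involving $\ac$) whose arguments are either $\rf$-variables from $A$ or $\res$/$\ac$ applied to $\vf$-terms. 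Since $f$ is an isomorphism of substructures it commutes with all the term operations, with $\val$, $\res$ and $\ac$, and it respects the $\L_{\vf}$-atomic formulas on the $\vf$-sort; on the $\vg$- and $\rf$-sorts it is in particular a homomorphism of the respective structures, so it sends the values $\val(t(\a)),\res(t(\a)),\ac(t(\a))$ (for $\a$ from $\vf(A)$, $t$ an $\L_{\vf}$-term) to the corresponding elements on the $M'$-side, and it respects all $\L_{\vg}\cup\{\infty\}$- and $\L_{\rf}$-atomic formulas because it is even elementary on those sorts. Hence $f$ preserves every $\vf$-quantifier-free formula, and we are done. (One must be mildly careful that an atomic formula may mix $\vg$-variables with $\val$ of $\vf$-terms, but that causes no difficulty: $f$ agrees with $f_{\vg}$ on the former and with $f_{\vg}\circ\val = \val\circ f_{\vf}$ on the latter.)

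\medskip
I expect the genuine content — and the only place one has to be attentive — to be the bookkeeping in $(1)\Rightarrow(2)$: making precise that in this three-sorted language every $\vf$-quantifier-free formula decomposes into pieces living purely in one sort (with the inter-sort maps $\val,\res,\ac$ appearing only as "outermost" applications to $\vf$-terms), so that an isomorphism of substructures which is elementary on the $\vg$- and $\rf$-sorts is automatically a $\vf$-quantifier-free-elementary map. The compactness step in $(2)\Rightarrow(1)$ is entirely standard. Everything else is formal.
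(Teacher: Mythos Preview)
Your proof is correct and follows exactly the approach the paper indicates (the paper gives no details beyond remarking that the lemma ``is a consequence of compactness, taking into account that there are no function symbols in our language with arguments in $\vg$ or $\rf$ and target sort $\vf$''). The one step that deserves to be spelled out in the body of your $(1)\Rightarrow(2)$ argument, rather than only in the concluding paragraph, is the handling of the $\vg$- and $\rf$-quantifiers: a $\vf$-quantifier-free formula may still quantify over $\vg$ and $\rf$, so preservation of atomic subformulas alone is not enough; what finishes it is that after substituting the $\vf$-parameters one is left with a formula in the two sorts $\vg$ and $\rf$ with no symbols connecting them, hence equivalent to a Boolean combination of pure $\vg$- and pure $\rf$-formulas, to which the assumed \emph{elementarity} (not merely isomorphism) of $f_{\vg}$ and $f_{\rf}$ applies directly.
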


\begin{lemma}\label{L:QE-consequences}
Let $T$ be an $\L_{\rf,\vg,\sigma}\cup\{\ac\}$-theory of $\ac$-valued difference fields. Assume that $T$ 
eliminates $\vf$-quantifiers. Then the 
following holds:

\begin{enumerate}
\item In any model ${\cal K}=(K,\Gamma_K,k_K)\models T$, $k_K=\rf({\cal K})$ is stably embedded, and the induced structure is that of a difference field. Similarly, $\Gamma_K=\vg({\cal K})$ is 
stably embedded and is a pure ordered difference group. Moreover, $\rf$ and $\vg$ are orthogonal, i.e.\ every definable subset of $\rf^m\times\vg^n$ is a finite union of rectangles.
\item Let ${\cal K}=(K,\Gamma_K,k_K)$ and ${\cal L}=(L,\Gamma_L,k_L)$ be models of $T$. 
\begin{enumerate}
\item One has ${\cal K}\equiv{\cal L}$ iff $k_K\equiv k_L$ (as difference fields) and $\Gamma_K\equiv\Gamma_L$ (as ordered difference groups).
\item Assume ${\cal K}\subseteq{\cal L}$. Then ${\cal K}\elex{\cal L}$ iff $k_K\elex k_L$ and $\Gamma_K\elex\Gamma_L$.
\end{enumerate}

\item Let $L/K$ be an immediate extension of valued difference fields, living in a model of $T$. Assume that $\ac(K)\subseteq k_K$, and let $a$ be a tuple from $L$. Then $\qftp(a/K) \vdash \tp(a/K)$.\end{enumerate}
\end{lemma}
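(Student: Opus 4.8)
The plan is to extract all three statements from the elimination of $\vf$-quantifiers, using the basic feature of the language (already noted just before Lemma~\ref{L:Back-and-forth-QE}) that no function symbol has target sort $\vf$ while taking an argument in $\rf$ or $\vg$, and that there is no function symbol between the sorts $\rf$ and $\vg$ at all. The crux of (1) is a normal-form statement: for any $\L$-formula $\phi(\x_{\rf},\x_{\vg},\y)$ with $\x_{\rf},\x_{\vg}$ tuples of variables in the sorts $\rf$ and $\vg$, and any parameters $\b$ from a model $M\models T$, the formula $\phi(\x_{\rf},\x_{\vg},\b)$ is equivalent in $M$ to a Boolean combination of $\L_{\rf}$-formulas in $\x_{\rf}$ with parameters in $\rf(M)$ and $\LODG$-formulas in $\x_{\vg}$ with parameters in $\vg(M)$. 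Indeed, replacing $\phi$ by an equivalent $\vf$-quantifier-free formula, all remaining $\vf$-variables lie among $\y$, so every $\vf$-subterm evaluates, once $\b$ is substituted, to a fixed element of $\vf(M)$; by the absence of the mixed function symbols above, each atomic subformula of the matrix is then either a truth value, a condition on $\x_{\rf}$ over $\rf(M)$, or a condition on $\x_{\vg}$ over $\vg(M)$, and a routine induction --- pushing the residual quantifiers over $\rf$- and $\vg$-variables past atoms of the other sort via disjunctive normal form --- yields the claim. Taking $\x_{\vg}$ (resp.\ $\x_{\rf}$) empty gives stable embeddedness of $\rf$ (resp.\ of $\vg$), together with the fact that the induced structure on $\rf$ is that of a pure difference field and on $\vg$ that of a pure ordered difference group; and putting the Boolean combination in disjunctive normal form exhibits each definable subset of $\rf^m\times\vg^n$ as a finite union of rectangles.

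For (2) I would invoke the back-and-forth criterion of Lemma~\ref{L:Back-and-forth-QE}. For (b), assuming $\mathcal K\subseteq\mathcal L$ with $k_K\elex k_L$ and $\Gamma_K\elex\Gamma_L$, apply the criterion with $M=\mathcal K$, $M'=\mathcal L$, $A=A'=\mathcal K$ and $f=\mathrm{id}$: the hypothesis that $f_{\vg}=\mathrm{id}_{\Gamma_K}$ be $\LODG$-elementary says precisely $\Gamma_K\elex\Gamma_L$, and likewise for $f_{\rf}$, so we conclude $\mathcal K\elex\mathcal L$. For (a) the same argument applies to the minimal substructures $A\subseteq\mathcal K$ and $A'\subseteq\mathcal L$, which are canonically isomorphic in the cases of interest (in particular in equicharacteristic $0$, where each is the trivially valued $\Q$ with $\res=\ac=\mathrm{id}_{\Q}$ and $\sigma$ trivial); since the relevant parameters lie in the prime substructures of $k_K,k_L$ and of $\Gamma_K,\Gamma_L$, this isomorphism is $\L_{\rf}$- and $\LODG$-elementary as soon as $k_K\equiv k_L$ and $\Gamma_K\equiv\Gamma_L$, which gives $\mathcal K\equiv\mathcal L$. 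The converse implications in (a) and (b) are immediate by relativising $\L_{\rf}$-, resp.\ $\LODG$-, formulas to the corresponding sort.

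For (3), work in a monster model $\M$ (so $K\subseteq L\subseteq\M$) and realise $\qftp(a/K)$ by some $a'\in\M$; I must show $\tp(a'/K)=\tp(a/K)$. The point is that, because $L/K$ is immediate and $\ac(K)\subseteq k_K$, every $\res$-, $\val$- and $\ac$-value of an element of $K\langle a\rangle$ already lies in $k_K$, resp.\ $\Gamma_K$ --- for $\ac$: if $x\in K\langle a\rangle\subseteq L$ then $\val(x)\in\Gamma_L=\Gamma_K$, so $\val(x)=\val(y)$ for some $y\in K$, whence $\ac(x)=\res(x/y)\,\ac(y)\in k_K$. Hence, for every $\vf$-term $t$ over $K$, the equalities $\res(t(a))=c$, $\val(t(a))=\gamma$ and $\ac(t(a))=c'$, with the witnessing $c,c'\in k_K$ and $\gamma\in\Gamma_K$, are quantifier-free formulas over $K$, so they hold of $a'$ as well; therefore $a\mapsto a'$ together with the identity on the $\rf$- and $\vg$-sorts defines an isomorphism $f$ of $\L$-substructures $\langle K,a\rangle\cong\langle K,a'\rangle$ of $\M$, with $f_{\rf}=\mathrm{id}_{k_K}$ and $f_{\vg}=\mathrm{id}_{\Gamma_K}$ trivially elementary. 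Lemma~\ref{L:Back-and-forth-QE}, applied with $M=M'=\M$, then shows $f$ is elementary, so $\tp(a'/K)=\tp(a/K)$.

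The only step I expect to require real care is the analysis in (3): checking that immediacy of $L/K$ together with the assumption $\ac(K)\subseteq k_K$ genuinely forces the substructure generated by $a$ (and by $a'$) over $K$ to introduce nothing new in the residue field and value group sorts, so that the back-and-forth map can be taken to be the identity there. The normal-form computation in (1) and the choice of substructures in (2) are routine.
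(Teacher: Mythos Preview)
Your proposal is correct and follows essentially the same approach as the paper: part (1) is derived by the same ``inspection of the language'' (you simply spell out the normal-form argument the paper leaves implicit), and for part (3) your key computation---writing $x\in L$ as $y\cdot(x/y)$ with $y\in K$, $\val(y)=\val(x)$, to get $\ac(x)=\ac(y)\cdot\res(x/y)\in k_K$---is exactly the paper's argument (note that the paper's proof labels this as ``(2)'' by an apparent typo, but the content is the proof of (3)). Your use of Lemma~\ref{L:Back-and-forth-QE} to conclude is precisely what the paper's ``The result follows'' is invoking, and your treatment of (2) via the same lemma fills in a step the paper omits entirely.
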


\begin{proof}
(1) is clear by inspection of the language. 

To prove (2), note that for 
every $b\in L$ there is $c\in K$ and $b'\in\O_L^{\times}$ such that $b=cb'$. But 
then $\ac(b)=\ac(c)\res(b')$, showing that $\ac(b)\in k_L=k_K$. The result follows.
\end{proof}

Before we treat various cases of $\sigma$-henselian valued difference fields, let us mention 
a classical result of Pas. 

\begin{fact}[{\cite{Pas89}}]\label{F:Pas-Thm}
Let $T$ be the theory of henselian $\ac$-valued fields of residue characteristic 0, in the language of Pas $\L_{\rf,\vg}\cup\{\ac\}$. 
Then $T$ eliminates $\vf$-quantifiers.
\end{fact}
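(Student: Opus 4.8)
The plan is to reduce the statement to the back-and-forth criterion of Lemma~\ref{L:Back-and-forth-QE}, or rather to its evident analogue for the language of Pas $\L_{\rf,\vg}\cup\{\ac\}$, which holds by the same compactness argument since there are still no function symbols with target sort $\vf$. Thus, fix $\kappa$-saturated models $M,M'\models T$ for $\kappa$ large and an isomorphism $f=(f_\vf,f_\vg,f_\rf)\colon A\to A'$ between small substructures with $f_\vg$ elementary as a map of ordered abelian groups and $f_\rf$ elementary as a map of fields; I must show $f$ is elementary. For this it is enough to exhibit $f$ inside a back-and-forth system $\mathcal F$ of such partial isomorphisms, i.e.\ to verify the extension property: given $g\in\mathcal F$ and $\alpha\in M$, one can enlarge $g$ within $\mathcal F$ so that $\alpha\in\mathrm{dom}(g)$ (and symmetrically).

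Several of the extension moves are routine. If $\alpha$ lies in the residue sort, use saturation of $M'$ and field-elementarity of $f_\rf$ to realise the image type and adjoin $\alpha$ to $\rf(A)$, leaving the $\vf$- and $\vg$-parts untouched; if $\alpha$ lies in the value-group sort, proceed dually. Before adjoining field elements it is convenient to pass first to a \emph{good substructure}: enlarge $\rf(A)$ to its fraction field, then $\vf(A)$ to its fraction field, then $\vf(A)$ to its henselisation inside $M$ (an immediate extension, available because the residue characteristic is $0$ so that Hensel's Lemma applies and the henselisation adds no new residue, value, or angular-component data), and finally --- using again residue-characteristic-$0$ henselianity together with saturation of $M$ --- keep adjoining residue- and value-transcendental field elements until $\rf(A)=k_{\vf(A)}$ and $\vg(A)=\Gamma_{\vf(A)}$. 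Each such enlargement is seen to stay inside $\mathcal F$. The payoff of henselisation is that a henselian valued field of residue characteristic $0$ is \emph{algebraically maximal} (every finite extension is defectless, hence every pseudo-Cauchy sequence of algebraic type over it already has a pseudo-limit in it), which disposes of the one dangerous case below.

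Now the crux: adjoining $\alpha\in\vf(M)\setminus\vf(A)$ with $\vf(A)$ a good substructure, via the classical trichotomy for valued-field extensions. (i) If $\val(\alpha)\notin\vg(A)$, first perform the value-group move to adjoin $\gamma=\val(\alpha)$, then use surjectivity of $\val$ on $M'$ and surjectivity of the residue map on units to find $\alpha'\in\vf(M')$ with $\val(\alpha')$ the image of $\gamma$ and $\ac(\alpha')$ the prescribed image of $\ac(\alpha)$; the valuation on $\vf(A)(\alpha)$ and on $\vf(A')(\alpha')$, together with the $\ac$-map, is then uniquely determined by $\gamma$ and the angular component of the generator, so the extended map lies in $\mathcal F$. (ii) If $\val(\alpha)\in\vg(A)$ but $\res(\alpha c^{-1})\notin\rf(A)$ for some $c\in\vf(A)$ with $\val(c)=\val(\alpha)$, then since $\vf(A)$ is henselian of residue characteristic $0$ that residue is in fact transcendental over $k_{\vf(A)}=\rf(A)$; do the residue move to adjoin it, realise a matching $\alpha'$ in $M'$ by saturation, and again the valued-field and $\ac$-structure on the extension is uniquely determined. (iii) Otherwise $\vf(A)(\alpha)/\vf(A)$ is \emph{immediate}; by algebraic maximality $\alpha$ is transcendental over $\vf(A)$ and is the pseudo-limit of a pseudo-Cauchy sequence $(a_\rho)$ from $\vf(A)$ of transcendental type. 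One realises in $M'$, by saturation, a pseudo-limit $\alpha'$ of $(f(a_\rho))$; because the extension is immediate, $\rf$ and $\vg$ are unchanged and the $\ac$-values of elements of $\vf(A)(\alpha)$ coincide with $\ac$-values of elements of $\vf(A)$, so compatibility with $f_\rf$, $f_\vg$ and $\ac$ is automatic, and once more the map extends within $\mathcal F$.

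I expect case (iii) to be the main obstacle. It requires knowing (a) that a genuinely new immediate extension of a henselian residue-characteristic-$0$ field is transcendental and uniquely coded by a pseudo-Cauchy sequence of transcendental type --- this is Kaplansky's theory of maximal immediate extensions specialised to the defectless (residue characteristic $0$) setting --- and (b) that the partial type ``being a pseudo-limit of $(f(a_\rho))$'' over $A'$ is finitely satisfiable, hence realised in the saturated $M'$. The conceptual point making (b) harmless is that, the extension being immediate, no new demands are placed on the residue field or value group of $M'$, so the elementarity of $f_\rf$ and $f_\vg$ need not be invoked at this step. The other thing that needs steady bookkeeping is the angular component: it is pinned down in cases (i) and (ii) and simply carried along in (iii), and the reason this goes through smoothly --- rather than forcing one to track a tower of higher residue data --- is precisely that we work in the language of Pas, with an $\ac$-map available.
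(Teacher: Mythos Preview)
The paper does not prove this fact; it is stated with a citation to Pas. Your outline follows the classical back-and-forth argument, and the overall strategy is correct.

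There is, however, a genuine gap in how you handle algebraic data. In case~(ii) you assert that henselianity of $\vf(A)$ in residue characteristic~$0$ forces the new residue $\res(\alpha c^{-1})$ to be \emph{transcendental} over $k_{\vf(A)}$. This is false: take $\vf(A)=\Q((t))$ inside a henselian $M$ with residue field $\Q^{alg}$, and $\alpha=\sqrt{2}$. Henselianity of $\vf(A)$ says nothing about whether $k_{\vf(A)}$ is relatively algebraically closed in $k_M$. A parallel problem arises in case~(i) when $\gamma=\val(\alpha)$ is torsion over $\vg(A)$, say $n\gamma=\val(c)$ with $c\in\vf(A)$: then the value (and hence the angular component) of $\alpha^{n}-cu$, for a unit $u$ with $\res(u)=\ac(\alpha)^{n}/\ac(c)$, is \emph{not} determined by $\gamma$ and $\ac(\alpha)$, so your uniqueness claim fails. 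Your appeal to algebraic maximality only rules out \emph{immediate} algebraic extensions; it does not dispose of residue-algebraic or value-torsion ones.

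The standard repair is an extra preliminary move: pass from $\vf(A)$ to its relative algebraic closure inside $\vf(M)$. Since $\vf(A)$ is henselian, the valuation extends uniquely to every finite extension, and in residue characteristic~$0$ each such extension is defectless, so its residue and value-group increments are finite over those of $\vf(A)$; one matches them on the $M'$-side using elementarity of $f_\rf$ and $f_\vg$ together with saturation, and then lifts via Hensel. After this move $k_{\vf(A)}$ is relatively algebraically closed in $k_M$ and $\Gamma_{\vf(A)}$ is pure in $\Gamma_M$, whereupon your cases~(i)--(iii) go through as written.
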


\subsection{$\sigma$-henselianity in contractive valued difference fields}\label{Sub:Hensel-Contractive}

\begin{definition}
A valued difference field $\mathcal{K}=(K,\Gamma_K,k_K,\sigma)$ is called \emph{contractive }if its value group $\Gamma_K$ is an $\omega$-increasing ordered difference group.
\end{definition}

\smallskip

Let ${\cal K}=(K,\Gamma_K,k_K,\sigma)$ be a contractive valued difference field, with \emph{fixed field}
$F:=\Fix(\sigma):=\{a\in K\mid \sigma(a)=a\}$. Then $\res\upharpoonright_F$ is injective. In particular, $\car(K)=\car(k_K)$. (In case ${\cal K}$ is $\sigma$-henselian in the sense of the definition below, $\res$ induces 
an isomorphism between $F$ and $\Fix(\overline{\sigma})$.)

For $g(X)\in K[X]_{\sigma}$ non-constant, define $\order(g)$ to be the 
minimal $n$ such that $g$ may be written as $g(X)=G(X,\sigma(X),\sigma^2(X),\ldots,\sigma^n(X))$, for some polynomial $G\in K[X_0,\ldots,X_n]$. If $\order(g)=n$, 
we put 
$$\complexity(g):=(n,\deg_{X_n}(G),\deg(G))\in\N^3,$$ where $\deg(G)$ is the total degree of $G$. We say that $g$ has \emph{smaller complexity }than $h$ if 
$\complexity(g)<_{lex}\complexity(h)$.

\smallskip

Recall that for any $G\in K[\overline{X}]$ there are (unique) polynomials $G_{\mu}\in K[\overline{X}]$ 
such that $G(\overline{Y}+\overline{X})=\sum_{\mu}G_{\mu}(\overline{Y})\overline{X}^{\mu}$. Here, 
$\overline{X}=(X_0,\ldots,X_n)$, $\mu=(\mu_0,\ldots,\mu_n)\in\N^{n+1}$ is a multi-index, and 
$\overline{X}^\mu:=\prod_{i=0}^n X_i^{\mu_i}$. 

From this, for $g(X)=G(X,\sigma(X),\sigma^2(X),\ldots,\sigma^n(X))\in K[X]_{\sigma}$ as above, we 
get the following Taylor expansion of difference polynomials (in one variable) 
$$g(a+X)=\sum_{\mu}g_{\mu}(a)X^{\mu},$$
where $g_{\mu}(X)=G_{\mu}(X,\sigma(X),\ldots,\sigma^n(X))$ and $X^\mu :=\prod_{i=0}^n (\sigma^{i}(X))^{\mu_i}$ for every multi-index $\mu=(\mu_0,\ldots,\mu_n)$.

\smallskip

Let us introduce some notation which will be used in the following definition. For $\mu\in\N^{n+1}$ 
and $\gamma\in\Gamma$, we let $|\mu|:=\sum\mu_i=1$, and $\sigma^\mu(\gamma):=\sum_{i=0}^n\mu_i\sigma^{i}(\gamma)$.

\begin{definition}[{\cite[Definition 4.5]{Azg10}}]\label{D:sigma-H}\mbox{}
\begin{itemize}
\item Let $K$ be contractive, $g(X)\in K[X]_{\sigma}$, $\order(g)\leq n$, let $a\in K$.

We say that $(g,a)$ is in \emph{$\sigma$-Hensel configuration} if $g\not\in K$ and if there exists 
$\gamma\in\Gamma_K$ and a multi-index $\mu\in\N^{n+1}$ 
with $|\mu|=1$ such that the following holds:

\begin{enumerate}
\item[(i)] $\val(g(a))=\val(g_{\mu}(a))+\sigma^{\mu}(\gamma)\leq \val(g_{\nu}(a))+\sigma^{\nu}(\gamma)$ 
for all $\nu\in\N^{n+1}$ with $|\nu|=1$;
\item[(ii)] $\val(g_{\nu}(a))+\sigma^{\nu}(\gamma)< \val(g_{\nu+\rho}(a))+\sigma^{\nu+\rho}(\gamma)$ 
for all non-zero $\nu,\rho\in\N^{n+1}$ with $g_{\nu}\neq0$.
\end{enumerate} 
\smallskip
Put $\gamma(g,a):=$ the $\gamma$ from above (this is uniquely determined \cite{Azg10}).
\smallskip

\item A contractive valued difference field $K$ is called \emph{$\sigma$-henselian} if for every $(g,a)$ in $\sigma$-Hensel 
configuration there exists $b\in K$ with $\val(b-a)=\gamma(g,a)$ and $g(b)=0$.
\end{itemize}
\end{definition}

\begin{remark}\label{R:LinDiffCl}
\begin{enumerate}
\item Let $(K,k,\Gamma,\sigma)$ be contractive and $\sigma$-henselian. Then $(k,\bar{\sigma})$ is \emph{linearly difference closed}, i.e.\ for 
all $\alpha_0,\ldots,\alpha_n\in k$ not all 0, the equation $1+\alpha_0X+\alpha_1\sigma(X)+\cdots+\alpha_n\sigma^n(X)$ has 
a solution in $(k,\bar{\sigma})$ \cite[Lemma 4.6]{Azg10}.

\item Conversely, let $(K,k,\Gamma,\sigma)$ be contractive with $(k,\bar{\sigma})$ a linearly closed difference 
field of characteristic 0. Assume that $K$ is a maximally complete valued field. Then $(K,k,\Gamma,\sigma)$ is $\sigma$-henselian by \cite[Corollary 5.6]{Azg10}. 

\item We now isolate a special case of (2). Let $(\Gamma,\sigma)$ be an $\omega$-increasing ordered difference group and $(k,\bar{\sigma})$ a linearly closed difference 
field of characteristic 0. Then, the Hahn field $K:=k((\Gamma))$, 
a valued field which is naturally equipped with an automorphism $\sigma$, namely $\sigma(\sum_{\gamma}a_{\gamma}t^{\gamma}):=\sum_{\gamma}\overline{\sigma}(a_{\gamma})t^{\sigma(\gamma)}$, is a contractive $\sigma$-henselian valued difference field. \end{enumerate}
\end{remark}

\smallskip

For completeness, let us mention another consequence of the results in \cite{Azg10}. (We will make no use of it in our paper.) 

\begin{remark}
A $\sigma$-henselian contractive valued difference field of characteristic 0 is henselian (as a valued field).
\end{remark}

Indeed, combining Remark \ref{R:LinDiffCl}(1\&3) with Fact \ref{F:sigmaAKE}(4) below, one sees 
that every $\sigma$-henselian contractive valued difference field (of characteristic 0) is elementarily equivalent to a valued 
difference field with underlying valued field a Hahn field (which is henselian).

\smallskip

For $q=p^n$ a prime power, consider ${\cal K}_q=(K_q,\Gamma,k,\phi_q)$, where $(K_q,\Gamma,k)\models \ACVF_{p,p}$ and $\phi_q$ 
is the Frobenius automorphism $x\mapsto x^q$.

The following follows in a straight forward way from \cite[Section 8]{Azg10} (see \cite[Proof of Thm 4.3.24]{Gia11} for a proof).

\begin{fact}\label{F:Frob-Hensel}
For every non-principal ultrafilter $\cal{U}$ on the set $Q$ of prime powers, $\prod_{{\cal U}}{\cal K}_q$ is $\sigma$-henselian, i.e.\ the non-standard Frobenius automorphism 
is $\sigma$-henselian.
\end{fact}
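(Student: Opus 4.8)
The claim is that for every non-principal ultrafilter $\mathcal U$ on the set $Q$ of prime powers, the ultraproduct $\prod_{\mathcal U}{\cal K}_q$ is $\sigma$-henselian, where ${\cal K}_q=(K_q,\Gamma,k,\phi_q)$ with $(K_q,\Gamma,k)\models\ACVF_{p,p}$ and $\phi_q(x)=x^q$. The idea is to reduce $\sigma$-henselianity to a first-order (in fact $\forall\exists$) schema in the three-sorted language $\L_{\rf,\vg,\sigma}$ and then to verify, using \cite[Section 8]{Azg10} together with the Frobenius estimates available in each ${\cal K}_q$, that each conjunct of the schema holds in ${\cal K}_q$ for all but finitely many $q$ — whence it transfers to the ultraproduct by {\L}o\'s. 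One must take care that $\phi_q$ is only an \emph{endomorphism} on each $K_q$ (being the Frobenius in positive characteristic it is not onto), so strictly one works with the inversive closure, or equivalently one notes that the ultraproduct of the $(K_q,\phi_q)$ is automatically inversive because surjectivity of $\sigma$ on an ultraproduct follows from a cofinal sequence of $q$-power roots being available once $q\to\infty$; in any case this is how $\VFA$ is set up in \cite{Hru02,Azg10}.

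\textbf{Key steps, in order.} First I would recall from Definition \ref{D:sigma-H} that $\sigma$-henselianity asserts: for every difference polynomial $g(X)\in K[X]_\sigma$ and every $a\in K$ such that $(g,a)$ is in $\sigma$-Hensel configuration, there is $b\in K$ with $g(b)=0$ and $\val(b-a)=\gamma(g,a)$. Fixing the three data $n=\order(g)$, the multidegree of $g$, and the degree, the statement "for all coefficients of $g$ of this shape and all $a$ with $(g,a)$ in $\sigma$-Hensel configuration, a root $b$ as above exists" is expressible by a single first-order $\forall\exists$-sentence $\psi_{n,d,e}$ of $\L_{\rf,\vg,\sigma}$, because the conditions (i)--(ii) defining $\sigma$-Hensel configuration are a Boolean combination of valuation inequalities between finitely many polynomial terms, and $\gamma(g,a)$ is definable from $(g,a)$ by Definition \ref{D:sigma-H}. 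So $\sigma$-henselianity of a valued difference field is equivalent to the conjunction of the sentences $\{\psi_{n,d,e}\}$. Second, by {\L}o\'s's theorem, $\prod_{\mathcal U}{\cal K}_q\models\psi_{n,d,e}$ as soon as $\{q: {\cal K}_q\models\psi_{n,d,e}\}\in\mathcal U$; since $\mathcal U$ is non-principal it suffices that this set is cofinite, i.e. that ${\cal K}_q\models\psi_{n,d,e}$ for all sufficiently large $q$. Third — the heart of the matter — I would invoke the analysis of \cite[Section 8]{Azg10} (as carried out in detail in \cite[Proof of Thm 4.3.24]{Gia11}): there one shows by a Newton-polygon / successive-approximation argument that the only obstruction to finding a root $b$ of a difference polynomial in $\sigma$-Hensel configuration over $K_q$ is the solvability of certain bounded-degree systems over the residue field together with a lower bound on $q$ relative to the complexity $(n,d,e)$ of $g$; concretely, once $q$ exceeds a bound depending only on $(n,d,e)$, the reduction at the relevant "residual" step produces a polynomial equation over $k_q$ of degree at most $e$ which — the residue field being algebraically closed and, via $\phi_q$ acting with $q$ large, the linearly-difference-closedness of Remark \ref{R:LinDiffCl} being available — always has a solution, and one lifts it using maximal completeness / spherical completeness of the Hahn-type models, or rather using that one may pass to an elementary extension without loss (as $\sigma$-henselianity is elementary, it is harmless to replace ${\cal K}_q$ by $\prod_{\mathcal U}{\cal K}_q$ directly and run the argument there). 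This shows each $\psi_{n,d,e}$ holds for large $q$, completing the transfer.

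\textbf{Main obstacle.} The genuinely non-trivial point is Step 3: one must extract from \cite[Section 8]{Azg10} an \emph{effective} bound, depending only on the complexity of the difference polynomial and not on $q$, beyond which the residual obstruction in the $\sigma$-Hensel algorithm vanishes for the Frobenius; equivalently, one must check that the finitely many "exceptional" primes for a given complexity type form a set disjoint from a cofinite subset of $Q$. This is exactly the content attributed to \cite[Section 8]{Azg10} and spelled out in \cite{Gia11}, so in the paper I would simply cite those sources rather than reproduce the Newton-polygon bookkeeping; the remaining steps (reduction to an $\forall\exists$-schema, {\L}o\'s) are routine. One should also double-check the harmless but slightly delicate points that the residue field of $\prod_{\mathcal U}{\cal K}_q$ is algebraically closed with $\bar\sigma$ a model of $\ACFA_0$ (Hrushovski), that its value group is $\omega$-increasing so the ambient field is indeed contractive, and that the ultraproduct is inversive — all standard for this setup.
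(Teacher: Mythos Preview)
Your proposal is correct and aligned with the paper's treatment: the paper does not give a self-contained argument for this fact but simply defers to \cite[Section 8]{Azg10} and \cite[Proof of Thm 4.3.24]{Gia11}, exactly the sources you invoke for your Step~3. Your additional framing --- reducing $\sigma$-henselianity to a first-order $\forall\exists$-schema indexed by the complexity of $g$, then transferring each sentence via \L o\'s --- is the standard and correct way to set this up, and is what is implicit in the paper's phrase ``follows in a straightforward way''.

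One correction and one sharpening. Your concern that $\phi_q$ is ``only an endomorphism'' is misplaced: since $(K_q,\Gamma,k)\models\ACVF_{p,p}$, the field $K_q$ is algebraically closed, so the Frobenius $x\mapsto x^q$ is bijective and each ${\cal K}_q$ is already an inversive difference field; no passage to an inversive closure is needed. As for Step~3, the mechanism is more concrete than your description suggests. For $g$ of a fixed shape, once the characteristic $p$ exceeds $\deg_{\sigma^j(X)}(g)$ for every $j$, evaluating $\sigma$ as $\phi_q$ embeds the relevant fragment of $K_q[X]_\sigma$ injectively into ordinary polynomials $K_q[X]$, and then the Newton approximation of \cite[Lemma 4.4]{Azg10} runs verbatim: the residual step of the Taylor expansion produces a nontrivial polynomial equation over the (algebraically closed) residue field, hence solvable, and one iterates, taking pseudolimits in a maximally complete elementary extension of $K_q$ (harmless since $\phi_q$ is definable in the pure field language). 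So the ``effective bound'' you are looking for is simply $p>\max_j\deg_{\sigma^j(X)}(g)$, which is cofinite in $Q$ --- no deeper estimate is required.
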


\subsection{AKE principle in the contractive case and $\VFA$}\label{Sub:AKE}

Denote by $T_0$ the theory of $\sigma$-henselian contractive valued difference fields of equal characteristic 0 (in the language $\L_{\rf,\vg,\sigma}$), and by 
$T_0^{\ac}$ that of $\sigma$-henselian contractive $\ac$-valued difference fields (in the language $\L_{\rf,\vg,\sigma}\cup\{\ac\}$).

\begin{fact}[Durhan]\label{F:sigmaAKE}\mbox{}
\begin{enumerate}
\item The theory $T_0^{\ac}$ eliminates quantifiers from the field sort $\vf$.
\item In any model ${\cal K}=(K,\Gamma_K,k_K)\models T_0^{\ac}$, $k_K=\rf({\cal K})$ is stably embedded, and the induced structure is that of a difference field. Similarly, $\Gamma_K=\vg({\cal K})$ is 
stably embedded and is a pure ordered $\Z[\sigma]$-module. Moreover, $\rf$ and $\vg$ are orthogonal.
\item Let ${\cal K}=(K,\Gamma_K,k_K)$ and ${\cal L}=(L,\Gamma_L,k_L)$ be models of $T_0^{\ac}$. 
\begin{enumerate}
\item ${\cal K}\equiv{\cal L}$ iff $k_K\equiv k_L$ (as difference fields) and $\Gamma_K\equiv\Gamma_L$ (as ordered $\Z[\sigma]$-modules).
\item Suppose ${\cal K}\subseteq{\cal L}$. Then ${\cal K}\elex{\cal L}$ iff $k_K\elex k_L$ and $\Gamma_K\elex\Gamma_L$.
\end{enumerate}

\item Let ${\cal K}=(K,\Gamma_K,k_K)$ and ${\cal L}=(L,\Gamma_L,k_L)$ be models of $T_0$. 
\begin{enumerate}
\item ${\cal K}\equiv{\cal L}$ if and only if $k_K\equiv k_L$ and $\Gamma_K\equiv\Gamma_L$.
\item Suppose ${\cal K}\subseteq{\cal L}$. Then ${\cal K}\elex{\cal L}$ if and only if $k_K\elex k_L$ and $\Gamma_K\elex\Gamma_L$.
\end{enumerate}
\end{enumerate}
\end{fact}

\begin{proof}
(1) is \cite[Thm 4.5.2]{Azg07}, and (2) and (3) follow from (1) (by Lemma \ref{L:QE-consequences}). (4a) is \cite[Thm 4.5.1]{Azg07}. Finally, to prove 
the non-trivial implication in (4b), taking an elementary extension of the pair $(K,L)$, we may assume that 
both $K$ and $L$ are $\aleph_1$-saturated. In \cite[Proof of Thm 11.6]{Pal10}, it is shown that 
$\O^{\times}_M$ is a pure $\Z[\sigma]$-submodule of $M^{\times}$ for every contractive valued difference field $M$. The saturation assumption implies that $K^{\times}$, $L^{\times}$ and $\Gamma_K$ are all pure-injective $\Z[\sigma]$-modules (see \cite[Section 10.7]{Hod93} for 
facts about pure-injective modules). It follows that we get splittings for the exact sequence 
(of $\Z[\sigma]$-modules)
$1\rightarrow\O^{\times}_K\rightarrow K^{\times}\rightarrow\Gamma_K\rightarrow 0$ and 
similarly for $1\rightarrow\O^{\times}_L\rightarrow L^{\times}\rightarrow\Gamma_L\rightarrow 0$, i.e.\ cross-sections $s_K:\Gamma_K\rightarrow K^{\times}$ 
and $s_L:\Gamma_L\rightarrow L^{\times}$ commuting with $\sigma$. 

Moreover, as $\Gamma_K\elex\Gamma_L$, in particular 
$\Gamma_K$ is a pure submodule of $\Gamma_L$, and so there is a $\Z[\sigma]$-submodule 
$\Delta\leq\Gamma_L$ such that $\Gamma_L=\Gamma_K\oplus\Delta$. Now let 
$s_L'=s_K\oplus s_L\upharpoonright_\Delta:\Gamma_L=\Gamma_K\oplus\Delta\rightarrow L^{\times}$. 
Then $s_L'$ is a cross-section (commuting with $\sigma$) which extends $s_K$.

As any cross-section $s$ gives rise to an $\ac$-map, letting 
$\ac(x)=\res(x\cdot s(\val(x))^{-1})$, this shows that we may expand $K$ and $L$ 
so that the embedding is an embedding of models of $T_0^{\ac}$. We then conclude by part (3a). 
\end{proof}

\begin{lemma}\label{L:ACFALinCl}
Any model of $\ACFA$ is linearly difference closed.
\end{lemma}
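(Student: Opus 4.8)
\textbf{Proof plan for Lemma \ref{L:ACFALinCl}.}

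The statement to prove is that any model $(k,\bar\sigma)$ of $\ACFA$ is linearly difference closed, i.e.\ for all $\alpha_0,\dots,\alpha_n \in k$, not all zero, the equation $1 + \alpha_0 X + \alpha_1\sigma(X) + \cdots + \alpha_n\sigma^n(X) = 0$ has a solution in $k$. The plan is to reduce this to the defining property of $\ACFA$ as the model companion of difference fields: a difference field $(k,\bar\sigma)$ is a model of $\ACFA$ iff $k$ is algebraically closed, $\bar\sigma$ is an automorphism, and every system of difference equations that is consistent in some difference field extension already has a solution in $k$ (equivalently, the geometric axiom scheme of Chatzidakis--Hrushovski holds). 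So it suffices to exhibit, for each nonzero tuple $(\alpha_0,\dots,\alpha_n)$, a difference field extension of $(k,\bar\sigma)$ in which the given linear difference equation has a solution.

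First I would dispose of the degenerate case: if $\alpha_i = 0$ for all $i$ the equation reads $1 = 0$ and has no solution, so the hypothesis "not all zero" is exactly what is needed; assume some $\alpha_j \neq 0$, and let $m$ be largest with $\alpha_m \neq 0$. Then the inhomogeneous linear difference operator $L(X) = \alpha_m \sigma^m(X) + \cdots + \alpha_0 X + 1$ has "order $m$" with invertible leading coefficient. The standard fact about linear difference equations is that over any difference field $F$, such an operator has a nonzero solution in some difference field extension of $F$ — concretely, one builds the extension by adjoining a formal solution: take the difference ring $F[X,\sigma(X),\sigma^2(X),\dots]$ modulo the difference ideal generated by $L(X)$; one checks this quotient is nonzero (the relation imposed is a single equation expressing $\sigma^m(X)$ as an $F$-combination of lower shifts, so the quotient is a polynomial ring $F[X,\sigma(X),\dots,\sigma^{m-1}(X)]$ with $\sigma$ extended by the defining relation, hence a domain), pass to its fraction field, and then take the inversive closure. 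This produces a difference field extension $(F',\sigma')$ of $(k,\bar\sigma)$ containing an element $b$ with $L(b) = 0$.

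Then I would invoke the model-companion property of $\ACFA$: since $(k,\bar\sigma) \models \ACFA$ and $(k,\bar\sigma) \subseteq (F',\sigma')$ is an extension of difference fields, and the solvability of $L(X)=0$ is (after clearing the linear relation) an existential condition over $k$ realized in the extension, $\ACFA$ being the model companion — and in particular existentially closed in the class of difference fields — gives a solution $b_0 \in k$. This completes the argument. The only step requiring any care is the construction of the extension realizing the equation; that is entirely routine difference algebra (the quotient-by-a-difference-ideal construction), and there is no real obstacle. One could also cite this directly: every linear difference equation over a difference field has a solution in some extension, hence in any existentially closed — in particular $\ACFA$ — model.
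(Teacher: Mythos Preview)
Your proposal is correct and follows essentially the same route as the paper: construct a difference field extension in which the linear difference equation has a solution, and then invoke existential closedness of models of $\ACFA$ to pull the solution back. The only cosmetic difference is that the paper first applies a power of $\sigma^{-1}$ to arrange $\alpha_0\neq 0$, so that the explicit extension $k(X_0,\ldots,X_{n-1})$ with $\sigma(X_{n-1}):=-\alpha_n^{-1}(1+\sum_{i<n}\alpha_i X_i)$ already carries an \emph{automorphism}; you instead allow an endomorphism and pass to the inversive closure, which achieves the same end.
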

\begin{proof}
Let $(k,\sigma)$ be a difference field, and let $\lambda(X)=1+\alpha_0X+\alpha_1\sigma(X)+\cdots+\alpha_n\sigma^n(X)$, where all $\alpha_i$ are from $k$ and $\alpha_n\neq0$. 

Applying a suitable power of $\sigma^{-1}$ to the equation, we may assume that $\alpha_0\neq0$. If $n=0$, the statement is trivial. 

Now assume $n>0$. Consider $l:=k(X_0,\ldots,X_{n-1})$, and extend $\sigma$ to $l$, letting 
$\sigma(X_i):=X_{i+1}$ for $i<n-1$ and 
$\sigma(X_{n-1}):=-\frac{1}{\alpha_n}(1+\sum_{i=0}^{n-1}\alpha_iX_i)$. Then $\lambda(X_0)=0$, proving that existentially closed difference fields are linearly difference closed.
\end{proof}

Let $\IncVFA$ be the theory of structures ${\cal K}=(K,k,\Gamma)$ (in the language $\L_{\rf,\vg,\sigma}\cup\{\ac\}$), where $(K,k_K,\Gamma_K)$ is a contractive 
$\ac$-valued difference field of characteristic 0, $(k,\overline{\sigma})$ is a difference field 
containing $k_K$, and $(\Gamma,\sigma_{\Gamma})\models\IncODG$ contains $\Gamma_K$ as a difference subgroup. (I.e. we do not require that the maps $\res$ and 
$\val$ are surjective.) 

Let $\VFA$ be the theory $T_0^{\ac}$, together with axioms expressing that $(\rf,\overline{\sigma})\models\ACFA_0$ and that $(\vg,\sigma)\models\IncDODG$. 

\begin{fact}\label{F:ModComp}
$\VFA$ is the model-companion of $\IncVFA$, in the language of $\ac$-valued difference fields. The same result holds if both $\VFA$ and $\IncVFA$ are restricted to the language of valued difference fields.
\end{fact}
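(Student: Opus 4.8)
\emph{Plan.} I would verify the two defining properties of the model companion: that $\VFA$ and $\IncVFA$ are mutually model-consistent, and that $\VFA$ is model complete. Model completeness is the soft part. Suppose $\mathcal K\subseteq\mathcal L$ are models of $\VFA$; in particular they are models of $T_0^{\ac}$, so by the Ax--Kochen--Ershov statement Fact~\ref{F:sigmaAKE}(3b) it suffices to check $k_K\elex k_L$ and $\Gamma_K\elex\Gamma_L$. But $k_K\subseteq k_L$ are models of $\ACFA_0$, which is model complete, and $\Gamma_K\subseteq\Gamma_L$ are models of $\IncDODG$, which eliminates quantifiers (Fact~\ref{F:ValInc}) and so is model complete; hence both conditions hold automatically and $\mathcal K\elex\mathcal L$. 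For the variant in the language without $\ac$ one argues identically, using Fact~\ref{F:sigmaAKE}(4b) in place of (3b).

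\emph{Mutual model-consistency.} One direction is immediate: if $\mathcal K=(K,k,\Gamma)\models\VFA$ then $k=k_K$ and $\Gamma=\Gamma_K$ (the maps $\res,\val$ are onto in a model of $T_0^{\ac}$), and $\mathcal K$ then plainly satisfies the axioms of $\IncVFA$, since models of $\ACFA_0$ are difference fields and models of $\IncDODG$ are $\omega$-increasing ordered difference groups. For the other direction, let $\mathcal K=(K,k,\Gamma)\models\IncVFA$; I want an embedding into a model of $\VFA$. First I would enlarge the extra sorts: embed the (characteristic $0$) difference field $(k,\overline\sigma)$ into a model $(k^{*},\overline\sigma)\models\ACFA_0$ (possible as $\ACFA_0$ is the model companion of the theory of difference fields of characteristic $0$), which is linearly difference closed by Lemma~\ref{L:ACFALinCl}; and embed $(\Gamma,\sigma)\models\IncODG$ into a model $(\Gamma^{*},\sigma)\models\IncDODG$, possible since $\IncDODG$ is the model completion of $\IncODG$ (Fact~\ref{F:ValInc}). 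Next, by a routine sequence of valued difference field extensions of $K$ of the kind used in Ax--Kochen--Ershov arguments — adjoining elements with prescribed residues (realizing $k^{*}/k$ one generator at a time, over the henselization so that algebraic generators lift) and elements with prescribed value and angular component $1$ (realizing $\Gamma^{*}/\Gamma$ by a partial, $\sigma$-equivariant cross-section) — one obtains an $\ac$-valued difference field $K_1\supseteq K$ of equicharacteristic $0$ with residue field $k^{*}$, value group $\Gamma^{*}$, and $\ac$ extending that of $K$. Finally I would pass to a maximally complete immediate valued difference field extension $L$ of $K_1$: being maximally complete, of equicharacteristic $0$, contractive, and with residue field $k^{*}$ linearly difference closed, $L$ is $\sigma$-henselian by Remark~\ref{R:LinDiffCl}(2). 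Since $\ac$ extends uniquely to the immediate extension $L/K_1$ (cf.\ the proof of Lemma~\ref{L:QE-consequences}), and the residue field and value group of $L$ are $k^{*}\models\ACFA_0$ and $\Gamma^{*}\models\IncDODG$, we have $(L,k^{*},\Gamma^{*})\models\VFA$, with $\mathcal K$ embedded in it. Dropping $\ac$ throughout gives the same statement in the language of valued difference fields.

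\emph{Main obstacle.} The real work is the last embedding step, namely producing the maximally complete immediate valued \emph{difference} field extension $L$. One needs the equicharacteristic-$0$ analogue of Kaplansky's theory of maximal immediate extensions, so that $\sigma$ propagates through the pseudo-Cauchy-sequence construction and the resulting field is again maximally complete with the same (linearly difference closed) residue field and value group; Remark~\ref{R:LinDiffCl}(2) is then precisely what upgrades ``maximally complete $+$ linearly difference closed residue field'' to ``$\sigma$-henselian''. The preliminary extension producing $K_1$ with the prescribed residue field and value group is standard but somewhat tedious; everything else (model completeness, and the fact that $\VFA$-models are $\IncVFA$-models) is formal given Fact~\ref{F:sigmaAKE} and the model completeness of $\ACFA_0$ and $\IncDODG$.
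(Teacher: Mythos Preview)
Your argument for model completeness is exactly the paper's, and your treatment of the language without $\ac$ (enlarge $k$ and $\Gamma$, then pass to a maximally complete immediate extension and invoke Remark~\ref{R:LinDiffCl}(2)) is essentially the paper's as well.

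The one point where your proposal diverges from the paper, and where there is a genuine gap, is the embedding step in the $\ac$-language. You write that one ``adjoins elements with prescribed value and angular component $1$ (realizing $\Gamma^{*}/\Gamma$ by a partial, $\sigma$-equivariant cross-section)'' and thereby obtains an $\ac$-valued difference field $K_1$ with value group $\Gamma^{*}$ and $\ac$ extending the given one. This is precisely the delicate step, and it is not routine: if some new $\gamma$ satisfies a $\Z[\sigma,\sigma^{-1}]$-relation $p(\sigma)\cdot\gamma=\gamma_0\in\Gamma_K$, then the value of $\ac$ on the adjoined element is constrained by $\ac$ on $K$, and one cannot simply set it equal to $1$. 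What you would need is a $\Z[\sigma,\sigma^{-1}]$-module splitting $\Gamma^{*}=\Gamma_K\oplus\Delta$; such a splitting is exactly what the paper obtains in the proof of Fact~\ref{F:sigmaAKE}(4b) via pure-injectivity of \emph{saturated} modules, and it is not available in general.

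The paper sidesteps this issue altogether. It first treats the special case $\Gamma=\Gamma_K$: there the constructions from \cite{AzVa11} and \cite{Azg10} enlarge $K$ without increasing the value group, so $\ac$ extends \emph{uniquely} (cf.\ the proof of Lemma~\ref{L:QE-consequences}), yielding $\mathcal K'\models T_0^{\ac}$ with $k'\models\ACFA_0$. Then, via the Ax--Kochen--Ershov principle, $\mathcal K'\equiv (k'((\Gamma)),k',\Gamma)$ with the standard Hahn $\ac$, and the Hahn field embeds trivially into $(k'((\tilde\Gamma)),k',\tilde\Gamma)$ for any $\tilde\Gamma\supseteq\Gamma$ in $\IncDODG$. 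For the general case one uses the special case on $(K,k,\Gamma_K)$ to land in a sufficiently saturated $\mathcal L\models\VFA$, and then embeds the abstract sort $\Gamma$ into $\Gamma_L$ over $\Gamma_K$. Thus the value group is only enlarged inside a Hahn field, where the $\ac$-map is canonical and the extension problem is trivial.
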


\begin{proof}
For valued difference fields (without $\ac$-map), one may show, using 
the proofs of \cite[2.5 \& 2.6]{AzVa11} and \cite[3.3 -- 3.5]{Azg10}, that every model of $\IncVFA$ embeds into some ${\cal K}=(K,k,\Gamma)\models\IncVFA$ with surjective $\res$ and $\val$ and such that $(k,\overline{\sigma})\models\ACFA_0$ and $(\Gamma,\sigma_\Gamma)\models\IncDODG$. (We 
omit the details.) The maximal immediate extension of 
${\cal K}$ is then $\sigma$-henselian, by Remark \ref{R:LinDiffCl}(2) and Lemma \ref{L:ACFALinCl}, so it is a model of $\VFA$. As $\ACFA_0$ and $\IncDODG$ are model-complete, model-completeness 
of $\VFA$ follows from Fact \ref{F:sigmaAKE}(4b). (See also \cite[Thm 4.3.20]{Gia11}.) 

To finish the proof, it is enough to show that $\VFA$ is a companion of $\IncVFA$ 
(in the language with $\ac$). This means that we need to show that every model of $\IncVFA$ embeds 
into a model of $\VFA$. 

Let ${\cal K}=(K,k,\Gamma)\models\IncVFA$ be given. We first show the special case where $\Gamma=\Gamma_K$. Using only the constructions from 
\cite[Proofs of 2.5 \& 2.6]{AzVa11}, the value group does not increase, and so, similarly to the case without $\ac$, we may embed ${\cal K}$ into some 
${\cal K'}=(K',k',\Gamma)\models T_0$, with $(k',\overline{\sigma})\models\ACFA_0$. Moreover, since the value group is the same, $\ac$ uniquely extends to ${\cal K'}$ making it a model of $T_0^{\ac}$. 
By Fact \ref{F:sigmaAKE}, ${\cal K'}\equiv(k'((\Gamma),k',\Gamma)={\cal L}'$, where on ${\cal L}'$ we take the standard $\ac$-map given by the first non-zero coefficient. 
Now ${\cal L'}$ embeds into a model of $\VFA$, namely into $(k'((\tilde{\Gamma})),k',\tilde{\Gamma})$, where $\Gamma\subseteq\tilde{\Gamma}\models\IncDODG$. 
Since ``admitting an embedding into a model of $\VFA$'' is a first order property, the proof of the special case is finished.

For the general case, let ${\cal K}=(K,k,\Gamma)\models\IncVFA$ be given. By the special case, $(K,k,\Gamma_K)\subseteq{\cal L}=(L,l,\Delta)\models\VFA$. 
We may choose ${\cal L}$ sufficiently saturated. Then $\Gamma$ embeds into $\Delta$ over $\Gamma_K$, and we may conclude.
\end{proof}

With a slightly different notion of $\sigma$-henselianity, Fact \ref{F:ModComp} had been independently obtained by Hrushovski in \cite{Hru02}, where the following consequence  is also mentioned. (See also \cite[Thm 4.3.24]{Gia11}.)

\begin{fact}[Hrushovski]\label{F:nsf} Let $\phi$ be a sentence in the language of $\ac$-valued difference fields. The following are equivalent:
\begin{enumerate}
\item $\VFA\vdash\phi$;
\item ${\cal K}_p\models\phi$ for all large enough primes $p$.
\end{enumerate}
\end{fact}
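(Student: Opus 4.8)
The plan is to use elimination of field quantifiers to reduce the equivalence to a pure statement about the residue difference field, and then to quote Hrushovski's non-standard Frobenius theorem for $\ACFA_0$. Fix a non-principal ultrafilter $\cal U$ on the set of primes and set $\U:=\prod_{\cal U}{\cal K}_p$, realising each ${\cal K}_p=(K_p,\Gamma_p,k_p,\phi_p)$ concretely: take $K_p:=\F_p^a((\Q))$ (the Hahn field over $\F_p^a$ with value group $\Q$), $\phi_p(x):=x^p$ (an automorphism of $K_p$, since $K_p$ is algebraically closed, hence perfect), and as $\ac$-map the leading-coefficient map. Then $\phi_p$ multiplies the valuation by $p$, induces $\Frob_p$ on $k_p=\F_p^a$, and $\ac$ commutes with $\phi_p$ because the Frobenius is additive in characteristic $p$. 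By {\L}o{\'s}'s theorem $\U$ has equal characteristic $0$; its value group $\Gamma_{\U}=\prod_{\cal U}\Gamma_p$ is a non-trivial divisible ordered abelian group on which $\sigma$ acts as the ultraproduct of ``multiplication by $p$'', hence is $\omega$-increasing, so $\Gamma_{\U}\models\IncDODG$; its residue difference field $k_{\U}=\prod_{\cal U}(\F_p^a,\Frob_p)$ is a model of $\ACFA_0$ by \cite{Hru04}; and $\U$ is $\sigma$-henselian by Fact~\ref{F:Frob-Hensel}. Thus $\U$ is a $\sigma$-henselian contractive $\ac$-valued difference field of equicharacteristic $0$, i.e.\ $\U\models T_0^{\ac}$, and in fact $\U\models\VFA$.

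Next I reduce $\phi$ to a residue-field sentence. By Fact~\ref{F:sigmaAKE}(1), $\phi$ is equivalent modulo $T_0^{\ac}$ to a sentence involving only the sorts $\rf$ and $\vg$ (closed $\vf$-terms being integers, whose valuation, residue and angular component are $\emptyset$-definable). Since $\rf$ (a pure difference field) and $\vg$ (a pure ordered difference group) are stably embedded and orthogonal (Fact~\ref{F:sigmaAKE}(2)), this sentence is, modulo $T_0^{\ac}$, of the form $\bigvee_s(\alpha_s(\rf)\wedge\beta_s(\vg))$ with the $\alpha_s$ difference-field sentences and the $\beta_s$ ordered-difference-group sentences. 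Now $\IncDODG$ is complete (it has quantifier elimination by Fact~\ref{F:ValInc}, and its language has no constant but $0$), so each $\beta_s$ is a theorem or an antitheorem of $\IncDODG$; putting $\alpha:=\bigvee\{\alpha_s:\IncDODG\vdash\beta_s\}$, we get $\phi\equiv_{\VFA}\alpha(\rf)$, and likewise $\U\models\phi\iff k_{\U}\models\alpha$ (legitimately, as $\U\models T_0^{\ac}$ and $\Gamma_{\U}\models\IncDODG$). Moreover, every model of $\ACFA_0$ is the residue field of some model of $\VFA$: given $k_0\models\ACFA_0$ and any $\Gamma_0\models\IncDODG$, the Hahn field $k_0((\Gamma_0))$ with the leading-coefficient $\ac$-map is a contractive $\sigma$-henselian $\ac$-valued difference field of equicharacteristic $0$ by Lemma~\ref{L:ACFALinCl} and Remark~\ref{R:LinDiffCl}(3), hence a model of $\VFA$ with residue field $k_0$. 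Consequently $\VFA\vdash\phi\iff\VFA\vdash\alpha(\rf)\iff\ACFA_0\vdash\alpha$.

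Finally, by {\L}o{\'s}'s theorem and the fact that every infinite set of primes lies in some non-principal ultrafilter, ``${\cal K}_p\models\phi$ for all large $p$'' is equivalent to ``$\U\models\phi$ for every non-principal $\cal U$'', hence by the previous paragraph to ``$\prod_{\cal U}(\F_p^a,\Frob_p)\models\alpha$ for every non-principal $\cal U$'', i.e.\ to ``$(\F_p^a,\Frob_p)\models\alpha$ for all large $p$''. So the statement to prove reduces to exactly $\ACFA_0\vdash\alpha\iff(\F_p^a,\Frob_p)\models\alpha$ for all large $p$. The implication $(\Rightarrow)$ uses only that the non-standard Frobenius satisfies $\ACFA_0$ (and gives the implication $(1)\Rightarrow(2)$ of the statement); the converse $(\Leftarrow)$ --- equivalently, that the non-standard Frobenius models are dense among the completions of $\ACFA_0$ --- is the substantive input and the only genuine obstacle, everything else being a routine assembly of the quoted results \cite{Hru04,Hru02}. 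The same argument, with Fact~\ref{F:ModComp} and Fact~\ref{F:sigmaAKE}(4) in place of their $\ac$-versions, applies in the language of valued difference fields without $\ac$.
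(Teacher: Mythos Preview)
Your proof is correct and follows the same strategy as the paper's: reduce via elimination of $\vf$-quantifiers and completeness of $\IncDODG$ to a statement about the residue difference field, then invoke Hrushovski's theorem that the limit theory of $(\F_p^a,\Frob_p)$ equals $\ACFA_0$. You are considerably more explicit than the paper's terse sketch, and you correctly isolate that the direction $(2)\Rightarrow(1)$ needs the non-standard Frobenius ultraproducts to be \emph{dense} among the completions of $\ACFA_0$, not merely to model it --- a point the paper leaves implicit in the phrase ``the limit theory of the residue difference fields coincides with $\ACFA_0$''.
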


\begin{proof}
Every instance of the $\sigma$-Hensel scheme holds in ${\cal K}_p$ for $p\gg0$, by Fact \ref{F:Frob-Hensel}. Moreover, it is easy to see that every axiom of 
ordered $\Q(\sigma)$-vector spaces holds in $(\Gamma_{K_p},\gamma\mapsto p\gamma)$ for $p\gg0$. By Fact \ref{F:sigmaAKE}, it is thus enough to show that the limit 
theory of the residue difference fields coincides with $\ACFA_0$. This is true, by a very deep result of Hrushovski \cite{Hru04}.
\end{proof}

\begin{examples}
\begin{enumerate}
\item Let ${\cal U}$ be a non-principal ultrafilter on the set of prime numbers. Then $\prod_{\cal U}{\cal K}_p\models \VFA$.
\item Let $(k,\overline{\sigma})\models\ACFA_0$ (e.g.\ $k=\C$ and $\overline{\sigma}$ a sufficiently 'generic' automorphism of $\C$), and let $\Gamma$ be a non-trivial ordered 
vector space over $\Q(\sigma)$. Then $K:=k((\Gamma))\models\VFA$.
\end{enumerate}
\end{examples}

\begin{proof}
(1) follows from Fact \ref{F:nsf}, and (2) is a consequence of Lemma \ref{L:ACFALinCl} together with Remark \ref{R:LinDiffCl}(3).
\end{proof}

\subsection{Isometric valued difference fields}
Another important class of valued difference fields is the class of valued fields with an \emph{isometry}, where one requires that the induced automorphism $\sigma_{\Gamma}$ on the value 
group $\Gamma$ is the identity. 
The model theory of $\sigma$-henselian valued fields of residue characteristic 0 with an isometry is well understood, if one assumes in addition that there are \emph{enough constants}, i.e.\ 
that every $\gamma\in\Gamma$ is of the form $\val(a)$ for some $a\in\Fix(\sigma)$. 

By work of Scanlon \cite{Sca00,Sca03}, B\'elair, Macintyre and Scanlon \cite{BeMaSc07} and 
then (in a slightly more general setting) Durhan and van den Dries \cite{AzVa11}, in this context one may eliminate quantifiers from the field sort $\vf$ in the language with angular components and thus get an 
Ax-Kochen-Ershov principle, analogous to Fact \ref{F:sigmaAKE}. (In Fact \ref{F:IsomAKE} below we give a precise statement. For the definition of $\sigma$-henselianity 
in the isometric case, we refer to \cite[Definition 4.4]{AzVa11}.) Moreover, a model-companion exists in the isometric case (see e.g., \cite{BeMaSc07}).

Let $S_0^{ac}$ be the theory of $\sigma$-henselian valued fields with an isometry (having enough constants) in residue characteristic 0, considered in the language $\L_{\rf,\vg,\sigma}\cup\{\ac\}$ of $ac$-valued 
difference fields.

\begin{fact}[\cite{AzVa11}]\label{F:IsomAKE}
\begin{enumerate}
\item The theory $S_0^{\ac}$ eliminates $\vf$-quantifiers.
\item In any model ${\cal K}=(K,\Gamma_K,k_K)\models S_0^{\ac}$, $k_K=\rf({\cal K})$ is stably embedded, and the induced structure is that of a difference field. Similarly, $\Gamma_K=\vg({\cal K})$ is 
stably embedded and is a pure ordered abelian group. Moreover, $\rf$ and $\vg$ are orthogonal.
\item Let ${\cal K}=(K,\Gamma_K,k_K)$ and ${\cal L}=(L,\Gamma_L,k_L)$ be models of $S_0^{\ac}$. 
\begin{enumerate}
\item ${\cal K}\equiv{\cal L}$ if and only if $k_K\equiv k_L$ (as difference fields) and $\Gamma_K\equiv\Gamma_L$ (as ordered abelian groups).
\item Suppose ${\cal K}\subseteq{\cal L}$. Then  ${\cal K}\elex{\cal L}$ if and only if $k_K\elex k_L$ and $\Gamma_K\elex\Gamma_L$.
\end{enumerate}
\end{enumerate}
\end{fact}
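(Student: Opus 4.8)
The plan is to isolate part (1) as the one substantial ingredient and to deduce (2) and (3) from it by a formal argument, exactly mirroring the treatment of Fact~\ref{F:sigmaAKE}. Part (1)---elimination of $\vf$-quantifiers for $S_0^{\ac}$---is the isometric counterpart of Durhan's relative quantifier elimination in the contractive case. I would establish it simply by citing Scanlon \cite{Sca00,Sca03}, B\'elair--Macintyre--Scanlon \cite{BeMaSc07}, and Durhan--van den Dries \cite{AzVa11}: the last of these proves, for a $\sigma$-henselian valued field of residue characteristic $0$ with an isometry and enough constants (i.e.\ every $\gamma\in\Gamma$ equals $\val(a)$ for some $a\in\Fix(\sigma)$), equipped with an angular component map, that every formula is equivalent to one free of field-sort quantifiers. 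This is where all the work lies: the proof proceeds through a Hensel-type successive-approximation construction tailored to the isometric notion of $\sigma$-henselianity (\cite[Definition~4.4]{AzVa11}), the ``enough constants'' hypothesis being used to keep the value group under control along the way. I would not reproduce this argument.

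Granting (1), the theory $T := S_0^{\ac}$ is an $\L_{\rf,\vg,\sigma}\cup\{\ac\}$-theory of $\ac$-valued difference fields that eliminates $\vf$-quantifiers, so Lemma~\ref{L:QE-consequences} applies to it without change. Its part (1) yields that $k_K$ is stably embedded with induced structure that of a difference field, that $\Gamma_K$ is stably embedded and is a pure ordered difference group, and that $\rf$ and $\vg$ are orthogonal; since an isometry induces $\sigma_{\Gamma} = \mathrm{id}$ on the value group, the induced structure on $\vg$ is just that of a pure ordered abelian group, which is precisely statement (2). For (3), parts (2a) and (2b) of Lemma~\ref{L:QE-consequences} are literally the assertions that ${\cal K}\equiv{\cal L}$ (respectively ${\cal K}\elex{\cal L}$, when ${\cal K}\subseteq{\cal L}$) holds if and only if the corresponding relation holds between the residue difference fields and between the value groups.

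The only point requiring a word of care, and it is already dispatched inside the proof of Lemma~\ref{L:QE-consequences}(2), is that writing an arbitrary $b$ as $b = cb'$ with $c\in K$ and $b' \in \O_L^{\times}$ gives $\ac(b) = \ac(c)\res(b') \in k_L = k_K$, so the angular component map adds no new structure and poses no obstruction in the back-and-forth of Lemma~\ref{L:Back-and-forth-QE}, where the hypotheses on $k_K, k_L$ and on $\Gamma_K, \Gamma_L$ supply exactly the required elementarity of $f_{\rf}$ and $f_{\vg}$. Thus the main obstacle is confined entirely to part (1) and is imported from \cite{AzVa11}; the rest is bookkeeping.
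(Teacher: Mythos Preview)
Your proposal is correct and follows exactly the paper's approach: the paper states Fact~\ref{F:IsomAKE} as a cited result from \cite{AzVa11}, and parts (2) and (3) are meant to follow from (1) via Lemma~\ref{L:QE-consequences}, precisely as in the explicit derivation for Fact~\ref{F:sigmaAKE}. Your observation that $\sigma_{\Gamma}=\mathrm{id}$ in the isometric case, so that the induced structure on $\vg$ is that of a pure ordered abelian group, is the one small adaptation needed, and you handle it correctly.

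One minor remark: your final paragraph conflates two different things. The $b=cb'$ decomposition you quote is the proof of Lemma~\ref{L:QE-consequences}(3) (the statement about immediate extensions and quantifier-free types), not of the AKE transfer in part~(2); the latter follows directly from Lemma~\ref{L:Back-and-forth-QE} applied to the inclusion ${\cal K}\subseteq{\cal L}$ (taking $A={\cal K}$, $A'={\cal K}$, $f=\mathrm{id}$), where the hypotheses $k_K\elex k_L$ and $\Gamma_K\elex\Gamma_L$ supply the required elementarity of $f_{\rf}$ and $f_{\vg}$. The angular component causes no trouble there simply because $f$ is an $\L_{\rf,\vg,\sigma}\cup\{\ac\}$-isomorphism by assumption. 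This is a harmless slip and does not affect the validity of your argument.
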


For $p$ a prime number, let $W(\F_p^{alg})$ be the quotient field of the ring of Witt vectors 
with coefficients from $\F_p^{alg}$, with its natural valuation. On the valued field $W(\F_p^{alg})$, there 
is a natural isometry, namely the \emph{Witt-Frobenius} automorphism which we denote 
by $\widetilde{Frob}_p$, sending $x=\sum_n a_n p^n\in W(\F_p^{alg})$ to 
$\sum_n a_n^p p^n$. Letting $\ac(x):=a_{\val(x)}$, we get an $\ac$-valued difference 
field $\mathcal{W}_p=(W(\F_p^{alg}),\Z,\F_p^{alg},\widetilde{Frob}_p)$.

The following example is discussed in \cite[Section 12]{BeMaSc07}.

\begin{example}[Non-standard Witt-Frobenius automorphism]\label{E:Witt-Frob}
Let $\mathcal{U}$ be a non-principal ultrafilter on the set of prime numbers. 
Then $\prod_{\mathcal{U}}\mathcal{W}_p\models S_0^{ac}$. Moreover, 
$\prod_{\mathcal{U}}\mathcal{W}_p\equiv\prod_{\mathcal{U}}(\F_p^{alg}((t)),\sigma_p)$, where 
$\sigma_p$ is the isometry given by $\sum a_n t^n\mapsto \sum a_n^p t^n$.
\end{example}

\begin{remark}\label{R:Mult-Case}
In the setting of so-called \emph{multiplicative} valued difference fields, forming a common generalisation of the contractive and the isometric case, 
Pal established similar Ax-Kochen-Ershov type results (see \cite{Pal10}), even without adding an angular component map and working in 
the appropriate language for the $\RV$ sort, where $\RV=K^{\times}/1+\m$.
\end{remark}

\section{Indiscernible arrays and {$\NTP_2$}} \label{S:NTP2}

In this section we recall some facts about $\NTP_{2}$ and prove some
new lemmas. As usual, we fix a monster model $\M$. We don't distinguish here between finite and infinite tuples unless mentioned explicitly, and $\bar{a}$, $\bar{b}$, ... denote infinite sequences. 

\begin{definition} \label{D:NTP2}
We say that $\varphi\left(x,y\right)$ has $\TP_{2}$ if there are
$\left(a_{ij}\right)_{i,j\in\omega}$ and $k\in\omega$ such that:
\begin{enumerate}
\item $\left\{ \varphi\left(x,a_{ij}\right)\right\} _{j\in\omega}$ is $k$-inconsistent
for every $i\in\omega$.
\item $\left\{ \varphi\left(x,a_{if\left(i\right)}\right)\right\} _{i\in\omega}$
is consistent for every $f:\,\omega\to\omega$.
\end{enumerate}
A theory is called $\NTP_{2}$ if no formula has $\TP_{2}$.
\end{definition}

\begin{fact} \label{F:NIPSimpleIsNTP2}
If a theory $T$ is simple or $\NIP$, then it is $\NTP_{2}$ (see e.g. \cite[Section 2]{Che12}).\end{fact}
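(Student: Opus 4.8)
The statement is the conjunction of two implications, ``simple $\Rightarrow\NTP_2$'' and ``$\NIP\Rightarrow\NTP_2$''. Since a simple theory is one in which no formula has the tree property $\TP$, and an $\NIP$ theory one in which no formula has the independence property $\mathrm{IP}$, it is enough -- arguing by contraposition, one formula at a time -- to show that $\TP_2$ implies $\TP$ and that $\TP_2$ implies $\mathrm{IP}$.

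The first implication I would read off directly from Definition~\ref{D:NTP2}. Let $\varphi(x,y)$ have $\TP_2$, witnessed by an array $(a_{ij})_{i,j\in\omega}$ and $k\in\omega$. Reindex the array as a tree, setting $b_\eta:=a_{(|\eta|-1),\,\eta(|\eta|-1)}$ for $\eta\in\omega^{<\omega}$ of positive length (and $b_{\langle\rangle}$ arbitrary). The immediate successors of a node $\eta$ of length $n$ are exactly the elements of the $n$-th row of the array, so $\{\varphi(x,b_{\eta\frown i})\}_{i\in\omega}$ is $k$-inconsistent; and a branch $\xi\in\omega^{\omega}$ enumerates a path $\{a_{i,\xi(i)}\}_{i\in\omega}$ through the array, so $\{\varphi(x,b_{\xi\upharpoonright n})\}_{n\in\omega}$ is consistent. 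Thus $\varphi(x,y)$ has $\TP$, and hence every $\NTP$ theory -- in particular every simple theory -- is $\NTP_2$. This step is immediate.

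For the second implication, start with $\varphi(x,y)$ having $\TP_2$ via $(a_{ij})$ and $k$, and first normalise: enlarge the column index set by compactness, extract a \emph{mutually indiscernible} array by an iterated Ramsey argument (each row an indiscernible sequence over the union of the remaining rows, and the sequence of rows itself indiscernible), still with $k$-inconsistent rows and consistent paths, and then take $k$ minimal, so that every $(k-1)$-element subset of a single row is consistent. By $k$-inconsistency alone, any realisation of $\varphi(x,a)$ with $a$ in some row satisfies $\varphi(x,\cdot)$ on fewer than $k$ elements of that row; with row-indiscernibility this yields $\models\exists x\,(\varphi(x,a_{n,p})\wedge\neg\varphi(x,a_{n,q}))$ for all $p\neq q$ in a fixed row. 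Now fix two column indices $j_0\neq j_1$, set $\psi(x;y_0,y_1):=\varphi(x,y_0)\wedge\neg\varphi(x,y_1)$, and let $(c_n)_{n\in\omega}$ be the (indiscernible) sequence with $c_n:=(a_{n,j_0},a_{n,j_1})$. Given $S\subseteq\omega$, follow the path $f_S$ with $f_S(n)=j_0$ for $n\in S$ and $f_S(n)=j_1$ otherwise: any realisation satisfies $\varphi(x,a_{n,j_1})$ for $n\notin S$ and so falsifies $\psi(x,c_n)$ there automatically; and -- using mutual indiscernibility and the finiteness of the column-support of a realisation in each row -- one checks that the path can be realised with the extra constraints $\neg\varphi(x,a_{n,j_1})$ for $n\in S$, so that $\psi(x,c_n)$ holds exactly for $n\in S$. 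Hence $\psi$ has $\mathrm{IP}$, and $\TP_2$ implies $\mathrm{IP}$.

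The routine ingredients are the tree-reindexing and the extraction of a mutually indiscernible array. The main obstacle is the final consistency claim in the $\NIP$ case -- that along $f_S$ one may simultaneously forbid the decoy column $j_1$ on all rows indexed by $S$. For $k=2$ this is trivial, since then a path realisation meets each row in exactly its chosen column; for general $k$ it is the technical heart and is where mutual indiscernibility is genuinely used, via a compactness reduction to finitely many rows together with the one-row consistency statements established above (alternatively, one first reduces to the case $k=2$ by grouping columns into blocks of size $k-1$ and replacing $\varphi$ by a conjunction, the same consistency issue reappearing there).
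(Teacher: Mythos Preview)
The paper does not prove this fact; it is stated with a bare reference to \cite[Section~2]{Che12}, so there is no in-paper argument to compare against. Your $\TP_2\Rightarrow\TP$ reduction (reindexing the array as a tree) is correct and standard, so simple $\Rightarrow\NTP_2$ is fine.

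For $\NIP\Rightarrow\NTP_2$ the strategy is right, but the step you label the ``technical heart'' is not proved, and neither hint you give closes it. The one-row statement $\exists x\,(\varphi(x,a_{n,p})\wedge\neg\varphi(x,a_{n,q}))$ produces a \emph{different} witness for each row; it does not explain why a single $x$ realises the path together with $\neg\varphi(x,a_{n,j_1})$ for all $n\in S$ simultaneously. The blocking reduction to $k=2$ requires consistency of paths choosing $k-1$ columns per row, which is the same unproved multi-row claim in disguise---as you yourself concede. What is actually needed is a short induction exploiting mutual indiscernibility: show that $\Sigma_m=\{\varphi(x,a_{i0}):i\in\omega\}\cup\{\neg\varphi(x,a_{i1}):i<m\}$ is consistent for every $m$. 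For the inductive step, if $\Sigma_m\vdash\varphi(x,a_{m1})$ then, since $a_{m0}$ is the only row-$m$ parameter appearing in $\Sigma_m$, indiscernibility of row $m$ over the remaining rows forces $\Sigma_m\vdash\varphi(x,a_{mj})$ for every $j$, contradicting $k$-inconsistency of row $m$. Compactness then gives consistency of $\Sigma_\omega$, and swapping columns $0$ and $1$ in the rows outside $S$ (legitimate by mutual indiscernibility) produces $d_S$ with $\varphi(d_S,a_{i0})\Leftrightarrow i\in S$. Thus $\varphi$ itself has $\mathrm{IP}$; neither the auxiliary formula $\psi$ nor the minimality of $k$ is needed.
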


\begin{fact}[\cite{Che12}]\label{F:1Variable} If $T$ is not $\NTP_{2}$, then already some $\varphi\left(x,y\right)$
with $\left|x\right|=1$ has $\TP_{2}$.\end{fact}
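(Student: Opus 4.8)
The plan is to argue by induction on $\left|x\right|$, cutting one variable off $x$ at each step: if $\varphi(x,y)$ has $\TP_2$ and $\left|x\right|=n\geq2$, write $x=(x_1,x')$ with $\left|x_1\right|=1$, and I will produce a formula $\psi(x',y'')$ with $\left|x'\right|=n-1$ that again has $\TP_2$. Iterating $n-1$ times yields a $\TP_2$ formula whose object variable is a single element. Note first that $\TP_2$ for $\varphi(x,y)$ only ever gets easier when $\left|y\right|$ grows, so enlarging the parameter variable from step to step causes no trouble.

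The first ingredient is the standard fact that $\TP_2$ is witnessed by a \emph{mutually indiscernible} array, and moreover one may demand mutual indiscernibility over any prescribed finite set of parameters. Concretely: fix an array $(a_{ij})_{i,j<\omega}$ and $k$ witnessing $\TP_2$ for $\varphi$ as in Definition \ref{D:NTP2}, and choose $b\models\{\varphi(x,a_{i0}):i<\omega\}$, a realization of the path for the constant function $0$. By Ramsey plus compactness, extract a new array $(a'_{ij})$ that is mutually indiscernible over $b$, based on $(a_{ij})$ over $b$; then each row $\{\varphi(x,a'_{ij}):j\}$ is still $k$-inconsistent (the $\emptyset$-type of the $i$-th row is the $\emptyset$-type of one of the original rows), and $\models\varphi(b,a'_{i0})$ for all $i$ (the $b$-type of $a'_{i0}$ is realized among the original $a_{j0}$, all of which satisfy $\varphi(b,-)$). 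So $(a'_{ij})$ is mutually indiscernible over $b$, the column-$0$ path $\{\varphi(x,a'_{i0}):i\}$ is consistent via $b$, and row $0$, $\{\varphi(x,a'_{0j}):j\}$, is $k$-inconsistent; conversely such an array re-establishes $\TP_2$ for $\varphi$, since mutual indiscernibility makes every row $k$-inconsistent and every path (obtained from the column-$0$ path by changing one coordinate, i.e.\ one row, at a time) consistent.

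Now write $b=(c,d)$ with $\left|c\right|=1$, put $\psi(x',(y,z)):=\varphi(z,x',y)$ with $z$ a single variable, and set $a''_{ij}:=(a'_{ij},c)$. Three checks: (i) $(a''_{ij})$ is mutually indiscernible over $\emptyset$, because each row $((a'_{ij},c))_j$ is indiscernible over the union of the other rows exactly since $(a'_{ij})_j$ is indiscernible over $\{a'_{i'j}:i'\neq i\}\cup\{c\}$ --- this is where I use that the extraction was performed over $b\ni c$; (ii) each row is $k$-inconsistent for $\psi$, since $\{\psi(x',a''_{ij}):j\}=\{\varphi(c,x',a'_{ij}):j\}$ and substituting the fixed value $c$ for $x_1$ leaves an already $k$-inconsistent family $k$-inconsistent; (iii) the column-$0$ path $\{\psi(x',a''_{i0}):i\}=\{\varphi(c,x',a'_{i0}):i\}$ is satisfied by $d$, as $\models\varphi(c,d,a'_{i0})$ for all $i$. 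By the converse noted above, $\psi$ has $\TP_2$, completing the inductive step.

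The routine parts --- that fixing $x_1=c$ preserves $k$-inconsistency, and that $d$ realizes the surviving path --- are immediate. The one point requiring care, and the main obstacle, is the extraction of a mutually indiscernible witnessing array \emph{over the chosen path-realization $b$} while keeping the column-$0$ path consistent and the rows $k$-inconsistent: it is precisely the presence of $c$ in the base set that makes appending $c$ to every array entry compatible with mutual indiscernibility, and hence allows $x_1$ to be transferred from the object side to the parameter side.
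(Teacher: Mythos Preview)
The paper does not prove this fact; it is quoted from \cite{Che12}. Your overall strategy---induction on $|x|$, peeling off one variable by absorbing its realization into the parameters---is indeed the idea behind the proof there, but the execution has a genuine gap.

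The problem is the extraction step. You assert that one can extract an array $(a'_{ij})$ with rows \emph{mutually indiscernible over $b$} while simultaneously keeping $b$ a realizer of the new first column. Your justification, that ``the $b$-type of $a'_{i0}$ is realized among the original $a_{j0}$'', is what one gets from extracting a $b$-indiscernible \emph{sequence of rows}; but mutual indiscernibility over $b$ requires further Ramsey extraction \emph{within} each row, and that moves the first element of each row to some arbitrary $a_{i,g(i)}$, which $b$ need not satisfy.

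In fact the reduction you describe is simply false in general. Take $\varphi(x_1,x',y)=\theta(x_1,y)$ where $\theta$ is a $\TP_2$ formula not mentioning $x'$. Then your $\psi(x',(y,z))=\theta(z,y)$ does not depend on $x'$ at all, so every instance is a sentence; one checks immediately that no array can make the rows $k$-inconsistent while keeping every path consistent. So $\psi$ is $\NTP_2$, and your inductive step fails. Tracing through your argument in this example: if the array really were mutually indiscernible over $c$ \emph{and} $c$ realized the first column, then by indiscernibility $c$ would realize every entry of every row, contradicting the $k$-inconsistency you also claim. Hence no such extraction exists.

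The actual argument in \cite{Che12} proceeds via a dichotomy: either some formula in a single variable already has $\TP_2$ (and we are done), or one uses precisely this hypothesis to carry out the array manipulation---this is exactly the mechanism behind Lemma~\ref{lem: finding indiscernible row} and the Array Extension Lemma~\ref{prop: adding carefully} in the present paper (note that their proofs only invoke $\NTP_2$ for formulas whose object variable is a finite subtuple of the given $\bar b$, so when $\bar b$ is a singleton only $1$-variable $\NTP_2$ is used). Your write-up is missing this case split; once it is inserted, the inductive step goes through, but not by the naive extraction you describe.
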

\begin{definition}
We say that $\left(c_{ij}\right)_{i,j\in\kappa}$ is a \emph{strongly indiscernible array} if $\bar{c}_{i}=(c_{ij})_{j\in\kappa}$ is indiscernible
over $\bar{c}_{\neq i}$ for all $i$ and $\left(\bar{c}_{i}\right)_{i\in\kappa}$
is an indiscernible sequence (of sequences).
\end{definition}
~

We start with some auxiliary results on finding indiscernible sequences and arrays.
\begin{lemma}
\label{lem: auxiliary lemmas on indisc} 
\begin{enumerate}
\item \label{lem_item: consistentcy of sequence implies can move} Let $C$ be a small set, $\bar{a}=\left(a_{i}\right)_{i\in\omega}$
be a $C$-indiscernible sequence, $b$ given, and let $p\left(x,a_{0}\right)=\tp\left(b/a_{0}C\right)$.
Assume that $\bigcup_{i\in\omega}p\left(x,a_{i}\right)$ is consistent.
Then there is some $\bar{a}'$ indiscernible over $bC$ and such that
$\bar{a}'\equiv_{a_{0}C}\bar{a}$.

\item \label{lem_item: ramsey for arrays} Let $C$ be a small set and $\left(a_{\alpha i}\right)_{\alpha<n,i<\omega}$
be an array with $n<\omega$. Then for any finite $\Delta\in L(C)$
and $N<\omega$ we can find $\Delta$-mutually indiscernible sequences
$\left(a_{\alpha,i_{\alpha0}},...,a_{\alpha,i_{\alpha N}}\right)\subset\bar{a}_{\alpha}$,
$i_{\alpha0}<\ldots<i_{\alpha N}\in\omega$, $\alpha<n$.

\item \label{lem_item: almost indisc gives indisc} Assume that we are given $\left(\bar{a}_{i}\right)_{i\in\kappa}$
and a small set $C$ such that $\bar{a}_{i}$ is indiscernible over
$\bar{a}_{<i}\left(a_{j0}\right)_{j>i}C$ for all $i\in\kappa$. Then
there exists an array $\left(\bar{a}_{i}'\right)_{i\in\kappa}$ such
that $\bar{a}_{i}'\equiv_{a_{i0}C}\bar{a}_{i}$ and $\bar{a}'_{i}$
is indiscernible over $\bar{a}'_{\neq i}C$ for all $i$.
\end{enumerate}
\end{lemma}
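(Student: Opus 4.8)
The three parts are of increasing sophistication, and the natural order is (1), (2), (3), with (3) using (1) and (2) as inputs. For part (1), the plan is a standard compactness-plus-Ramsey argument: by the consistency hypothesis, realize $\bigcup_{i\in\omega} p(x,a_i)$ by some $b'$; then $\bar a$ is a $C$-indiscernible sequence with $b' \models p(x,a_i)$ for every $i$, so in particular $b' a_i \equiv_C b' a_0 \equiv_C b a_0$ for each $i$. Now extract from $\bar a$ (working over $b'C$) a sequence $\bar a''$ indiscernible over $b'C$ and finitely satisfiable in $\bar a$; since $\bar a$ is already $C$-indiscernible, $\bar a'' \equiv_C \bar a$, and because any two elements of $\bar a$ have the same type over $a_0 C$ as $b$ does, we can arrange $\bar a'' \equiv_{a_0 C} \bar a$ (re-indexing so the first term is $a_0$, or applying an automorphism fixing $a_0 C$). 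Pulling back along an automorphism sending $b'$ to $b$ gives the desired $\bar a'$ indiscernible over $bC$ with $\bar a' \equiv_{a_0 C}\bar a$. The only point requiring care is making the first coordinate literally $a_0$; this is handled by composing with an automorphism over $C$, which is permissible since $a_0$ realizes the relevant type.

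For part (2), this is just a finitary, multi-row Ramsey statement. I would iterate the ordinary finite Ramsey theorem row by row: fix $\Delta$ and $N$. Color increasing $(N+1)$-tuples from row $0$ by their $\Delta$-type (over $C$ together with the finitely many already-chosen elements of later rows — but since we proceed from the last row to the first, "later" rows are not yet constrained, so in fact we go from row $n-1$ down to row $0$, at each stage thinning the current row to make the chosen $(N+1)$-tuple $\Delta$-indiscernible over $C$ and over the tuples already fixed in rows above it). Finiteness of $\Delta$ and $N$ makes each coloring finite, so finite Ramsey applies and after $n$ steps we obtain the $\Delta$-mutually indiscernible subsequences. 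I expect this to be entirely routine.

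Part (3) is the substantive one, and the plan is: first apply a standard modeling/extraction argument to the sequence of rows $(\bar a_i)_{i\in\kappa}$ to replace it by $(\bar a_i')_{i\in\kappa}$ with the rows now forming a genuinely indiscernible sequence of sequences, while preserving (by transfer of the EM-type) the property that each $\bar a_i'$ is indiscernible over the initial segment of rows below it together with the zero-coordinates above it, and with $\bar a_i' \equiv_{a_{i0}C}\bar a_i$; the point is that "row $i$ is $\Delta$-indiscernible over (lower rows) $\cup\, (a_{j0})_{j>i} \cup C$" is expressible at the level of finite pieces and hence passes to the extracted array. Then, with the rows an indiscernible sequence, I would use part (1) to upgrade, one row at a time (or uniformly, via compactness), the indiscernibility of $\bar a_i'$ from "over $\bar a'_{<i}(a_{j0})_{j>i}C$" to "over $\bar a'_{\neq i}C$": the hypothesis that $\bar a_i'$ is indiscernible over the $a_{j0}$ for $j>i$ provides exactly the consistency needed to invoke (1) to move the full rows $\bar a_j'$ ($j>i$) to be indiscernible over $\bar a_i' \cup (\text{lower rows}) \cup C$, after which a compactness argument assembles these local improvements into a single array with the desired mutual indiscernibility. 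The main obstacle — and the place I would be most careful — is the bookkeeping in this last step: one must invoke (1) in a way that is coherent across all $i$ simultaneously (so that improving the indiscernibility relative to one row does not destroy it relative to another), which is done by a compactness/saturation argument over the union of all the constraints, together with the fact that moving the "upper" rows over a bigger base set is always possible because the extracted array already records their finite-satisfiability over that base.
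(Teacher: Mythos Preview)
Your plan for part~(1) is essentially the paper's own argument (realise the union of types, extract via Ramsey, move by a $C$-automorphism so that the first element is $a_0$, conclude by compactness), so no issue there.

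For part~(2) there is a small but genuine gap. Going from row $n-1$ down to row $0$ and, at step~$\alpha$, making the chosen subsequence of row~$\alpha$ $\Delta$-indiscernible only over $C$ together with the already-fixed rows $>\alpha$ does \emph{not} yield mutual $\Delta$-indiscernibility: you never arrange row~$\alpha$ to be $\Delta$-indiscernible over the rows $<\alpha$, nor the already-fixed rows $>\alpha$ to be indiscernible over the newly chosen row~$\alpha$. The fix (and this is what the paper does) is to choose, in advance, lengths $N_{n-1}\leq N_{n-2}\leq\cdots\leq N_0$ large enough, and at step~$\alpha$ thin row~$\alpha$ so that it is $\Delta$-indiscernible over the already-fixed rows $>\alpha$ \emph{together with the full, not-yet-thinned initial segments $\bar a_{<\alpha}^+$ of length $N_{<\alpha}$}. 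When you later thin the rows $<\alpha$, you only remove parameters, so the indiscernibility of row~$\alpha$ over them is preserved.

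For part~(3) your route diverges from the paper's and runs into real trouble. The paper does not use~(1) at all here, nor does it try to make $(\bar a_i)_i$ an indiscernible sequence of rows. Instead it reduces to finite~$\kappa$ by compactness, and then for each finite $\Delta,N$ applies part~(2) \emph{directly} to the array, obtaining $\Delta$-mutually indiscernible finite subsequences $\bar a_\alpha^+\subset\bar a_\alpha$. The one-sided hypothesis is then used only once, to show (by a short induction on~$\alpha$) that the tuple of first elements $(a_{\alpha,i_{\alpha 0}})_{\alpha<\kappa}$ has the same type over~$C$ as the original first column $(a_{\alpha 0})_{\alpha<\kappa}$. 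A $C$-automorphism sending the former to the latter then produces sequences whose first element is literally $a_{\alpha 0}$, so that $\sigma(\bar a_\alpha^+)\equiv_{a_{\alpha 0}C}(a_{\alpha i})_{i\leq N}$; compactness over $\Delta,N$ finishes.

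By contrast, your first step for~(3) --- extract an indiscernible sequence of rows while maintaining $\bar a_i'\equiv_{a_{i0}C}\bar a_i$ --- is not justified: standard extraction controls only the EM-type over~$C$, and there is no reason the first column $(a_{i0})_i$ can be kept pointwise fixed (which is exactly what $\bar a_i'\equiv_{a_{i0}C}\bar a_i$ forces, since $a_{i0}$ is a coordinate of $\bar a_i$). Your second step then proposes a row-by-row application of~(1) whose coherence across all rows you yourself flag as the main obstacle; it is not clear this can be made to work, and in any case it is far more involved than necessary. The moral is that (3) is a corollary of~(2), not of~(1).
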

\begin{proof}
(\ref{lem_item: consistentcy of sequence implies can move}) By applying an automorphism it is enough to find $b'\equiv_{a_{0}C}b$ such
that $\bar{a}$ is indiscernible over $b'C$. Let $\Delta$ be an
arbitrary finite set of formulas over $C$. Let $b^{+}\models\bigcup_{i\in\omega}p\left(x,a_{i}\right)$.
By Ramsey there is an infinite subsequence $\bar{a}^{+}$ of $\bar{a}$
which is $\Delta$-indiscernible over $b^{+}$. Let $\sigma$ be a
$C$-automorphism sending $\bar{a}^{+}$ to $\bar{a}$. Then $\bar{a}$
is $\Delta$-indiscernible over $\sigma\left(b^{+}\right)$ and $\sigma\left(b^{+}\right)\equiv_{a_{0}C}b$.
We find $b'$ by compactness.

(\ref{lem_item: ramsey for arrays}) By the finitary Ramsey theorem there are natural numbers $\left(N_{\alpha}\right)_{\alpha<n}$
such that for every $\alpha < n$ and every set $A$ of size $\sum_{\beta<\alpha}N_{\alpha}+\left(n-1 -\alpha\right)\times N$,
every sequence of elements of length $N_{\alpha}$ contains a subsequence
of length $N$ which is $\Delta$-indiscernible over $A$.

Let $\bar{a}_{\alpha}^{+}=\left(a_{\alpha i}\right)_{i<N_{\alpha}}$.
By reverse induction and the choice of $N_{\alpha}$ we can find $\bar{a}_{\alpha}'$
such that:
\begin{itemize}
\item $\bar{a}_{\alpha}'$ is a subsequence of $\bar{a}_{\alpha}^{+}$,
\item $\left|\bar{a}_{\alpha}'\right|=N$,
\item $\bar{a}_{\alpha}'$ is $\Delta$-indiscernible over $\bar{a}_{<\alpha}^{+}\bar{a}_{>\alpha}'$.
\end{itemize}
But then $\bar{a}_{0}',\ldots,\bar{a}_{n-1}'$ are as wanted.

(\ref{lem_item: almost indisc gives indisc}) By compactness, it is enough to prove the statement for finite $\kappa$. Let $\Delta\in L\left(C\right)$
finite and $N\in\omega$ be arbitrary. By (\ref{lem_item: ramsey for arrays}) we can find $\Delta$-mutually
indiscernible sequences $\bar{a}_{\alpha}^{+}=\left(a_{\alpha,i_{\alpha0}},...,a_{\alpha,i_{\alpha N}}\right)\subset\bar{a}_{\alpha}$
for $\alpha\in\kappa$. It follows from the assumption that $a_{0,i_{00}}a_{1,i_{10}}\ldots a_{\kappa-1,i_{\left(\kappa-1\right)0}}\equiv_{C}a_{00}a_{10}\ldots a_{\left(\kappa-1\right)0}$. 
Let $\sigma$ be a $C$-automorphism sending the former to the latter.
Then we have:
\begin{itemize}
\item $\sigma\left(\bar{a}_{0}^{+}\right),\ldots,\sigma\left(\bar{a}_{\kappa-1}^{+}\right)$
are mutually $\Delta$-indiscernible,
\item $a_{\alpha,i_{\alpha0}},...,a_{\alpha,i_{\alpha N}}\equiv_{C}a_{\alpha0}\ldots a_{\alpha N}$
by indiscernibility, so $\sigma\left(\bar{a}_{\alpha}^{+}\right)\equiv_{C}\left(a_{\alpha i}\right)_{i\leq N}$,
which together with $\sigma\left(a_{\alpha,i_{\alpha0}}\right)=a_{\alpha0}$
implies that $\sigma\left(\bar{a}_{\alpha}^{+}\right)\equiv_{a_{\alpha0}C}\left(a_{\alpha i}\right)_{i\leq N}$,
for each $\alpha$. 
\end{itemize}
By compactness we find $\bar{a}_{0}',\ldots,\bar{a}_{\kappa-1}'$
as wanted.
\end{proof}

\begin{lemma}
\label{lem: adding carelessly} Let $\left(c_{ij}\right)_{i,j\in\omega}$
be a strongly indiscernible array such that $\left(\bar{c}_{i}\right)_{i\in\omega}$
is indiscernible over $a$, and let $b$ be given. Then we can find $b_{ij}$
such that $\left(b_{ij}c_{ij}\right)_{i,j\in\omega}$ is a strongly indiscernible
array, $bc_{00}\equiv b_{ij}c_{ij}$ for all $i,j$ and $\left(\bar{b}_{i}\bar{c}_{i}\right)_{i\in\omega}$
is indiscernible over $a$.\end{lemma}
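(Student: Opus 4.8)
The plan is to build the $b_{ij}$ in two stages, first arranging indiscernibility along a single row, then along the sequence of rows, each time using Lemma 2.7(1) to move a strongly indiscernible configuration so as to absorb new parameters. First I would work with the row $\bar c_0 = (c_{0j})_{j\in\omega}$, which is an indiscernible sequence over $\bar c_{\neq 0}$; since $bc_{00}$ is a single realisation of $\tp(b/c_{00}\bar c_{\neq 0})$, the type $p(x,c_{00}) = \tp(b/c_{00}\bar c_{\neq 0})$ trivially satisfies the consistency hypothesis of Lemma 2.7(1) with the ``$a_i$'' being the $c_{0j}$ (taking $b^+ = b$ works, since a constant tuple realises every $p(x,c_{0j})$ simultaneously once we identify it correctly — more precisely, one applies Lemma 2.7(1) with $C = \bar c_{\neq 0}a$ after first noting, by mutual indiscernibility, that $p(x,c_{0j})$ over that base is isomorphic to $p(x,c_{00})$). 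This produces $\bar b_0 = (b_{0j})_{j}$ with $b_{0j}c_{0j} \equiv b_{00}c_{00} = bc_{00}$ for all $j$, and $\bar c_0$ indiscernible over $\bar b_0 \bar c_{\neq 0} a$; a compactness/stretching argument then gives that $(b_{0j}c_{0j})_j$ is indiscernible over $\bar c_{\neq 0}a$.

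Next I would handle the vertical direction. Set $d_i := b_{00}c_{00}$ formally, and use indiscernibility of $(\bar c_i)_{i\in\omega}$ over $a$ together with the row-construction applied to the $i$-th row to get, for each $i$, a tuple $\bar b_i$ with the analogous properties; but the genuinely careful way is to invoke Lemma 2.7(1) once more at the level of rows. Namely, $(\bar c_i)_{i\in\omega}$ is an $a$-indiscernible sequence of (infinite) tuples, and $\bar b_0\bar c_0$ realises $\tp(\bar b_0\bar c_0 / \bar c_0 a)$; the consistency of $\bigcup_i q(x,\bar c_i)$ where $q(x,\bar c_0) = \tp(\bar b_0 / \bar c_0 a)$ holds because of what we just built for a single row (the whole array $(b_{ij}c_{ij})$ with constant-along-copying $b$'s witnesses it). So Lemma 2.7(1), applied with the role of $\bar a$ played by $(\bar c_i)_i$ and base $a$, yields $(\bar b_i)_{i\in\omega}$ with $\bar b_i\bar c_i \equiv_{\bar c_0 a} \bar b_0 \bar c_0$ and $(\bar c_i)_i$ indiscernible over $\bigcup_i \bar b_i \cdot a$. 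Transporting by an automorphism so that $\bar b_0\bar c_0$ lands on the original $\bar c_0$ with the $b_{0j}$'s on top, we conclude that $(\bar b_i\bar c_i)_{i\in\omega}$ is an indiscernible sequence over $a$, and — using that each $\bar b_i\bar c_i$ has the same type over $a$ as $\bar b_0\bar c_0$, hence each row $(b_{ij}c_{ij})_j$ is indiscernible over $\bar c_{\neq i}a \supseteq \bar c_{\neq i}$ — that the array is strongly indiscernible. The condition $bc_{00} \equiv b_{ij}c_{ij}$ follows by composing the row-level and column-level type equalities.

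I expect the main obstacle to be keeping the two directions of indiscernibility compatible: after fixing the rows one must not destroy horizontal indiscernibility when arranging the vertical one, and conversely. The clean fix is to do everything by compactness over a finite fragment $\Delta$ and a finite array, exactly as in the proof of Lemma 2.7(3): apply Lemma 2.7(2) to extract $\Delta$-mutually indiscernible finite subsequences of the rows, then transport by a single automorphism fixing the base, and finally take a direct limit. Concretely, I would first establish the finite approximate statement for an arbitrary finite $\Delta\in L(C)$ with $C$ containing $a$ (and implicitly the tail data), then pass to the limit; the only real input is Lemma 2.7(1), used twice, plus Ramsey for arrays (Lemma 2.7(2)) to repair mutual indiscernibility within rows. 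Routine bookkeeping aside, there is no deep point — the statement is a ``parameter-absorption'' lemma of the kind that Section 2 is systematically developing, and the value group / residue field hypotheses of the paper play no role here.
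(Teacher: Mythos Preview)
Your plan has a genuine gap in its use of Lemma~2.7(1). That lemma requires the partial type $\bigcup_{j}p(x,c_{0j})$ to be consistent, where $p(x,c_{00})=\tp(b/c_{00}C)$. You assert this is ``trivially'' satisfied by $b$ itself, but that is false: $b$ realises $p(x,c_{00})$, and there is no reason whatsoever for it to realise $p(x,c_{0j})$ for $j\neq 0$ --- the element $b$ is completely arbitrary, and in general nothing relates $\tp(b/c_{00}C)$ to $\tp(b/c_{0j}C)$. The observation that the types $p(x,c_{0j})$ are pairwise conjugate (by mutual indiscernibility) does not help: conjugacy of the individual types does not yield consistency of their union. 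The same objection applies to your second use of Lemma~2.7(1) at the level of rows. There is also a mismatch of outputs: Lemma~2.7(1) would give you a copy of $\bar c_{0}$ indiscernible over the \emph{single} element $b$, not over a sequence $\bar b_{0}$ as you write.

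The paper's argument avoids this entirely and is much more direct: set $b_{00}=b$, choose any $b_{ij}$ with $b_{ij}c_{ij}\equiv bc_{00}$, then use Ramsey for arrays (Lemma~2.7(2)) together with compactness and an automorphism to make the rows $(\bar b_{i}\bar c_{i})_{i\in\omega}$ mutually indiscernible, and finally ordinary Ramsey, compactness and automorphisms over $a$ to make the sequence of rows indiscernible over $a$. No consistency hypothesis is ever needed; that is precisely why the lemma is called ``adding carelessly''. Your final paragraph in fact sketches this Ramsey-based route, but you still describe Lemma~2.7(1) as the ``only real input, used twice'' --- it is not used at all. Drop the two appeals to 2.7(1) and keep only the Ramsey-for-arrays plus ordinary Ramsey steps; then the argument goes through.
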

\begin{proof}
Set $b_{00}=b$ and let $b_{ij}$ be such that $b_{ij}c_{ij}\equiv b_{00}c_{00}$.
By Lemma \ref{lem: auxiliary lemmas on indisc}(\ref{lem_item: ramsey for arrays}) and compactness, applying an automorphism 
we may assume that $\left(\bar{b}_{i}\bar{c}_{i}\right)_{i\in\omega}$
are mutually indiscernible. By Ramsey, compactness and applying automorphisms over $a$, we may assume in addition that $\left(\bar{b}_{i}\bar{c}_{i}\right)_{i\in\omega}$
is indiscernible over $a$. 
\end{proof}

Given a definable set $D$, by $D_{\indu}$ we mean the full induced structure on it.
The next lemma is a generalisation of a lemma from \cite[Section 1]{Che12}. 
\begin{lemma}
\label{lem: finding indiscernible row} Let an  $\emptyset$-definable set $D$ be
stably embedded and assume that $D_{\indu}$ is $\NTP_{2}$. Let $\bar{b}\subset D$
with $|\bar{b}|\leq\lambda$ be given.

Assume that $\left(\bar{c}_{i}\right)_{i\in\kappa}$ is an array with
mutually indiscernible rows over $C$, and $\bar c_i = (c_{ij})_{j \in \omega}$. If $\kappa\geq\left(\lambda+|T|\right)^{+}$,
then there is $i\in\kappa$ and $\bar{c}'$ such that:
\begin{itemize}
\item $\bar{c}'\equiv_{c_{i0}C}\bar{c}_{i}$
\item $\bar{c}'$ is indiscernible over $C\bar{b}$.
\end{itemize}
\end{lemma}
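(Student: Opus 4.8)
The plan is to prove the contrapositive by manufacturing a $\TP_{2}$-pattern \emph{inside} $D_{\indu}$. So suppose no such $i$ and $\bar{c}'$ exist. The first step: since the rows are mutually indiscernible over $C$, each $\bar{c}_{i}$ is in particular a $C$-indiscernible sequence, so Lemma~\ref{lem: auxiliary lemmas on indisc}(\ref{lem_item: consistentcy of sequence implies can move}) applies to $\bar{c}_{i}$ and $\bar{b}$: if $\bigcup_{j\in\omega}\tp(\bar{b}/c_{i0}C)(x,c_{ij})$ were consistent we would obtain exactly the desired $\bar{c}'$ for that $i$. Hence for \emph{every} $i<\kappa$ this partial type is inconsistent, and by compactness there are a formula $\varphi_{i}(x_{i},y,w_{i})$ over $\emptyset$ with $x_{i}$ a finite subtuple of the variables of $\bar{b}$ (so of sort $D$, as $\bar{b}\subseteq D$), a finite tuple $\bar{d}_{i}$ from $C$, and $k_{i}<\omega$, such that $\M\models\varphi_{i}(\bar{b}_{i},c_{i0},\bar{d}_{i})$ (with $\bar b_i\subseteq\bar b$ the subtuple matching $x_i$) and $\{\varphi_{i}(x_{i},c_{ij},\bar{d}_{i})\}_{j<\omega}$ is $k_{i}$-inconsistent, the $k_i$-inconsistency for all indices $j$ (not merely $j<k_i$) coming from $C$-indiscernibility of $\bar c_i$, noting $\bar d_i\in C$.

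Next I would apply pigeonhole: there are at most $\lambda+|T|$ possibilities for the data $(\varphi_{i},k_{i},x_{i})$, so using $\kappa\geq(\lambda+|T|)^{+}$ I pass to a countable $S\subseteq\kappa$ on which these are constant, say equal to $(\varphi,k,x')$ with common subtuple $\bar{b}'$ of $\bar{b}$; reindex $S$ as $\omega$. Setting $a_{ij}:=c_{ij}\bar{d}_{i}$, the rows $\{\varphi(x',a_{ij})\}_{j}$ are $k$-inconsistent by construction. For the columns I use mutual indiscernibility: for any $f:\omega\to\omega$ one has $(c_{if(i)})_{i}\equiv_{C}(c_{i0})_{i}$ (change coordinates one at a time, each step justified by a row being indiscernible over the others together with $C$), and $\bar{b}'$ realises $\{\varphi(x',a_{i0})\}_{i}$; applying a $C$-automorphism — which fixes every $\bar{d}_{i}\in C$, and fixes $D$ setwise since $D$ is $\emptyset$-definable — yields a realisation of $\{\varphi(x',a_{if(i)})\}_{i}$, and crucially this realisation lies in $D$ because $\bar{b}'\subseteq D$.

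The final, and I expect most delicate, step is to transfer this from $T$ to $D_{\indu}$; this is the only place stable embeddedness enters, and it is essential, since the pattern built so far merely exhibits $\TP_{2}$ for $T$, which we are not assuming fails. Here I invoke stable embeddedness in its uniform-in-parameters form: as $x'$ is of sort $D$, there is an $\L$-formula $\theta(x',z)$ with $z$ of sort $D$ such that for each parameter tuple $(c,d)$ the trace $\{x'\in D:\M\models\varphi(x',c,d)\}$ equals $\{x'\in D:\M\models\theta(x',e)\}$ for some $e\in D$; choose $e_{ij}\in D$ accordingly from $(c_{ij},\bar{d}_{i})$. Inconsistency over $\M$ descends to $D$, so each row $\{\theta(x',e_{ij})\}_{j}$ is $k$-inconsistent, while the column realisations obtained above already lie in $D$, so each $\{\theta(x',e_{if(i)})\}_{i}$ is consistent. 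Thus $\theta$ witnesses $\TP_{2}$ in $D_{\indu}$ (Definition~\ref{D:NTP2}), contradicting the hypothesis. One routine bookkeeping point along the way is that the number of finite subtuples of an at-most-$\lambda$-tuple together with the number of $\L$-formulas is $\leq\lambda+|T|$, which is exactly what makes the stated bound on $\kappa$ the right one.
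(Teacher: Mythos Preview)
Your proof is correct and follows essentially the same strategy as the paper's: assume failure, extract by compactness a witnessing formula and inconsistency bound for each row, pigeonhole, and use stable embeddedness to push the resulting $\TP_2$-pattern into $D_{\indu}$. The only cosmetic difference is the order of two steps: the paper applies stable embeddedness \emph{before} the pigeonhole (obtaining $\psi_i$ for each $i$ and then thinning so that $\psi_i=\psi$), whereas you pigeonhole first on $(\varphi_i,k_i,x_i)$ and then invoke the uniform form of stable embeddedness once to obtain a single $\theta$; both orders work and yield the same cardinal bound $(\lambda+|T|)^+$.
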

\begin{proof}
Let $p_{i}(\bar{x},c_{i0})=\tp(\bar{b}/c_{i0}C)$. 

We claim that $\bigcup_{j\in\omega}p_i(\bar{x},c_{ij})$ is consistent
for some $i\in\kappa$.

Assume not, then by compactness and indiscernibility, for every $i\in\kappa$
we have some $\phi_{i}(x_{i},c_{i0}d_{i})\in p_{i}$ (with finite
$x_{i}\subset\bar{x}$), $d_{i}\in C$, and $k_{i}\in\omega$ such
that $\left\{ \phi_{i}(x_{i},c_{ij}d_{i})\right\} _{j\in\omega}$
is $k_{i}$-inconsistent. As $D$ is stably embedded, for each $i$ there is some $\psi_i(x_i,e_{i0})$
with $e_{i0}\in D$ such that $\psi_i(x_i,e_{i0})\cap D=\phi_i(x_i,c_{i0}d_{i})\cap D$.
As the type of $c_{i0}d_i$ says that there is an element $e_{i0}$ with this property, by the indiscernibility of the rows over $C$ we can find $e_{ij}\in D$ such that 
$\psi_i(x_i,e_{ij})\cap D=\phi_i(x_i,c_{ij}d_{i})\cap D$ for all $i,j$.
As $\kappa$ was chosen large enough, by throwing some rows away we may assume that $\psi_i = \psi$, $x_{i}=x$ and $k_{i}=k$.

But then we have:

\begin{itemize}
\item $\left\{ \psi(x,e_{ij})\land D(x)\right\} _{j\in\omega}$ is $k$-inconsistent
for every $i$ (as $\{ \phi_i(x_i,c_{ij}d_j) \}_{j\in \omega}$ is $k$-inconsistent),
\item $\left\{ \psi(x,e_{if(i)})\land D(x)\right\} _{i\in\kappa}$ is consistent for every $f: \kappa \to \omega$
(it is witnessed by $\bar{b}$ for $f(i)=0$, and follows for an arbitrary $f$ by the mutual indiscernibility of the rows).
\end{itemize}

This is a contradiction to $D_{\indu}$ being $\NTP_{2}$.

So let $i$ be as given by the claim. But then by Lemma \ref{lem: auxiliary lemmas on indisc}(\ref{lem_item: ramsey for arrays})
we can find $\bar{c}'\equiv_{c_{i0}C}\bar{c}_{i}$ such that $\bar{c}'$
is indiscernible over $\bar{b}C$.\end{proof}

\begin{lemma}[The Array Extension Lemma]
\label{prop: adding carefully} Let $D$ be a stably embedded $\emptyset$-definable set and assume that $D_{\indu}$
is $\NTP_{2}$. 

Assume that 
\begin{itemize}
\item $\left(\bar{c}_{i}\right)_{i\in\omega}$ is indiscernible over $a$,
\item $\left(c_{ij}\right)_{i,j\in\omega}$ is a strongly indiscernible array.
\end{itemize}

Let a small $b\subseteq D$ be given. Then we can find $\left(c_{ij}^{*}\right)_{i,j\in\omega}$
and $\left(b_{ij}^{*}\right)_{i,j\in\omega}$ such that:
\begin{enumerate}
\item $\left(\bar{b}_{i}^{*}\bar{c}_{i}^{*}\right)_{i\in\omega}$ is indiscernible
over $a$,
\item $\left(\bar{b}_{i}^{*}\bar{c}_{i}^{*}\right)_{i\in\omega}$ are mutually
indiscernible,
\item $\bar{c}_{i}^{*}\equiv_{c_{i0}}\bar{c}_{i}$ for all $i\in\omega$ (so in particular $c_{i0}^* = c_{i0}$),
\item $abc_{00}\equiv ab_{00}^{*}c_{00}^{*}$.
\end{enumerate}

In particular $\left(b_{ij}^{*}c_{ij}^{*}\right)_{i,j\in\omega}$
is a strongly indiscernible array.\end{lemma}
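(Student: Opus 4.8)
The plan is to isolate the only place where $\NTP_2$ and the stable embeddedness of $D$ are used — Lemma~\ref{lem: finding indiscernible row} — and to feed it an auxiliary array of \emph{copies of the whole array}, then transport the result back by an automorphism over $a$. Set $\lambda=|b|$ and $\kappa=(\lambda+|T|)^+$. First I would fix a convenient encoding: flatten $(c_{ij})_{i,j<\omega}$ row by row into one sequence $\hat C$ of order type $\omega^2$ with least element $c_{00}$. Since $(c_{ij})$ is strongly indiscernible, $\hat C$ is $\emptyset$-indiscernible, and conversely, for any set $X$, indiscernibility of a sequence of order type $\omega^2$ over $X$ yields a strongly indiscernible array over $X$: each $\omega$-block is an \emph{interval} in $\omega^2$, hence indiscernible over $X$ together with the complementary blocks. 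The same interval observation turns indiscernibility of $\hat C$ over $Xb$ into the statement that each row is indiscernible over $b$ together with the remaining rows, and that the sequence of rows is indiscernible over $Xb$.

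Next I would produce a family $(\hat C_\alpha)_{\alpha<\kappa}$ of copies of $\hat C$ which is mutually indiscernible over $a$, with the extra features that each $\hat C_\alpha\equiv_a\hat C$ and that its least element $c^\alpha_{00}$ realizes $\tp(c_{00}/ab)$; the family is obtained from an $ab$-indiscernible sequence of realizations of the consistent type $\tp(\hat C/a)\cup\tp(c_{00}/ab)$, thinned out to mutual indiscernibility over $a$ by the ``Ramsey for arrays'' part, Lemma~\ref{lem: auxiliary lemmas on indisc}(\ref{lem_item: ramsey for arrays}). For this to be coherent one needs $\hat C$ to be indiscernible over $a$; when the given flattening is not, one first replaces the array by a column-equivalent one whose row-by-row flattening is $a$-indiscernible while each row retains its type over its base point $c_{i0}$ (and one arranges the Ramsey thinning to retain the least element of each row). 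Then apply Lemma~\ref{lem: finding indiscernible row} — whose proof goes through verbatim when the rows of the array are indexed by any small linear order with a least element, here $\omega^2$ — to $(\hat C_\alpha)_{\alpha<\kappa}$, with the set $D$, the tuple $b\subseteq D$ of size $\lambda\le\lambda$, and base set $C:=a$. Since $\kappa\ge(\lambda+|T|)^+$ we get $\alpha_0<\kappa$ and $\hat C'$ with $\hat C'\equiv_{c^{\alpha_0}_{00}\,a}\hat C_{\alpha_0}$ and $\hat C'$ indiscernible over $ab$.

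Writing $C'$ for the corresponding array, the interval observation gives: $C'$ is strongly indiscernible, each of its rows is indiscernible over $b$ together with the others, its sequence of rows is indiscernible over $ab$, its $(0,0)$-entry is $c^{\alpha_0}_{00}$, and $C'\equiv_a\hat C_{\alpha_0}\equiv_a\hat C$. Now choose $\rho\in\Aut(\M/a)$ carrying $C'$ to the original array and put $b^*_{ij}:=\rho(b)\subseteq D$ and $c^*_{ij}:=c_{ij}$ for all $i,j$. One checks directly that $(b^*_{ij}c^*_{ij})_{i,j<\omega}$ is strongly indiscernible with its sequence of rows indiscernible over $a$ — attaching a constant tuple $\rho(b)$ to rows that are already indiscernible over $\rho(b)$ plus the other rows, and to a sequence of rows already indiscernible over $a\rho(b)$, preserves all of this — which gives (1) and (2); (3) holds by the way the copies were built (so $c^*_{i0}=c_{i0}$); and (4) holds because $ab^*_{00}c^*_{00}=a\rho(b)\rho(c^{\alpha_0}_{00})=\rho(ab\,c^{\alpha_0}_{00})\equiv ab\,c^{\alpha_0}_{00}\equiv abc_{00}$, the last step using $c^{\alpha_0}_{00}\equiv_{ab}c_{00}$.

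The main obstacle is the middle step. Because Lemma~\ref{lem: finding indiscernible row} only locates \emph{one} good row inside a long array, $b$ must be absorbed into an auxiliary array of copies of the \emph{entire} array rather than into the given rows one at a time; the real work is then the bookkeeping needed to have those copies be simultaneously (i) mutually indiscernible over $a$, (ii) carrying the correct type of the $(0,0)$-coordinate over $ab$ (so that (4) survives the final conjugation), and (iii) flattened so that ``indiscernibility over $ab$'' of the straightened sequence genuinely records the strong indiscernibility over $b$ of the resulting array. Managing the interaction between $a$ — over which only the sequence of rows, not each individual row, is indiscernible — and the new parameter $b$ is what makes the argument delicate.
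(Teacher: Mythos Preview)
Your approach contains a genuine error at the very first step. You claim that if $(c_{ij})_{i,j<\omega}$ is a strongly indiscernible array, then its row-by-row flattening $\hat C$ is an indiscernible sequence of order type $\omega\cdot\omega$. This is false. In the random graph, take $c_{ij}Rc_{i'j'}$ iff $i=i'$: each row is indiscernible over the others and the sequence of rows is indiscernible, so the array is strongly indiscernible, yet $c_{00}c_{01}\not\equiv c_{00}c_{10}$. An indiscernible $\omega\cdot\omega$-sequence is much stronger than a strongly indiscernible array: it forces pairs within a row and pairs across rows to have the same type, erasing the row structure entirely. Your ``interval observation'' is correct only in the direction you do not need.

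The proposed repair --- ``replace the array by a column-equivalent one whose row-by-row flattening is $a$-indiscernible while each row retains its type over its base point $c_{i0}$'' --- is not just unjustified but impossible in general. Suppose $\phi(x,y)$ has $\TP_2$, witnessed (via Lemma~\ref{lem: very indisc witness of TP2}) by a strongly indiscernible array $(c_{ij})$ with $(\bar c_i)$ indiscernible over some $a\models\{\phi(x,c_{i0})\}_i$. If you could find $(\tilde c_{ij})$ with $\hat{\tilde C}$ indiscernible over $a$ and $\bar{\tilde c}_i\equiv_{c_{i0}}\bar c_i$, then in particular $\bar{\tilde c}_0$ would be indiscernible over $a$ with $\tilde c_{00}=c_{00}$, forcing $\phi(a,\tilde c_{0j})$ for all $j$; but $\bar{\tilde c}_0\equiv\bar c_0$ transfers the $k$-inconsistency of $\{\phi(x,c_{0j})\}_j$ to $\{\phi(x,\tilde c_{0j})\}_j$, a contradiction. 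Since the Array Extension Lemma must hold in theories with $\TP_2$ (only $D_{\indu}$ is assumed $\NTP_2$), this replacement cannot be a legitimate step.

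This is exactly why the paper's proof is organised differently: it applies Lemma~\ref{lem: finding indiscernible row} not once to an array of flattened copies, but $n$ times in a reverse induction over finitely many rows, using a carefully stratified index set $I_0+\cdots+I_{n-1}$ of increasing cardinalities; mutual indiscernibility of the resulting rows is then obtained via Lemma~\ref{lem: auxiliary lemmas on indisc}(\ref{lem_item: almost indisc gives indisc}), which only requires the one-sided indiscernibility condition that the inductive construction actually produces. The iterated application is the price paid for not being able to treat the whole array as a single indiscernible sequence.
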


\begin{proof}
By compactness, it suffices to prove the result for finite $b$.

First, by indiscernibility, Ramsey and applying automorphisms over $a$, we
can find $b_{i}$ such that $ab_{i}\bar{c}_{i}\equiv ab\bar{c}_{0}$
and $\left(b_{i}\bar{c}_{i}\right)_{i\in\omega}$ is indiscernible
over $a$.

Again by compactness, indiscernibility and applying automorphisms over $a$,
it is enough to find $\bar{c}_{< n}^{*},\bar{b}_{< n}^{*}$
satisfying (2), (3) and (4$'$) for every $n\in\omega$, where

(4$'$) $abc_{00} \equiv ab_{k0}^*c_{k0}^* $ for all $k<n$.

So fix $n\in\omega$ and let $I=I_{0}+I_{1}+\ldots+I_{n-1}=|T|^{+}+\ldots+\left|T\right|^{+n}$ (where for a cardinal $\kappa$ we let $\kappa^{+n}$ denote the $n$th successor of $\kappa$).
By compactness we may expand our sequence to $\left(b_{i}\bar{c}_{i}\right)_{i\in I}$
with the same Ehrenfeucht-Mostowski type over $a$. 

By reverse induction on $k < n$ we find $i_{k}$, $\bar{c}_{k}^{+}$,
$\bar{b}_{k}^{+}$ such that:
\begin{enumerate}
\item [{(a)}] $i_{k}\in I_{k}$,
\item [{(b)}] $\bar{c}_{k}^{+}\equiv_{c_{i_{k}0}}\bar{c}_{i_{k}}$ (so in particular $c_{k0}^+ = c_{i_k 0}$),
\item [{(c)}] $\bar{c}_{\in I_{<k}}\bar{c}_{k}^{+}\bar{c}_{k+1}^{+} \ldots \bar{c}_{n-1}^{+}\equiv\bar{c}_{\in I_{<k}}\bar{c}_{i_{k}}\bar{c}_{i_{k+1}}\ldots\bar{c}_{i_{n-1}}$,

\item [{(d)}] $b_{k0}^+ = b_{i_k}$ and $\bar{b}_{k}^{+}\subseteq D$,

\item [{(e)}] $\left(b_{kj}^{+}c_{kj}^{+}\right)_{j\in\omega}$ is indiscernible
over $\bar{b}_{>k}^{+}\bar{c}_{>k}^{+}b_{\in I_{<k}}\bar{c}_{\in I_{<k}}$.
\end{enumerate}
In step $k$, let $C=\bar{c}_{>k}^{+}\bar{c}_{\in I_{<k}}$ and $\bar{b}=\bar{b}_{>k}^{+}b_{\in I_{<k}}$.
Then $\bar{b}\subseteq D$, $\left|\bar{b}\right|\leq\left|T\right|^{+k}$
and $\left(\bar{c}_{i}\right)_{i\in I_{k}}$ are mutually indiscernible
over $C$ (by (c) for $k+1$ and the assumption on $\left(\bar{c}_{i}\right)_{i \in I}$). As $I_{k}=\left|T\right|^{+\left(k+1\right)}$,
it follows by Lemma \ref{lem: finding indiscernible row} that there
is some $i_{k}\in I_{k}$ and $\bar{c}_{k}'$ indiscernible over $\bar{b}C$
and such that $\bar{c}_{k}'\equiv_{c_{i_{k}0}C}\bar{c}_{i_{k}}$.
Let $b_{k0}'=b_{i_{k}}$ and $b_{kj}'$ be such that $b_{k0}'c_{k0}'\equiv_{\bar{b}C}b_{kj}'c_{kj}'$.
By Ramsey, compactness and $\bar{b}C$-automorphisms we can find a
$\bar{b}C$-indiscernible sequence $\left(b_{kj}^{+}c_{kj}^{+}\right)_{j\in\omega}$
such that $b_{k0}^{+}c_{k0}^{+}=b_{k0}'c_{k0}'$ and $\bar{c}_{k}^{+}\equiv_{\bar{b}C}\bar{c}_{k}'$.
Now (b) and (c) follow from (c) for $k+1$, and $\bar{c}_{k}^{+}\equiv_{C}\bar{c}_{k}'\equiv_{C}\bar{c}_{i_{k}}$
and $c_{k0}^{+}=c_{i_{k}0}$. Parts (a), (d) and (e) are clearly satisfied by construction.

By Lemma \ref{lem: auxiliary lemmas on indisc}(\ref{lem_item: almost indisc gives indisc}) and (e) we find
sequences $\left(b_{kj}^{++}c_{kj}^{++}\right)_{j\in\omega}$ for
$k<n$ which are mutually indiscernible and such that $\bar{c}_{k}^{++}\bar{b}_{k}^{++}\equiv_{b_{k0}^{+}c_{k0}^{+}}\bar{c}_{k}^{+}\bar{b}_{k}^{+}$.

~

Finally, let $\sigma$ be an $a$-automorphism sending $b_{i_{<n}}\bar{c}_{i_{<n}}$to
$b_{<n}\bar{c}_{<n}$, and let $\bar{b}_{k}^{*}=\sigma\left(\bar{b}_{k}^{++}\right)$
and $\bar{c}_{k}^{*}=\sigma\left(\bar{c}_{k}^{++}\right)$ for $k<n$.

We have:
\begin{itemize}
\item $\left(\bar{b}_{k}^{*}\bar{c}_{k}^{*}\right)_{k<n}$ are mutually
indiscernible (as $\left(\bar{b}_{k}^{++}\bar{c}_{k}^{++}\right)_{k<n}$
are),
\item $\bar{c}_{k}^{*}\equiv_{c_{k0}}\bar{c}_{k}$ (as $\bar{c}_{k}^{++}\equiv_{c_{k0}^{+}}\bar{c}_{k}^{+}$,
$\bar{c}_{k}^{+}\equiv_{c_{i_{k}0}}\bar{c}_{i_{k}}$, $c_{k0}^{+}=c_{i_{k}0}$
and $\sigma$ is an automorphism),
\item $abc_{00}\equiv ab_{k0}^{*}c_{k0}^{*}$ for all $k<n$, as  $b_{k0}^{*}c_{k0}^*=b_{k}c_{k0}$ by the construction and $ab_kc_{k0} \equiv abc_{00}$. 
\end{itemize}
\end{proof}

\begin{lemma}
\label{lem: very indisc witness of TP2} 
In any theory, $\phi(x,y)$ has $\TP_2$ if and only if there is a strongly indiscernible array $(a_{ij})_{i,j \in \omega}$ witnessing it (as in Definition \ref{D:NTP2}) and $c \models \{ \phi(x,a_{i0}) \}_{i \in \omega}$ such that the sequence of rows $(\bar{a}_i)_{i \in \omega}$ is indiscernible over $c$.
\end{lemma}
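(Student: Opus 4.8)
The nontrivial direction is the forward one, so assume $\phi(x,y)$ has $\TP_2$. The plan is to upgrade an arbitrary witnessing array to a strongly indiscernible one by the standard extraction procedure, and then, using the consistency of a single column, to pull the row sequence down to be indiscernible over a realizing element via Lemma~\ref{lem: auxiliary lemmas on indisc}(\ref{lem_item: consistentcy of sequence implies can move}) applied at the level of rows.

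First I would take $(a_{ij})_{i,j\in\omega}$ and $k$ witnessing $\TP_2$ as in Definition~\ref{D:NTP2}. By a routine Ramsey/compactness extraction one finds a strongly indiscernible array $(a'_{ij})_{i,j\in\omega}$ whose rows realize the Ehrenfeucht--Mostowski type (over $\emptyset$) of the rows of the original array, and whose double sequence realizes the EM-type of $(a_{ij})$; $k$-inconsistency of each row and consistency of every $\{\phi(x,a'_{i f(i)})\}_{i\in\omega}$ are preserved because both are expressible by (partial) types in finitely many of the $a_{ij}$, hence transferred along the EM-type. So from now on I may assume $(a_{ij})_{i,j\in\omega}$ itself is strongly indiscernible and still witnesses $\TP_2$ with the same $k$. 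In particular $\{\phi(x,a_{i0})\}_{i\in\omega}$ is consistent (it is the path given by $f\equiv 0$), and by strong indiscernibility, in fact the stronger set $\bigcup_{i\in\omega}\tp_\phi(b/a_{i0})$ — or rather the full type over one column — will be what we feed into the auxiliary lemma.

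Next I would make the sequence of rows indiscernible over a realizing element. Apply Lemma~\ref{lem: auxiliary lemmas on indisc}(\ref{lem_item: consistentcy of sequence implies can move}) with $C=\emptyset$, with the indiscernible sequence being the sequence of rows $\bar a=(\bar a_i)_{i\in\omega}$ (each $\bar a_i=(a_{ij})_{j\in\omega}$ a possibly infinite tuple, which is allowed since the excerpt does not distinguish finite from infinite tuples here), and with $b\models\{\phi(x,a_{i0})\}_{i\in\omega}$; the hypothesis ``$\bigcup_{i}p(x,a_i)$ is consistent'' for $p(x,\bar a_0)=\tp(b/\bar a_0)$ holds because one may choose $b$ already realizing a complete type over the whole array extending $\{\phi(x,a_{i0})\}_i$, so that the translate of $\tp(b/\bar a_0)$ along the row shift is realized by that same $b$. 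The lemma then yields $\bar a'\equiv_{\bar a_0}\bar a$ with $(\bar a'_i)_{i\in\omega}$ indiscernible over $bC=b$. Since $\bar a'_0=\bar a_0$ and the rows form an indiscernible sequence, $(a'_{ij})$ is still a strongly indiscernible array (each row indiscernible over the others: this is inherited since $\bar a'\equiv_{\bar a_0}\bar a$ only moved things by an automorphism fixing the zeroth row, and strong indiscernibility of an array is determined by the EM-type of the row sequence together with each row's type over the union of the others — so one checks it is preserved). Finally $b$ still realizes $\{\phi(x,a'_{i0})\}_{i\in\omega}$ because $a'_{i0}$ has the same type over $b$ as $a_{i0}$, by indiscernibility of the new row sequence over $b$ and $a'_{00}=a_{00}$ together with row-homogeneity. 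Taking $c:=b$ and relabeling $a'_{ij}$ as $a_{ij}$ gives the conclusion. The backward direction is immediate: a strongly indiscernible array witnessing the two clauses of Definition~\ref{D:NTP2} is in particular such an array, so $\phi$ has $\TP_2$ by definition.

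The main obstacle I expect is the bookkeeping showing that strong indiscernibility of the array survives each manipulation: the extraction step and the application of the auxiliary lemma both replace the array by one whose rows have a prescribed EM-type, and one must verify that ``$\bar c_i$ indiscernible over $\bar c_{\ne i}$'' is genuinely encoded in the data that is preserved (namely the type of each row over the union of the others, which is part of what a careful simultaneous extraction can be set up to control). This is the kind of point that is routine but must be stated carefully; everything else — the $k$-inconsistency transfer, the consistency of the distinguished column, the final type-equality $c\models\{\phi(x,a_{i0})\}_i$ — follows formally from indiscernibility once the array is in the right shape.
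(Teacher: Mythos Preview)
Your overall strategy matches the paper's: first extract a strongly indiscernible array by Ramsey and compactness (this is Lemma~\ref{lem: auxiliary lemmas on indisc}(\ref{lem_item: ramsey for arrays}) plus the usual extraction), then make the sequence of rows indiscernible over a realisation of the first column. The difficulty is entirely in your second step.

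There is a genuine gap in your application of Lemma~\ref{lem: auxiliary lemmas on indisc}(\ref{lem_item: consistentcy of sequence implies can move}). You need $\bigcup_{i\in\omega} p(x,\bar a_i)$ consistent, where $p(x,\bar a_0)=\tp(b/\bar a_0)$ is the \emph{full} type of $b$ over the zeroth row. Your justification---that choosing $b$ to realise a complete type over the whole array makes $b$ itself realise each translate $p(x,\bar a_i)$---is circular: $b\models p(x,\bar a_i)$ means precisely $b\bar a_i\equiv b\bar a_0$, i.e.\ that the rows are already indiscernible over $b$, which is the conclusion you are after. Having a complete type over the whole array says nothing about this; $\tp(b/\bar a_0)$ can contain formulas relating $b$ to several $a_{0j}$'s in a way that need not be compatible across rows. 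So the hypothesis of Lemma~\ref{lem: auxiliary lemmas on indisc}(\ref{lem_item: consistentcy of sequence implies can move}) is not verified, and I do not see a cheap way to verify it for any particular choice of $b$.

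The fix---and this is what the paper's ``Ramsey and compactness'' is doing---is to bypass Lemma~\ref{lem: auxiliary lemmas on indisc}(\ref{lem_item: consistentcy of sequence implies can move}) altogether. By compactness, stretch the strongly indiscernible array so that the row sequence $(\bar a_i)_{i\in\lambda}$ is very long, take $c\models\{\phi(x,a_{i0})\}_{i\in\lambda}$, and apply Erd\H{o}s--Rado to the row sequence to find an $\omega$-indexed subsequence of rows indiscernible over $c$. Any subfamily of rows of a strongly indiscernible array is again strongly indiscernible, and $c$ still realises the corresponding first column, so you are done. Your bookkeeping in the final paragraph (that strong indiscernibility is preserved) is then unnecessary, since you only pass to a sub-array rather than moving the array by an automorphism.
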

\begin{proof}
By Lemma \ref{lem: auxiliary lemmas on indisc}(\ref{lem_item: ramsey for arrays}), Ramsey  and compactness.
\end{proof}

\begin{definition}
We say that a (partial) type $p(x)$ over $A$ is \emph{$\NTP_2$-determined} if there is $\Phi \subseteq p$ closed under conjunction, such that $\Phi(x) \vdash p(x)$ and such that for every $\phi(x,a) \in \Phi$, $\phi(x,y)$ is $\NTP_2$. 
\end{definition}

\begin{lemma}
\label{L:NTP2determined} 
Let $(a_{ij})_{i,j \in \omega}$ be a strongly indiscernible array, $\phi(x,y)$ a formula and let $c \models \{ \phi(x,a_{i0}) \}_{i \in \omega}$, moreover assume that the sequence of rows $(\bar{a}_i)_{i \in \omega}$ is indiscernible over $c$. Assume that $p(x,a_{00})=\tp(c/a_{00})$ is $\NTP_2$-determined. Then $\{ \phi(x,a_{0j}) \}_{j \in \omega}$ is consistent.
\end{lemma}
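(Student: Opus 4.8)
The plan is to argue by contradiction. Suppose $\{\phi(x,a_{0j})\}_{j\in\omega}$ is inconsistent. Since $\bar a_0=(a_{0j})_{j\in\omega}$ is an indiscernible sequence, the inconsistency is already witnessed on a finite subset, and hence, again by indiscernibility, $\{\phi(x,a_{0j})\}_{j\in\omega}$ is $k$-inconsistent for some $k\in\omega$; by strong indiscernibility of the array, $\{\phi(x,a_{ij})\}_{j\in\omega}$ is then $k$-inconsistent for every $i$. This is already half of a $\TP_2$-pattern for $\phi$, the other half being $\{\phi(x,a_{i0})\}_{i\in\omega}$, which is consistent since it is realised by $c$. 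The point of the hypothesis is that $\phi$ itself need not be $\NTP_2$, so I would replace it by an $\NTP_2$ formula coming from the generating family $\Phi$.

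Concretely, let $\Phi\subseteq p$ witness that $p(x,a_{00})$ is $\NTP_2$-determined. Since $c\models p$ and $c\models\phi(x,a_{00})$, we have $\phi(x,a_{00})\in p$, so $\Phi(x)\vdash\phi(x,a_{00})$; by compactness, together with the assumption that $\Phi$ is closed under conjunction, there is a single formula $\theta(x,a_{00})\in\Phi$ with $\models\forall x\,(\theta(x,a_{00})\to\phi(x,a_{00}))$, and $\theta(x,y)$ is $\NTP_2$ by the definition of $\NTP_2$-determined. (Up to routine bookkeeping one may pass to the finitely many coordinates of $a_{00}$ actually occurring, which still form a strongly indiscernible array indiscernible over $c$, and pad variables so that $\theta$ and $\phi$ have parameters of the same length.) The use of closure of $\Phi$ under conjunction is essential here, since $\NTP_2$ formulas are not closed under conjunction in general; this is the one place where the precise form of the definition matters.

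It then remains to check that the array $(a_{ij})_{i,j\in\omega}$ witnesses $\TP_2$ for $\theta$. For the rows: in a strongly indiscernible array all entries $a_{ij}$ realise the same type, so $\models\forall x\,(\theta(x,a_{ij})\to\phi(x,a_{ij}))$ for all $i,j$, and $k$-inconsistency of the $\phi$-rows transfers to the $\theta$-rows. For the paths: the constant path is realised by $c$, because $c\models\theta(x,a_{00})$ and $(\bar a_i)_{i\in\omega}$ is indiscernible over $c$, so $a_{i0}\equiv_c a_{00}$ and hence $c\models\theta(x,a_{i0})$ for every $i$; for an arbitrary $f\colon\omega\to\omega$ I would check $(a_{if(i)})_{i\in\omega}\equiv(a_{i0})_{i\in\omega}$ by modifying one entry at a time, each step using that the $i$-th row $\bar a_i$ is indiscernible over $\bar a_{\neq i}$ (so $a_{i,f(i)}\equiv_{\bar a_{\neq i}}a_{i,0}$) and that the other coordinates lie in $\bar a_{\neq i}$. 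Applying the resulting automorphism to $c$ shows $\{\theta(x,a_{if(i)})\}_{i\in\omega}$ is consistent. Thus $\theta(x,y)$ is an $\NTP_2$ formula with $\TP_2$, a contradiction, so $\{\phi(x,a_{0j})\}_{j\in\omega}$ is consistent.

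The genuinely delicate part --- the ``main obstacle'' --- is not the array combinatorics, which is entirely in the spirit of Lemma~\ref{lem: auxiliary lemmas on indisc} and the surrounding material in Section~\ref{S:NTP2}, but rather the careful extraction of the $\NTP_2$ formula $\theta$: one must make sure that, after going down to a single conjunction inside $\Phi$ and matching up parameters, the formula one ends up with is still governed by the $\NTP_2$ hypothesis and still implies $\phi$ uniformly along the whole array.
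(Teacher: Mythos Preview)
Your argument is correct and follows essentially the same route as the paper's proof: extract a single $\NTP_2$ formula $\theta(x,a_{00})\in\Phi$ implying $\phi(x,a_{00})$ by compactness and closure under conjunction, propagate the implication $\theta(x,a_{ij})\vdash\phi(x,a_{ij})$ to all entries via strong indiscernibility, observe that $c$ realises $\{\theta(x,a_{i0})\}_{i\in\omega}$ because the rows are indiscernible over $c$, and then invoke $\NTP_2$ of $\theta$. The only difference is presentational: the paper argues directly (some row $\{\psi(x,a_{ij})\}_{j\in\omega}$ must be consistent, hence so is the $0$th by strong indiscernibility, hence so is $\{\phi(x,a_{0j})\}_{j\in\omega}$), while you wrap the same deduction in a contradiction and spell out the verification that all paths of the array are consistent.
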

\begin{proof}
By the choice of $c$ we have $\phi(x,a_{00}) \in p(x,a_{00})$, then by compactness (and as $\Phi$ is closed under conjunctions) there is some $\psi(x,a_{00}) \in \Phi(x)$ such that $\psi(x,a_{00}) \vdash \phi(x,a_{00})$. By strong indiscernibility it follows that $\psi(x,a_{ij}) \vdash \phi(x,a_{ij})$ for all $i,j \in \omega$. Note also that  $c \models \bigcup_{i \in \omega} \{ p(x,a_{i0})\}$, so in particular $c \models \{ \psi(x,a_{i0}) \}_{i\in \omega}$. As $\psi(x,z)$ is $\NTP_2$, it follows that for some $i \in \omega$ the set $\{ \psi(x,a_{ij}) \}_{j \in \omega}$ is consistent, so by strong indiscernibility the set $\{ \psi(x,a_{0j}) \}_{j \in \omega}$ is consistent. But this implies that $ \{ \phi(x,a_{0j}) \}_{j \in \omega} $ is consistent.
\end{proof}

\section{Preservation of $\NTP_2$}\label{S:Main}
In this section we prove the main results of the paper, concerning the preservation of $\NTP_2$ 
in various $\sigma$-henselian valued difference fields. We first prove a general preservation 
result and then apply this in various contexts.

\subsection{A general preservation result} \label{sec: general theorem}

\begin{theorem}\label{T:Main}
Let ${\cal K}=(K,k,\Gamma)$ be an $\ac$-valued difference fields of residue characteristic 0. Assume that $T=\mathrm{Th}({\cal K})$ eliminates $\vf$-quantifiers, and that both the residue field $k$ (as a difference field) and the value group $\Gamma$ (as an ordered difference group) are $\NTP_2$. 
Then, ${\cal K}$ is $\NTP_2$.
\end{theorem}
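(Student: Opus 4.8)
The plan is to show that no formula $\phi(x,y)$ with $|x|=1$ has $\TP_2$; by Fact \ref{F:1Variable} this suffices. The variable $x$ ranges over one of the three sorts, but the residue field and value group sorts are handled immediately since they are stably embedded with $\NTP_2$ induced structure (by Lemma \ref{L:QE-consequences}(1) and the hypothesis), so the real content is the case where $x$ is a field variable. Suppose for contradiction that $\phi(x,y)$ has $\TP_2$ with $|x|=1$. By Lemma \ref{lem: very indisc witness of TP2} we may fix a strongly indiscernible array $(a_{ij})_{i,j\in\omega}$ witnessing $\TP_2$ (with rows $k$-inconsistent, columns-via-functions consistent) together with $c\in K$ realizing $\{\phi(x,a_{i0})\}_{i\in\omega}$ such that the sequence of rows $(\bar a_i)_{i\in\omega}$ is indiscernible over $c$. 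The strategy is to produce a contradiction by showing that $\{\phi(x,a_{0j})\}_{j\in\omega}$ is in fact consistent, which via Lemma \ref{L:NTP2determined} reduces to showing that $\tp(c/a_{00})$ is $\NTP_2$-determined --- i.e., generated by $\NTP_2$ formulas.

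To get there I would use the Array Extension Lemma (Lemma \ref{prop: adding carefully}) to absorb the relevant residue-field and value-group data into the array while keeping everything strongly indiscernible. Concretely: let $A$ be the substructure generated by $c$ together with a row $\bar a_i$; the key point from field-quantifier elimination (Lemma \ref{L:Back-and-forth-QE}) is that $\tp(c/a_{00})$ is controlled by the quantifier-free field data together with the full $\L_{\rf}$-type of the residue parameters and the $\LODG$-type of the value-group parameters that occur. I want to apply Lemma \ref{prop: adding carefully} twice, with $D=\rf$ and then $D=\vg$ (both stably embedded with $\NTP_2$ induced structure), to attach to each $a_{ij}$ the finitely many residue-field elements $\res(t)$ and value-group elements $\val(t)$ for $t$ ranging over the relevant $\sigma$-polynomial combinations of $c$ and $a_{ij}$, in such a way that the enlarged array $(b_{ij}c_{ij}\ldots)$ is still strongly indiscernible and still witnesses $\TP_2$ for the corresponding formula, with $c$ still realizing the first column and the rows indiscernible over $c$. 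After this enrichment, the residue-field and value-group "coordinates" of the configuration are genuinely part of the array, hence their behavior is governed by the $\NTP_2$ induced structures on $\rf$ and $\vg$; the only remaining freedom is in the immediate-extension part, where by Lemma \ref{L:QE-consequences}(3) the type $\qftp$ implies $\tp$, and where (as the introduction indicates) the extension is "controlled by $\NIP$ formulas" --- the algebraic closure / divisible-hull-and-completion behavior of a single field element over a fixed base, which is $\NIP$ hence $\NTP_2$. This shows $\tp(c/a_{00})$ is $\NTP_2$-determined.

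Having established that $\tp(c/a_{00})$ is $\NTP_2$-determined, Lemma \ref{L:NTP2determined} gives that $\{\phi(x,a_{0j})\}_{j\in\omega}$ is consistent, contradicting the $k$-inconsistency of the first row. Hence $\phi$ does not have $\TP_2$, and ${\cal K}$ is $\NTP_2$.

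The main obstacle is the second paragraph: making precise, and carrying out, the reduction of $\tp(c/a_{00})$ to $\NTP_2$-determined type. Two things have to be arranged simultaneously and they interact delicately. First, the Array Extension Lemma lets one \emph{add} $\rf$- or $\vg$-parameters to an array, but one must add exactly the right ones --- enough that, together with the quantifier-free field data, they pin down $\tp(c/a_{00})$, yet in a finite/small enough package that Lemma \ref{prop: adding carefully} applies and the enriched array still witnesses $\TP_2$ of a single formula. Tracking which $\sigma$-polynomial combinations of $c$ over a row are needed, and checking that the back-and-forth of Lemma \ref{L:Back-and-forth-QE} really closes up, is the technical heart. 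Second, one must verify that after pulling out the stably-embedded coordinates, what is left genuinely reduces to an immediate extension governed by $\NIP$ formulas (so that the hypotheses of Lemma \ref{L:QE-consequences}(3) and an $\NIP$-ness claim can be invoked) --- this is where the residue-characteristic-$0$ assumption and the structure theory of immediate extensions of valued difference fields get used, and where one has to be careful that "$\qftp \vdash \tp$" is being applied over a base that includes all the data that was absorbed into the array.
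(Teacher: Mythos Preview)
Your overall strategy is exactly the paper's: reduce to a field singleton, take a strongly indiscernible witness array with $c$ realizing the first column, enrich the array using Lemma \ref{prop: adding carefully} on the stably embedded $\NTP_2$ sorts $\rf$ and $\vg$, arrange that what remains is an immediate extension so that $\qftp\vdash\tp$ (Lemma \ref{L:QE-consequences}), invoke the fact that quantifier-free formulas are $\NIP$ (Lemma \ref{L: QFisNIP}), and finish with Lemma \ref{L:NTP2determined}. So the architecture is right.

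The one genuine inaccuracy is the phrase ``apply Lemma \ref{prop: adding carefully} \emph{twice}''. Two applications do not suffice, and this is precisely the point your final paragraph worries about without resolving. The paper interleaves two kinds of steps and iterates them $\omega$ times: \emph{careless} extensions via Lemma \ref{lem: adding carelessly}, which add field-sort elements so that the enlarged $a_{00}^{\alpha}$ enumerates an honest $\ac$-valued difference subfield $K^{\alpha}$ with $k_{K^{\alpha}}=\rf(a_{00}^{\alpha})$ and $\Gamma_{K^{\alpha}}=\vg(a_{00}^{\alpha})$; and \emph{careful} extensions via Lemma \ref{prop: adding carefully}, which absorb $\ac(K^{\alpha}\langle c\rangle)$ and $\Gamma_{K^{\alpha}\langle c\rangle}$ into the $\rf$- and $\vg$-parts of the array. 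Each careful step enlarges the $\rf$/$\vg$ part of $a_{00}$, which then forces a new careless step to lift those elements back into the field sort; that enlarges $K$, hence $K\langle c\rangle$, hence produces new $\ac$- and $\val$-data; and so on. Only at the limit $\omega$ does the process close up so that $K^{\omega}\langle c\rangle/K^{\omega}$ is immediate and $\ac(K^{\omega})\subseteq k_{K^{\omega}}$, which is exactly the hypothesis needed for Lemma \ref{L:QE-consequences}(3). The compactness argument that the limit array still satisfies the strong-indiscernibility and column-preservation conditions is routine but has to be stated.

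So: keep your outline, but replace ``twice'' by an $\omega$-long alternation of Lemmas \ref{lem: adding carelessly} and \ref{prop: adding carefully}, and check that the notion of ``good extension'' (properties (1)--(3) in the paper's proof) is preserved at limits.
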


\begin{lemma}
\label{L: QFisNIP}
Let $\phi(x,y)$ be a quantifier-free formula (in the language $\L_{k,\Gamma,\sigma}\cup\{\ac\}$). Then  it is $\NIP$  in every $\ac$-valued difference field of residue characteristic 0.
\end{lemma}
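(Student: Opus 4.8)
The plan is to reduce the statement to the NIP-ness of the residue field and value group (which, being difference fields and ordered difference groups respectively, are not assumed NIP — so actually the reduction must bottom out somewhere genuinely $\NIP$, namely in the field sort itself). More precisely: a quantifier-free $\L_{k,\Gamma,\sigma}\cup\{\ac\}$-formula $\phi(x,y)$ is a Boolean combination of atomic formulas, and since $\NIP$ formulas are closed under Boolean combinations, it suffices to treat atomic $\phi$. An atomic formula in this three-sorted language, after clearing denominators and using that $\val,\res,\ac$ are the only cross-sort symbols, has one of the forms: (i) a polynomial (difference-polynomial) equation or a valuation inequality $\val(P(x,y)) \geq \val(Q(x,y))$ between terms in the field sort; (ii) a residue-field equation $\res$ or $\ac$ of field terms satisfying a polynomial relation; (iii) an ordered-difference-group relation among $\val$'s of field terms. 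The key point is that each such condition, viewed as a condition on the \emph{field-sort} variables, is controlled by finitely many field-sort polynomials via the map $\rv$ (or via $(\val, \res)$ on suitable localisations): the truth value of $\phi$ on a tuple depends only on the values of $\val P_i$ and the residues of the $P_i/P_j$ for a fixed finite family of difference polynomials $P_i$.

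The core reduction I would carry out is: it is enough to show that a formula of the form $\val(P(x,y)) \bowtie \val(Q(x,y))$ (with $\bowtie\in\{<,=,>\}$, $P,Q$ difference polynomials with coefficients among the parameters $y$) is $\NIP$, and likewise $\res(P(x,y)/Q(x,y)) = \res(R(x,y)/S(x,y))$ is $\NIP$. For the valuation-inequality case I would argue directly in the field sort: a valued field of residue characteristic $0$ (even as a valued \emph{difference} field, forgetting $\sigma$, or keeping it) is $\NIP$ on quantifier-free formulas because $\ACVF$ is $\NIP$ and the relevant conditions are $\ACVF$-quantifier-free conditions on the field sort plus conditions that push forward to $k$ and $\Gamma$ — but since we are only looking at \emph{one} quantifier-free formula, its truth value factors through a fixed definable map from the field sort to $\Gamma^m \times k^n$ (sending $x \mapsto (\val P_i(x,y))_i, (\ac P_i/P_j(x,y))_{ij}$), and on $\Gamma^m$ the relevant conditions are quantifier-free in a pure ordered abelian group ($\NIP$ by weak o-minimality), while on $k^n$ they are quantifier-free in an algebraically closed field ($\NIP$, since $ACF$ is stable). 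So the formula $\phi(x,y)$ factors as $\psi(g(x,y'), h(x,y''))$ where $\psi$ lives in $\Gamma^m \times k^n$ and is $\NIP$ there — hence $\phi$ is $\NIP$. The subtlety is that the parameter tuple $y$ appears inside the difference polynomials, so $g$ and $h$ are not quite "definable functions with parameters disjoint from $y$"; I would handle this by introducing a new variable for each coefficient occurring and noting that $\NIP$-ness is preserved under substituting terms for variables.

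The step I expect to be the main obstacle is making the factorisation precise in the presence of $\sigma$ and of the parameters inside the polynomials: one must check that finitely many difference polynomials $P_1,\dots,P_N$ (depending only on $\phi$, not on the instance) suffice to determine the truth of $\phi(a,b)$ for all $a,b$, and that the induced map $a \mapsto \bigl((\val P_i(a,b))_i,(\res(P_i(a,b)/P_j(a,b)))_{ij}\bigr)$ is genuinely a $b$-definable function into $\Gamma^m \times k^n$ with image conditions that are quantifier-free in the \emph{pure} structures on $\Gamma$ and $k$. Once that is set up, I would invoke: (a) $ACF$ is stable, hence $\NIP$, so quantifier-free $k$-conditions are $\NIP$; (b) ordered abelian groups are $\NIP$ (weakly o-minimal), so quantifier-free $\Gamma$-conditions are $\NIP$; (c) $\NIP$ is preserved under Boolean combinations, pullback along definable functions, and adding dummy variables; and conclude. (Alternatively, and perhaps more cleanly, one cites that $\ACVF$ is $\NIP$ and that quantifier-free $\L_{k,\Gamma}\cup\{\ac\}$-formulas are, up to the $\sigma$-renaming of variables $\sigma^i(x)$, quantifier-free in the Pas language, which interprets into $\ACVF$ — but one still must verify $\sigma$ causes no trouble, which it does not since it only renames finitely many variables per instance.)
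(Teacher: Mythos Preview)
Your parenthetical ``alternative'' at the end is exactly the paper's proof: write $\phi(x,y)=\psi(x,\sigma(x),\ldots,\sigma^n(x),y,\sigma(y),\ldots,\sigma^n(y))$ with $\psi$ quantifier-free in the Pas language $\L_{\rf,\vg}\cup\{\ac\}$, observe that an IP-witness for $\phi$ becomes an IP-witness for $\psi$ after replacing each $a_i,b_s$ by the tuple of its $\sigma$-iterates, and conclude because $\psi$ is $\NIP$ in every $\ac$-valued field of residue characteristic $0$. For that last step the paper proves (as a short claim) that every such field embeds, as an $\ac$-valued field, into an algebraically closed one, and then cites Delon's result that $\ACVF_{0,0}$ in the language with $\ac$ is $\NIP$. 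So your instinct that this is the cleaner route is correct, and you have identified the right missing ingredient (the embedding into $\ACVF$).

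Your main approach---factoring the truth value of $\phi$ through a definable map into $\Gamma^m\times k^n$ and using $\NIP$ of the target---can be made to work, but is genuinely more fiddly than the paper's argument and your sketch has a few imprecisions. First, ``ordered abelian groups are weakly $o$-minimal'' is false (e.g.\ $\mathbb{Z}$); what you want is either the Gurevich--Schmitt theorem that they are $\NIP$, or simply that a \emph{quantifier-free} ordered-group formula remains quantifier-free after embedding into the divisible hull, which is $o$-minimal. Second, your claim that the truth value depends only on the $\val P_i$ and the $\res(P_i/P_j)$ is not quite right in the presence of $\ac$: a condition like $\ac(P)=\ac(Q)$ with $\val(P)\neq\val(Q)$ is not determined by those data. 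You must include the values $\ac(P_i)$ themselves among the $k$-coordinates (which is fine---the map $x\mapsto(\ac P_i(x,y))_i$ is still $y$-definable). Third, the ``it is enough to show'' list of atomic shapes omits general $\rf$-atomic formulas, which are polynomial equations in several $\ac$- and $\res$-values together with $\rf$-components of $y$ and their $\overline{\sigma}$-iterates; these need the same renaming trick on the residue side. None of this is fatal, but by the time you have patched it you have essentially reproved, by hand, that quantifier-free Pas formulas are $\NIP$---which is exactly what the one-line appeal to Delon via the embedding gives you.
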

\begin{proof}
We may write $\phi(x,y)$ as $\psi(x,\sigma(x),\ldots,\sigma^n(x),y, \sigma(y), \ldots, \sigma^n(y))$, where the formula $\psi(x_0,x_1,\ldots,x_n,y_0,y_1,\ldots, y_n)$ is quantifier-free in the language of $\ac$-valued fields $\L_{\rf,\vg}\cup\{\ac\}$. 

\begin{claim*}
Every $\ac$-valued field in residue characteristic 0 embeds into an algebraically closed $\ac$-valued field.
\end{claim*}

\begin{proof}[Proof of the claim]
This should be well known. We could not find a reference, and so we give a proof. Let ${\cal K}=(K,k,\Gamma)$ be an $\ac$-valued field of residue characteristic 0. 
The $\ac$-map (uniquely) extends to the henselisation of ${\cal K}$, so we may assume that ${\cal K}$ is henselian. We may 
now argue as in the proof of Fact \ref{F:ModComp}. It follows from Pas' theorem (Fact \ref{F:Pas-Thm}) that
${\cal K}\equiv(k((\Gamma)),k,\Gamma)$, where the latter is endowed with the standard $\ac$-map. Any Hahn series field embeds 
(as an $\ac$-valued field) in an algebraically closed Hahn series field. The result follows.
\end{proof}

By a result of Delon \cite{Del81} mentioned in the introduction, the theory $\ACVF_{0,0}$, in the language with $\ac$, is NIP.
It thus follows from the claim that $\psi(\bar{x},\bar{y})$ is $\NIP$ in every $\ac$-valued field of residue characteristic 0. Now, assume that $(a_i)_{i \in \omega}$ and $(b_s)_{s \subseteq \omega}$ from some $\ac$-valued difference field ${\cal K}$ are such that $\mathcal{K}\models \phi(a_i,b_s) \Leftrightarrow i \in s$. But then, letting $\bar{a}_i = ( a_i, \sigma(a_i), \ldots, \sigma^n(a_i)), \bar{b}_s = (b_s, \sigma(b_s), \ldots, \sigma^n(b_s))$, we get $\mathcal{K} \models \psi(\bar{a}_i,\bar{b}_s) \Leftrightarrow i \in s$. This contradicts the fact that $\psi(\bar{x},\bar{y})$ is $\NIP$.
\end{proof}

\begin{proof}[Proof of Theorem \ref{T:Main}]
We fix some monster model $\M \models T$. Suppose there is a formula $\phi(x,y)$ with $\TP_2$. By Fact \ref{F:1Variable}, we may assume that $|x|=1$. As the induced structures on $\vg$ and on $\rf$ are $\NTP_2$,  combining Lemma \ref{lem: very indisc witness of TP2} and Lemma \ref{lem: finding indiscernible row} we may assume that $x$ is 
a variable from the valued field sort $\vf$. 

Let $(a_{ij})_{i,j\in\omega}$ be an array witnessing that $\phi(x,y)$ has $\TP_2$, and let $a$ be a realisation of the first column, i.e.\ $a\models\bigwedge_{i\in\omega}\phi(x,a_{i0})$ 
. By Lemma \ref{lem: very indisc witness of TP2} we may assume that:

\begin{itemize}
\item $(a_{ij})_{i,j\in\omega}$ is a strongly indiscernible array;
\item the sequence of rows $(\a_i)_{i\in\omega}$ is $a$-indiscernible.
\end{itemize}

In our proof, we will successively construct arrays $(a^{\alpha}_{ij})$, where $\alpha$ is an ordinal $\leq \omega$ and $a^{\alpha}_{ij}$ is a countable tuple from $\M$, starting with 
$a^{0}_{ij}=a_{ij}$, and such that, 
for any $\beta>\alpha$, there is a decomposition $(a^{\beta}_{ij})=(a^*_{ij}b_{ij}^*)$ satisfying 
\begin{enumerate}[(1)]
\item $(a^\beta_{ij})_{i,j\in\omega}$ is a strongly indiscernible array;\label{Enu:VI}
\item the sequence of rows $(\overline{a}^\beta_i)_{i\in\omega}$ is $a$-indiscernible, and
\item $\overline{a}^*_{i}\equiv_{a^{\alpha}_{i0}}\overline{a}^{\alpha}_{i}$ for all $i\in\omega$\label{Enu:Cons}.
\end{enumerate}

It follows from (\ref{Enu:Cons}) that the the first column is just an extension of the original one, and that in particular we still have 
$a\models\bigwedge_{i\in\omega}\phi(x,a^\beta_{i0})$. Also by (\ref{Enu:Cons}), the rows $\{\phi(x,a^\beta_{ij})\}_{j\in\omega}$ are still inconsistent. As a 
consequence, for any $\beta$, the array $(a^\beta_{ij})$ witnesses that $\phi$ has $\TP_2$. 

Even though only the first column $(a^\alpha_{i0})_{i\in\omega}$ has to be a subtuple of $(a^\beta_{i0})$, we will say, somewhat inaccurately, that we \emph{extend} the array, when we pass from 
$(a^\alpha_{ij})$ to $(a^\beta_{ij})$. An extension of arrays will be called \emph{good} if it satisfies the properties (\ref{Enu:VI})--(\ref{Enu:Cons}) above.

\smallskip

The construction will be done in steps, following the back-and-forth system one may infer from (2) in 
Lemma \ref{L:Back-and-forth-QE}. There will be two kinds of successor steps: \emph{auxiliary steps}, where Lemma \ref{lem: adding carelessly} is used to extend the array $(a_{ij}^\alpha)$ carelessly, to add new parameters; \emph{treating steps}, 
where the Array Extension Lemma (Lemma \ref{prop: adding carefully}) is used to extend the array $(a_{ij}^\alpha)$ carefully, respecting partial information from $\tp(a/a_{00}^\alpha)$ coming from 
the $\NTP_2$ sorts $\vg$ and $\rf$. The final step dealing with an immediate extension will follow from Lemma \ref{L:NTP2determined}.

If $(a_{ij}^\alpha)$ have been constructed for all $\alpha < \omega$ such that $(a_{ij}^\beta)$ is a good extension of $(a^\alpha_{ij})$ for all $\alpha<\beta<\omega$, then
we may find an array $(a_{ij}^\omega)$ with $a_{i0}^\omega=\bigcup_{\alpha<\omega}a_{i0}^\alpha$ for all $i \in \omega$ and such that $(a_{ij}^\omega)$ is a good extension of $(a^\alpha_{ij})$, for all $\alpha<\omega$. Indeed, this follows 
from compactness, as properties (\ref{Enu:VI})--(\ref{Enu:Cons}) are type-definable in the variables $(x^\omega_{ij})_{i,j\in\omega}$.

\medskip

\begin{enumerate}[(I)]
\item\label{alg-step} Given $(a^\alpha_{ij})$, there is a good extension $(a^{\alpha+1}_{ij})$ such that $(a^{\alpha+1}_{00})$ enumerates a substructure 
${\cal K}^{\alpha+1}=(K^{\alpha+1},k^{\alpha+1},\Gamma^{\alpha+1})$ where both $K^{\alpha+1}$ and $k^{\alpha+1}$ are difference fields and $\Gamma^{\alpha+1}$ is an ordered $\Z[\sigma]$-module.

\noindent
[By Lemma \ref{lem: adding carelessly}.]

\medskip

In what follows, we may always assume that $a^{\alpha}_{00}$ is as in the conclusion of step \ref{alg-step}. To ease the notation, we write $a^{\alpha}_{00}=(K,k,\Gamma)$ and 
$a^{\alpha+1}_{00}=(K',k',\Gamma')$. Let $L:=K\langle a\rangle$, and $L':=K'\langle a\rangle$. Recall that $a\models\bigwedge_{i\in\omega}\phi(x,a_{i0})$ .
\medskip
\item \label{res-step} Given $(K,k,\Gamma)$, there is a good extension such that $k_{K'} \supseteq k$. 

\noindent
[By Lemma \ref{lem: adding carelessly}.]

\medskip
\item \label{val-step} Given $(K,k,\Gamma)$, there is a good extension such that $\Gamma_{K'} \supseteq \Gamma$. 

\noindent
[By Lemma \ref{lem: adding carelessly}.]
\medskip
\item \label{inert-step} Given $(K,k,\Gamma)$, there is a good extension such that $\ac(L)\subseteq k'$. 

\noindent
[By Lemma \ref{prop: adding carefully} with $b = \ac(L)$, as $\rf$ is stably embedded (Lemma \ref{L:QE-consequences}(1)), and $\NTP_2$ by assumption.]
\medskip
\item \label{ram-step} Given $(K,k,\Gamma)$, there is a good extension such that $\Gamma_L\subseteq \Gamma'$. 

\noindent
[By Lemma \ref{prop: adding carefully}, as $\vg$ is stably embedded (Lemma \ref{L:QE-consequences}(1)), and $\NTP_2$ by assumption.]

\medskip
Iterating steps \ref{alg-step}-\ref{ram-step} and passing to the limit, we may thus construct a good extension $(a^\omega_{ij})$ such that $(K,k,\Gamma)$ is coming from an $\ac$-valued difference field (i.e. $k=k_K$ and 
$\Gamma=\Gamma_K$) and such that $L/K$ is an immediate extension.

\medskip

\item As $\ac(K)\subseteq k_K$, we may apply Lemma \ref{L:QE-consequences}(2), and so $\tp(a/K)$ is determined by its quantifier-free part. By Lemma \ref{L: QFisNIP} every quantifier-free formula is $\NIP$, so in particular is $\NTP_2$. Thus, $\tp(a/K)$ is $\NTP_2$-determined. From Lemma \ref{L:NTP2determined} it then follows that $\{\phi(x,a^{\omega}_{0j}) \}_{j \in \omega}$ is consistent. But $\{\phi(x,a^\omega_{ij})\}_{i,j<\omega}$ is a witness that $\phi(x,y)$ is $\TP_2$ --- a contradiction.\qedhere

\noindent

\end{enumerate}
\end{proof}

In fact, it is easy to see that the previous proof gives the following stronger result. 

\begin{remark}
With the notations and assumptions of Theorem \ref{T:Main}, suppose that $T_r'\supseteq \mathrm{Th}(k,\overline{\sigma})$ and $T_v'\supseteq \mathrm{Th}(\Gamma,\sigma)$ are expansions which are both $\NTP_2$. Then, $T':=T\cup T_r'\cup T_v'$ eliminates $\vf$-quantifiers and is $\NTP_2$.\qed
\end{remark}

\subsection{Applications to $\sigma$-henselian valued difference fields}\label{sec: applications of the main theorem}
We start with the contractive case. We fix a completion $T$ of $T_0^{\ac}$, so $T$ is of the form $T_0^{\ac}\cup T_r\cup T_v$, where $T_r$ is a complete theory of difference fields of characteristic 0 (which has to be assumed 
to be linearly difference closed by Remark \ref{R:LinDiffCl}) and $T_v$ is a complete theory of ordered $\Z[\sigma,\sigma^{-1}]$-modules.

\begin{theorem}\label{T:Main-omega}
Assume that both $T_r$ and $T_v$ are $\NTP_2$. Then, $T$ is $\NTP_2$.
\end{theorem}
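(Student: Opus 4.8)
The plan is to deduce Theorem \ref{T:Main-omega} from the general Theorem \ref{T:Main} by simply checking that the completion $T = T_0^{\ac}\cup T_r\cup T_v$ satisfies the hypotheses of the latter. First I would recall that, by Fact \ref{F:sigmaAKE}(1) (Durhan's theorem), $T_0^{\ac}$, and hence any completion $T$, eliminates $\vf$-quantifiers; this is exactly the first hypothesis of Theorem \ref{T:Main}. Second, by construction $T$ has residue characteristic $0$ (indeed equicharacteristic $0$), which is the second hypothesis. So the only thing left to verify is that the induced structure on the residue field $k$ and on the value group $\Gamma$ are each $\NTP_2$.

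For the residue field, Fact \ref{F:sigmaAKE}(2) tells us that $k_K = \rf(\mathcal K)$ is stably embedded and that the induced structure is precisely that of a pure difference field, i.e.\ a model of $T_r$; by hypothesis $T_r$ is $\NTP_2$. Similarly, the induced structure on $\Gamma_K = \vg(\mathcal K)$ is that of a pure ordered $\Z[\sigma,\sigma^{-1}]$-module, i.e.\ a model of $T_v$, which is $\NTP_2$ by hypothesis. (Note that for the value group, being $\NTP_2$ as a pure ordered $\Z[\sigma,\sigma^{-1}]$-module is the same as being $\NTP_2$ as an ordered difference group in the language $\LODG$, since the two structures interdefine each other: the $\Z[\sigma,\sigma^{-1}]$-module operations are definable from $+$ and $\sigma$, and conversely.) Having checked all three hypotheses, Theorem \ref{T:Main} applies directly and yields that $T$ is $\NTP_2$.

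The proof is therefore essentially a bookkeeping argument; there is no genuine obstacle, since all the real work is done in Theorem \ref{T:Main} and in the quantifier-elimination results of Durhan quoted in Fact \ref{F:sigmaAKE}. The only point requiring the slightest care is the identification of the induced structures on the two stably embedded sorts with the theories $T_r$ and $T_v$ — but this is exactly the content of Fact \ref{F:sigmaAKE}(2), so it can be invoked verbatim. Concretely, the proof reads:

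\begin{proof}
By Fact \ref{F:sigmaAKE}(1), $T$ eliminates $\vf$-quantifiers, and $T$ has residue characteristic $0$. By Fact \ref{F:sigmaAKE}(2), the induced structure on the residue field $k$ is that of a pure difference field, namely a model of $T_r$, which is $\NTP_2$ by assumption; likewise, the induced structure on the value group $\Gamma$ is that of a pure ordered $\Z[\sigma,\sigma^{-1}]$-module, namely a model of $T_v$, which is $\NTP_2$ by assumption (equivalently, $\Gamma$ viewed as an ordered difference group in $\LODG$ is $\NTP_2$, as the two structures are interdefinable). Hence all the hypotheses of Theorem \ref{T:Main} are satisfied, and we conclude that $T$ is $\NTP_2$.
\end{proof}
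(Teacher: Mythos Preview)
Your proof is correct and follows the same route as the paper's proof, which is simply ``Combine Theorem~\ref{T:Main} with Fact~\ref{F:sigmaAKE}(1).'' Your appeal to Fact~\ref{F:sigmaAKE}(2) and the remark on interdefinability are harmless but not strictly needed: the hypotheses of Theorem~\ref{T:Main} concern $k$ \emph{as a difference field} and $\Gamma$ \emph{as an ordered difference group}, which are by definition models of $T_r$ and $T_v$, so the $\NTP_2$ assumption applies directly without first identifying the full induced structure.
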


\begin{proof}
Combine Theorem \ref{T:Main} with Fact \ref{F:sigmaAKE}(1).
\end{proof}

\begin{corollary}\label{C:VFA-NTP2}
Every completion of $\VFA$ is $\NTP_2$.
\end{corollary}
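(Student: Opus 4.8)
The statement to prove is Corollary \ref{C:VFA-NTP2}: every completion of $\VFA$ is $\NTP_2$.

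The plan is to derive this directly from Theorem \ref{T:Main-omega}. Recall that $\VFA$ is by definition $T_0^{\ac}$ together with axioms expressing that the residue difference field is a model of $\ACFA_0$ and that the value group is a model of $\IncDODG$. A completion $T$ of $\VFA$ is therefore a completion of $T_0^{\ac}$, hence of the form $T_0^{\ac}\cup T_r\cup T_v$ where $T_r$ is a completion of $\ACFA_0$ (as a theory of difference fields) and $T_v$ is a completion of $\IncDODG$ (as a theory of ordered $\Z[\sigma,\sigma^{-1}]$-modules); this decomposition is exactly the content of Fact \ref{F:sigmaAKE}(3a), which says a model of $T_0^{\ac}$ is determined up to elementary equivalence by its residue difference field and its value group. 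So it suffices to check that $T_r$ and $T_v$ are each $\NTP_2$ and then invoke Theorem \ref{T:Main-omega}.

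For $T_v$: by Fact \ref{F:ValInc}, $\IncDODG$ eliminates quantifiers and is $o$-minimal, hence $\NIP$, hence $\NTP_2$ by Fact \ref{F:NIPSimpleIsNTP2}. In particular any completion $T_v$ is $\NTP_2$. For $T_r$: any completion of $\ACFA_0$ is simple — this is the Chatzidakis--Hrushovski result that $\ACFA$ is simple — and simple theories are $\NTP_2$ again by Fact \ref{F:NIPSimpleIsNTP2}. (One could alternatively just cite that $\ACFA$ is $\NTP_2$ directly.) With both hypotheses of Theorem \ref{T:Main-omega} verified, that theorem yields that $T$ is $\NTP_2$, which is the desired conclusion.

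There is essentially no obstacle here: the corollary is a bookkeeping consequence of the main theorem once one recalls that $\ACFA$ is simple and $\IncDODG$ is $o$-minimal, both of which are cited facts in the paper. The only minor point to be careful about is making the decomposition $T = T_0^{\ac}\cup T_r\cup T_v$ explicit and noting that $T_r$ is automatically linearly difference closed (as required for completions of $T_0^{\ac}$) since models of $\ACFA_0$ are linearly difference closed by Lemma \ref{L:ACFALinCl}. Thus the proof is simply: a completion of $\VFA$ decomposes as above with $T_r\models$ "completion of $\ACFA_0$", which is simple hence $\NTP_2$, and $T_v\models$ "completion of $\IncDODG$", which is $o$-minimal hence $\NTP_2$; now apply Theorem \ref{T:Main-omega}.
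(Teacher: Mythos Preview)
Your proposal is correct and follows essentially the same approach as the paper: the paper's proof likewise observes that every completion of $\ACFA_0$ is simple (hence $\NTP_2$) and that $\IncDODG$ is $o$-minimal (hence $\NIP$, hence $\NTP_2$), and then concludes by Theorem \ref{T:Main-omega}. Your version merely spells out the decomposition $T = T_0^{\ac}\cup T_r\cup T_v$ and the linear difference closedness point a bit more explicitly, but the argument is the same.
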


\begin{proof}
Every completion of $\ACFA_0$ is simple by \cite{ChHr99}, so in particular it is $\NTP_2$ by Fact \ref{F:NIPSimpleIsNTP2}. The theory $\IncDODG$ is $o$-minimal 
by Fact \ref{F:ValInc}, so in particular it is $\NIP$ and thus $\NTP_2$, by Fact \ref{F:NIPSimpleIsNTP2}. We may thus conclude by Theorem \ref{T:Main-omega}.
\end{proof}

Next, we consider the isometric case.
\begin{theorem}\label{T:PresNTP2Isom}
Let ${\cal K}=(K,k,\Gamma,\sigma,\val,\ac)\models S_0^{ac}$, i.e., ${\cal K}$ is a $\sigma$-henselian valued difference field of residue characteristic 0, $\sigma$ is an isometry and there 
are enough constants. Then $\Th({\cal K})$ is $\NTP_2$ if and only if $\Th(k,\overline{\sigma})$ is $\NTP_2$.
\end{theorem}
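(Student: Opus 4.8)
\textbf{Proof plan for Theorem \ref{T:PresNTP2Isom}.}

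The plan is to deduce this from the general preservation result, Theorem \ref{T:Main}, in essentially the same way as Theorem \ref{T:Main-omega} was deduced, but taking care of the two directions of the equivalence separately. For the non-trivial direction (assuming $\Th(k,\overline{\sigma})$ is $\NTP_2$, conclude $\Th({\cal K})$ is $\NTP_2$), the first step is to observe that in the isometric case the value group $\Gamma$ carries, by Fact \ref{F:IsomAKE}(2), the structure of a \emph{pure ordered abelian group} (the induced automorphism $\sigma_\Gamma$ being the identity, which is definable). Ordered abelian groups are $\NIP$ — this is classical (Gurevich--Schmitt) — hence $\NTP_2$ by Fact \ref{F:NIPSimpleIsNTP2}. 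Now $S_0^{ac}$ eliminates $\vf$-quantifiers by Fact \ref{F:IsomAKE}(1), so the hypotheses of Theorem \ref{T:Main} are met: both the residue difference field and the value group (as an ordered difference group, here with trivial $\sigma$) are $\NTP_2$. Therefore ${\cal K}$, and hence $\Th({\cal K})$, is $\NTP_2$.

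For the converse direction, the point is simply that $k=\rf({\cal K})$ is stably embedded in ${\cal K}$ with induced structure that of a difference field, again by Fact \ref{F:IsomAKE}(2). A stably embedded definable set inside an $\NTP_2$ structure is itself $\NTP_2$ in its full induced structure: if some formula $\psi(x,y)$ in the language of difference fields had $\TP_2$ witnessed by an array from $k$, the same array (viewed inside ${\cal K}$, with $\psi$ rewritten using the embedding of $k$ into ${\cal K}$) would witness $\TP_2$ of a formula of ${\cal K}$, contradicting that ${\cal K}$ is $\NTP_2$. Hence $\Th(k,\overline{\sigma})$ is $\NTP_2$ whenever $\Th({\cal K})$ is. (This direction in fact uses nothing special about the isometric case and would apply equally in the contractive setting.)

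I do not expect any genuine obstacle here: the whole content is packaged into Theorem \ref{T:Main} and Fact \ref{F:IsomAKE}. The only mild point worth flagging is the appeal to ``ordered abelian groups are $\NIP$'', which is an external classical fact rather than something proved in the excerpt; it should be cited. If one wanted a fully self-contained argument, one could instead verify directly that a pure ordered abelian group is $\NTP_2$, but since $\NIP$ is the standard and cleanest reference point, I would just cite it. A one-line proof in the paper — ``Combine Theorem \ref{T:Main} with Fact \ref{F:IsomAKE}, using that pure ordered abelian groups are $\NIP$ and hence $\NTP_2$; the converse follows from stable embeddedness of the residue sort'' — suffices.
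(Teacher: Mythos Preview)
Your proposal is correct and follows essentially the same approach as the paper: combine Theorem~\ref{T:Main} with Fact~\ref{F:IsomAKE}(1), noting that since $\sigma_\Gamma=\mathrm{id}$ the value group is a pure ordered abelian group, hence $\NIP$ by Gurevich--Schmitt and thus $\NTP_2$. The paper's proof is a one-liner that does not explicitly spell out the converse direction, so your treatment of that via stable embeddedness of $\rf$ is a welcome bit of extra care.
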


\begin{proof}
Combine Theorem \ref{T:Main} with Fact \ref{F:IsomAKE}. Note that in the isometric case, as well as in the case of henselian valued fields (Fact \ref{F:NTP2ofValFields}), there is no condition on 
$\Gamma$, since $\sigma_{\Gamma}=\mathrm{id}$ in these cases, and so the induced structure is that of an ordered abelian group. By a result of Gurevich and Schmidt \cite{GuSc84}, any ordered 
abelian group is NIP.
\end{proof}

Finally, our result applies to valued fields without an automorphism in the language as well.

\begin{corollary}[\cite{Che12}]\label{F:NTP2ofValFields}
Let ${\cal K}= (K,\Gamma,k)$ be a henselian valued field of residue characteristic $0$ in the Denef-Pas language. Assume that the theory of $k$ is $\NTP_2$. Then the theory of ${\cal K}$ is $\NTP_2$.\end{corollary}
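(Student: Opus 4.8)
The plan is to deduce Corollary \ref{F:NTP2ofValFields} directly from Theorem \ref{T:Main} by taking $\sigma = \mathrm{id}$, exactly as in the proof of Theorem \ref{T:PresNTP2Isom}. First I would observe that a henselian valued field $(K,\Gamma,k)$ of residue characteristic $0$ in the Denef--Pas language $\L_{\rf,\vg}\cup\{\ac\}$ can be viewed as an $\ac$-valued difference field $\mathcal{K}=(K,k,\Gamma,\sigma)$ with $\sigma = \mathrm{id}$ (on all three sorts). The angular component map trivially commutes with the identity, so this is a legitimate $\ac$-valued difference field. Since the Denef--Pas language is precisely $\L_{\rf,\vg}\cup\{\ac\}$ and adding the identity automorphism adds no new structure, $T=\Th(\mathcal{K})$ is interdefinable with the theory of $\mathcal{K}$ in the language of $\ac$-valued difference fields.

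Next I would check that the hypotheses of Theorem \ref{T:Main} are met. The residue characteristic is $0$ by assumption. By Pas' theorem (Fact \ref{F:Pas-Thm}), the theory of henselian $\ac$-valued fields of residue characteristic $0$ eliminates $\vf$-quantifiers in the language of Pas; since the identity automorphism contributes nothing, this persists in the language of $\ac$-valued difference fields, so $T$ eliminates $\vf$-quantifiers. It remains to verify the $\NTP_2$ hypotheses on the two auxiliary sorts. The induced structure on $k$ (as a difference field with $\overline{\sigma}=\mathrm{id}$) is just the field structure on $k$, whose theory is $\NTP_2$ by assumption. The induced structure on $\Gamma$ (as an ordered difference group with $\sigma_\Gamma=\mathrm{id}$) is just that of an ordered abelian group, and by the result of Gurevich and Schmidt \cite{GuSc84} every ordered abelian group is $\NIP$, hence $\NTP_2$ by Fact \ref{F:NIPSimpleIsNTP2}.

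With all hypotheses verified, Theorem \ref{T:Main} immediately yields that $\mathcal{K}$, and therefore $\Th(\mathcal{K})$, is $\NTP_2$; unwinding the interdefinability, $\Th(K,\Gamma,k)$ in the Denef--Pas language is $\NTP_2$. I do not anticipate a genuine obstacle here: the only points requiring a word of justification are (i) that passing to the language with a trivial $\sigma$ does not change the theory or disturb quantifier elimination, and (ii) invoking the Gurevich--Schmidt result for the value group sort; both are routine. This also matches the statement as already proved in \cite{Che12}, so the corollary is essentially a specialisation of our general transfer principle to the case $\sigma=\mathrm{id}$.
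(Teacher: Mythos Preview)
Your proposal is correct and follows essentially the same approach as the paper: view the valued field as an $\ac$-valued difference field with $\sigma=\mathrm{id}$, invoke Pas' theorem for elimination of $\vf$-quantifiers, and apply Theorem \ref{T:Main}. The paper's proof is more terse, but your explicit verification of the $\NTP_2$ hypothesis on $\Gamma$ via Gurevich--Schmidt is exactly the justification the paper gives (in the proof of Theorem \ref{T:PresNTP2Isom}) for why no condition on $\Gamma$ is needed.
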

\begin{proof}
Any $\ac$-valued field may be considered as an $\ac$-valued difference field, with $\sigma=\mathrm{id}$.
As $(K,\Gamma,k)$ eliminates field quantifiers (Fact \ref{F:Pas-Thm}), Theorem \ref{T:Main} applies.
\end{proof}

We remark that the proof from \cite{Che12} also shows that strength is preserved, see Section \ref{sec:Further model theoretic properties of VFA}.

\medskip

\begin{remark}\label{R:Mult-NTP2}
One may show in the same way that in the multiplicative case from \cite{Pal10} (see Remark \ref{R:Mult-Case}), the 
valued difference field is $\NTP_2$, provided $\RV$ (with the induced structure) is $\NTP_2$.
\end{remark}

\section{Open problems}

In the last section we discuss some open problems, pose several questions and consider possible research directions around model-theoretic properties of valued difference fields.

\subsection{Further model theoretic properties of $\VFA$} \label{sec:Further model theoretic properties of VFA}

\begin{definition}
A theory is called \emph{strong} if there are no $(\phi_i(x,y_i), \bar{a}_i, k_i)$ with $\bar{a}_i = (a_{ij})_{j \in \omega}$ and $k_i \in \omega$ such that:
\begin{itemize}
\item $\{ \phi_i(x,a_{ij}) \}_{j \in \omega}$ is $k_i$-inconsistent, for all $i \in \omega$,
\item $\{ \phi_i(x,a_{i f(i)}) \}_{i \in \omega}$ is consistent for every $f: \omega \to \omega$.
\end{itemize}
\end{definition}

Strong theories were defined by Adler in \cite{AdlBurden}. They form a subclass of $\NTP_2$ theories which can be viewed as `super $\NTP_2$'. For more on strong theories and the related notion of \emph{burden} see \cite{Che12}.
\begin{question}
Is $\VFA$ strong?
\end{question}
The following remark implies that $\VFA$ is not of finite burden, as in a simple theory burden of a type equals the supremum of the weights of its completions \cite{AdlBurden}. 

\begin{remark}
\begin{enumerate}
\item Let $T$ be a simple theory. Assume that in $T$ there is a (type-) definable infinite field $F$ and a (type-)definable $F$-vector space $V$ of dimension $\geq n$. 
Then, there is a type $p(x)\vdash x\in V$ such that $\weight(p)\geq n$. 
\item Every completion of $\ACFA$ has a 1-type of weight $\geq n$, for any $n \in \omega$.
\end{enumerate}
\end{remark}

\begin{proof}
To prove (1), choose $v_1,\ldots,v_n\in V$ which are $F$-linearly independent. Choose any non-algebraic type $q(x)$ 
such that $q(x)\vdash x\in F$. Let $\b=(b_1,\ldots,b_n)$ be a sequence of independent realisations of $q$ (over some model $M$ containing the $v_i$'s). 
Let $b:=\sum_{i=1}^n b_i v_i$. Since $\b$ and $b$ are interdefinable over $M$, we compute (see e.g. \cite[Lemma 5.2.4]{WagnerBook}):
$\weight(\tp(b/M))=\weight(\tp(\b/M))=n\weight(q)\geq n$. 

The second part follows, considering $F:=\mathrm{Fix}(\sigma)$ and $V:=K$ and noting that the dimension of $V$ over $F$ is infinite.
\end{proof}

\

\begin{definition}
We say that a formula $\varphi\left(x,y\right)$ is \emph{resilient}
if it satisfies the following property:
\begin{itemize}
\item For any indiscernible sequences $\bar{a}=\left(a_{i}\right)_{i\in\mathbb{Z}}$
and $\bar{b}=\left(b_{i}\right)_{i\in\mathbb{Z}}$ such that $a_{0}=b_{0}$
and $\bar{b}$ is indiscernible over $a_{\neq0}$, if $\left\{ \varphi\left(x,a_{i}\right)\right\} _{i\in\mathbb{Z}}$
is consistent, then $\left\{ \varphi\left(x,b_{i}\right)\right\} _{i\in\mathbb{Z}}$
is consistent.
\end{itemize}
A theory is resilient if it implies that every formula is resilient.
\end{definition}
Resilient theories were introduced in \cite{CheBY} where it was observed
that:
\begin{remark}
\label{rem:NIPimpliesResilience}
\begin{enumerate}
\item Every formula in a simple theory is resilient.
\item Every $\NIP$ formula is resilient.
\item Every resilient theory is $\NTP_{2}$.
\end{enumerate}
\end{remark}

It is not known if there are $\NTP_{2}$
theories which are not resilient. 

\begin{conjecture}
An analog of Theorem \ref{T:Main} holds for resilience.
\end{conjecture}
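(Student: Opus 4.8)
The conjecture asks us to replace ``$\NTP_2$'' by ``resilient'' throughout Theorem~\ref{T:Main}: namely, if ${\cal K}=(K,k,\Gamma)$ is an $\ac$-valued difference field of residue characteristic $0$ with $T=\Th({\cal K})$ eliminating $\vf$-quantifiers, and if both the residue difference field $k$ and the value difference group $\Gamma$ are resilient, then ${\cal K}$ is resilient. The plan is to re-run the proof of Theorem~\ref{T:Main} verbatim in its overall architecture --- the back-and-forth system from Lemma~\ref{L:Back-and-forth-QE}, realized through the successive steps \ref{alg-step}--\ref{ram-step} followed by the immediate-extension step --- replacing each appeal to an $\NTP_2$-specific lemma of Section~\ref{S:NTP2} by its resilient analogue. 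So the real work is not in the valued-field part at all; it is in establishing a toolkit of resilience lemmas paralleling Lemmas~\ref{lem: adding carelessly}, \ref{lem: finding indiscernible row}, \ref{prop: adding carefully}, \ref{lem: very indisc witness of TP2}, and~\ref{L:NTP2determined}. Several of these are purely combinatorial (about manipulating indiscernible sequences and arrays) and carry over with no change, since resilience is formulated in terms of indiscernible sequences just as $\TP_2$ is.

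\textbf{Key steps.} First I would record the combinatorial preliminaries: Lemma~\ref{lem: auxiliary lemmas on indisc} and Lemma~\ref{lem: adding carelessly} make no reference to $\NTP_2$ and are available as-is. The crucial new ingredient is the resilient analogue of Lemma~\ref{lem: finding indiscernible row} (``finding an indiscernible row''): given a stably embedded $\emptyset$-definable set $D$ with $D_{\indu}$ resilient, and a long enough array with mutually indiscernible rows over $C$, together with a small $\bar b \subseteq D$, one should be able to find a single row and a $C\bar b$-indiscernible copy of it agreeing with the original over $c_{i0}C$. The proof of Lemma~\ref{lem: finding indiscernible row} is almost built for this: it already produces, in the bad case, two indiscernible sequences $\bar e$ and the shifted row, witnessing a failure of consistency transfer --- one then checks this is exactly a violation of resilience of a formula of $D_{\indu}$, using stable embeddedness to pull the relevant instances back into $D$ and the definition of resilience (indexed by $\mathbb Z$, with one sequence indiscernible over the other minus its $0$th term). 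With this in hand, the Array Extension Lemma (Lemma~\ref{prop: adding carefully}) goes through word for word with ``resilient'' in place of ``$\NTP_2$'', since its only use of the hypothesis is the single invocation of Lemma~\ref{lem: finding indiscernible row}. Next I would formulate a ``resilience-determined'' analogue of $\NTP_2$-determinedness, and prove the analogue of Lemma~\ref{L:NTP2determined}: if $\tp(c/a_{00})$ is generated by a conjunction-closed family $\Phi$ of resilient formulas, then along a strongly indiscernible array that $c$ realizes on the first column (with rows indiscernible over $c$), consistency of the first row follows --- here one uses resilience of the generating $\psi\in\Phi$ to transfer consistency of $\{\psi(x,a_{i0})\}_i$ along the sequence of rows. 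The final step is unchanged: Lemma~\ref{L: QFisNIP} says quantifier-free formulas are $\NIP$, hence (by Remark~\ref{rem:NIPimpliesResilience}(2)) resilient, so over the algebraic/residue/value closure $K$ reached by steps \ref{alg-step}--\ref{ram-step} the type $\tp(a/K)$ is resilience-determined via Lemma~\ref{L:QE-consequences}(2), and the resilient analogue of Lemma~\ref{L:NTP2determined} yields the contradiction with $\phi$ being non-resilient (after the standard reduction, via the resilient analogue of Lemma~\ref{lem: very indisc witness of TP2}, to a strongly indiscernible array realized on the first column).

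\textbf{Main obstacle.} I expect the genuine difficulty to be the reduction of ``some formula of ${\cal K}$ is non-resilient'' to ``some formula of ${\cal K}$ in a \emph{single} valued-field variable is non-resilient with a witness in the convenient strongly-indiscernible-array form.'' For $\NTP_2$ this was handled by Fact~\ref{F:1Variable} (one-variable reduction, from \cite{Che12}) and Lemma~\ref{lem: very indisc witness of TP2}; it is not obvious that resilience enjoys an analogous one-variable reduction, nor that a non-resilient configuration can always be upgraded to one carried by a strongly indiscernible array with the row sequence indiscernible over the realization. One route is to observe that resilience of a formula is equivalent, by a compactness/Ramsey argument, to a statement about strongly indiscernible arrays of the same shape as in Lemma~\ref{lem: very indisc witness of TP2} (a ``very indiscernible witness'' of non-resilience), and then to check that the one-variable reduction argument of \cite{Che12}, which works sort-by-sort using stable embeddedness and orthogonality of $\rf$ and $\vg$ (Lemma~\ref{L:QE-consequences}(1)), is insensitive to the difference between $\TP_2$ and non-resilience --- both being ``failure of a consistency-transfer along mutually indiscernible rows.'' A secondary, more bookkeeping-type subtlety is that the definition of resilience fixes the index set $\mathbb Z$ and the alignment $a_0=b_0$ with $\bar b$ indiscernible over $a_{\neq 0}$; one must make sure the array-to-sequence extractions in the resilient versions of Lemmas~\ref{lem: finding indiscernible row} and~\ref{L:NTP2determined} produce sequences in exactly this shape, which is where Lemma~\ref{lem: auxiliary lemmas on indisc}(\ref{lem_item: consistentcy of sequence implies can move}) and standard stretching arguments come in. Once these combinatorial underpinnings are in place, the valued-field half of the argument --- steps \ref{alg-step}--\ref{ram-step} and the immediate-extension conclusion --- transfers with no new ideas.
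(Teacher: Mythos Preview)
The statement you are addressing is a \emph{conjecture}, not a theorem: the paper does not prove it. Immediately after stating the conjecture, the authors write: ``An earlier version of the article contained a purported proof of this following the strategy of the proof for $\NTP_2$, but a flaw was pointed out by the referee.'' Your proposal is precisely that strategy --- adapt the array machinery of Section~\ref{S:NTP2} by replacing each $\NTP_2$-lemma with a hoped-for resilient analogue --- so you are reproducing the approach that the authors themselves could not make work.

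The place where the argument most plausibly breaks is the interface between the \emph{array} framework of Section~\ref{S:NTP2} and the \emph{two-sequence} definition of resilience. A failure of resilience gives you indiscernible sequences $\bar a,\bar b$ indexed by $\mathbb Z$ with $a_0=b_0$ and $\bar b$ indiscernible over $a_{\neq 0}$; it does not hand you a strongly indiscernible array. You assert that ``the resilient analogue of Lemma~\ref{lem: very indisc witness of TP2}'' upgrades a non-resilience witness to array form, and that ``the resilient analogue of Lemma~\ref{lem: finding indiscernible row}'' follows because the bad case already produces the right pair of sequences. But in the proof of Lemma~\ref{lem: finding indiscernible row} the parameters $e_{ij}\in D$ are chosen non-canonically from the $c_{ij}$ via stable embeddedness, row by row; there is no reason the resulting $(e_{ij})$ should be strongly indiscernible, and without that you cannot extract a column that is indiscernible together with a row indiscernible over its complement --- exactly what a resilience violation requires. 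More fundamentally, the extension steps of Theorem~\ref{T:Main} (via Lemma~\ref{prop: adding carefully}) rely on enlarging an array while preserving mutual indiscernibility of rows and indiscernibility of the row-sequence over $a$. In the resilience setting you must instead enlarge the coupled pair $(\bar a,\bar b)$ while simultaneously maintaining that $\bar a$ is indiscernible over the realization \emph{and} that $\bar b$ is indiscernible over $a_{\neq 0}$; these constraints interact, and enlarging $\bar a$ typically destroys the indiscernibility of $\bar b$ over $a_{\neq 0}$. Your proposal does not address this coupling.

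A second gap you yourself flag but do not close: there is no known analogue of Fact~\ref{F:1Variable} (reduction to a single variable) for resilience. The $\NTP_2$ version is a nontrivial result from \cite{Che12}, and its proof does not transfer automatically. Without it, you cannot assume $|x|=1$, and the back-and-forth reduction to an immediate extension in one variable does not get started. In short: the parts of your plan that do go through (e.g.\ the resilience-determined analogue of Lemma~\ref{L:NTP2determined}, and Lemma~\ref{L: QFisNIP} via Remark~\ref{rem:NIPimpliesResilience}(2)) are the easy parts; the hard parts --- the array-to-sequence translation and the one-variable reduction --- are exactly where the authors' own attempt failed, and your sketch does not supply the missing ideas.
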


An earlier version of the article contained a purported proof of this following the strategy of the proof for $\NTP_2$, but a flaw was pointed out by the referee.

Some further model theoretic properties of $\VFA$ are of interest, both for sets in the real sort and  in $M^{eq}$, and most importantly in the \emph{geometric sorts }from 
\cite{HHMStableDomination}:

\begin{question}
\begin{enumerate}
\item Is $\VFA$ extensible? I.e. is it true that for every small set $A$, every type $p(x) \in S(A)$ has a global extension which does not fork over $A$? Note that it is enough to check this property for $1$-types.

\item Is $\VFA$ low? I.e., is it true that for every formula $\phi(x,y)$ there is $k \in \omega$ such that for every indiscernible sequence $(a_i)_{i \in \omega}$, the set $\{\phi(x,a_i)\}_{i \in \omega}$ is consistent if and only if it is $k$-consistent?

\item Does $\VFA$ eliminate $\exists^{\infty}$?

\end{enumerate}
\end{question}

It seems tempting to try to develop a theory of \emph{simple} domination in $\VFA$ (parallel to stable domination from \cite{HHMStableDomination}). Some elements of the theory of simple types in $\NTP_2$ theories are developed in \cite{Che12}. 

\begin{question}
Is it true in $\VFA$ that a union of two stably embedded sets is stably embedded? Is it at least true for simple stably embedded sets?
\end{question}

\subsection{Ordered modules}
With a view on our main results, it would be interesting to know which ($\omega$-increasing) ordered difference 
groups are $\NTP_2$, or even $\NIP$. We will put this issue in a larger context. Let $R$ be an ordered ring. Recall 
that an \emph{ordered $R$-module} is an ordered abelian group $\langle M,0,+,<\rangle$ together with an action 
of $R$ by endomorphisms which is compatible with the orderings, i.e.\ such that $r\cdot m>0$ for all $r>0$ from $R$ and 
all $m>0$ from $M$. We consider ordered $R$-modules in the language $\L_{R- mod,<}=\{0,+,<\}\cup\{\lambda_r\mid r\in R\}$, where 
$\lambda_r$ is a unary function which is interpreted by the scalar multiplication by $r$.

\begin{question}
\begin{enumerate}
\item Are all ordered $R$-modules $\NIP$ (for all ordered rings $R$)?
\item More specifically, are all $\omega$-increasing ordered difference groups $\NIP$? (This corresponds to the case where $R=\Z[\sigma,\sigma^{-1}]$, see Section \ref{Sub:ODG}.)
\end{enumerate}
\end{question}

Recall that every module is stable (see e.g. \cite{Hod93}), and that every ordered abelian group is $\NIP$, by a result 
of Gurevich and Schmidt \cite{GuSc84}. Therefore, one might suspect a positive answer even to the first question. It 
seems that the answer to this question is unknown.

\smallskip

We now give a result which covers some easy cases. There are some similarities with work of Robinson and Zakon on (archimedean) ordered abelian groups \cite{RoZa60}.

\begin{proposition}\label{P:ordered-PID}
Let $R$ be an ordered ring. Assume the following conditions:
\begin{enumerate}
\item[(i)] $R$ is a principal ideal domain;
\item[(ii)] $R$ is densely ordered;
\item[(iii)] for every prime $\pi\in R$, the ideal $\pi R$ is dense in $R$, and
\item[(iv)] for every prime $\pi\in R$, $R/\pi R$ is infinite.
\end{enumerate}
  
  \smallskip
  
Let $T$ be the $\L_{R- mod,<}$-theory of $R$, considered as an ordered module over itself. Then the following holds:
\begin{enumerate}
\item A non-zero ordered $R$-module $M$ is a model of $T$ iff, for every prime $\pi\in R$,
\begin{enumerate}
\item[(a)] $\pi M$ is dense in $M$, and
\item[(b)] $M/\pi M$ is infinite.
\end{enumerate}
\item $T$ eliminates quantifiers in the language $\L_{R- mod,<}\cup\{ P_r, r\in R\}$, where $P_r(x):\Leftrightarrow \exists y \, r\cdot y=x$. 
\item $T$ is $\NIP$.
\end{enumerate}
\end{proposition}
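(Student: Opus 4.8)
\textbf{Plan of proof for Proposition \ref{P:ordered-PID}.}

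The plan is to prove the three items in order, since (2) and (3) rely on (1), and the quantifier elimination in (2) will be the engine for the NIP conclusion in (3). For (1), I would first check that $R$ itself satisfies (a) and (b) for every prime $\pi$: (b) is exactly hypothesis (iv), and (a) follows from (iii). The interesting direction is that any non-zero ordered $R$-module $M$ satisfying (a) and (b) is elementarily equivalent to $R$. I would do this by a back-and-forth argument between $\aleph_1$-saturated models, building partial isomorphisms defined on pure $R$-submodules. The key local extension step is: given such a partial iso $f : A \to A'$ and an element $m \in M \setminus A$, one wants to realize the "cut" of $m$ over $A$ together with its divisibility type (which $\lambda_r$-divisors it acquires over $A$). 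Since $R$ is a PID, the divisibility type of $m$ over $A$ is controlled primewise, and conditions (a), (b) (together with density (ii) of $R$, hence of $M$ by a similar argument) give enough room in the target to find a matching element. Here one uses that in a torsion-free module over a PID, finitely-generated pure submodules are free, so the relevant cosets $m + \pi A$ etc.\ behave well; density of $\pi M$ in $M$ lets one slide within a coset to hit the prescribed cut, and $M/\pi M$ infinite guarantees cosets are not pinned down.

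For (2), quantifier elimination in the language $\L_{R\text{-}mod,<}\cup\{P_r : r\in R\}$, I would use the standard test: show that for models $M, N$ with $N$ sufficiently saturated, every partial isomorphism between finitely generated substructures (now closed under the $P_r$-witnesses, i.e.\ corresponding to pure submodules) extends to any one more element. This is essentially the same one-point extension analysis as in (1), now carried out with the explicit predicates $P_r$ recording divisibility, so that the substructures one tracks are genuinely closed under the relevant operations; the PID hypothesis ensures a coherent choice of witnesses (an element divisible by $r$ and by $s$ is divisible by a suitable combination), and density plus infiniteness of residue rings again provide the freedom to extend. I would treat the order part of the extension (matching the cut of the new element) and the divisibility part essentially independently, modulo finitely many congruence constraints — this is where orthogonality of "order data" and "congruence data" becomes visible.

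For (3), $\NIP$, having QE in mind, it suffices to show that every formula of the form $\psi(x; \bar y)$ where $\psi$ is a Boolean combination of atomic formulas in the extended language is $\NIP$, and for that it suffices to bound the VC-type complexity of the basic building blocks: linear equalities/inequalities $\sum r_i x_i \lessgtr \sum s_j y_j$, and divisibility predicates $P_r\big(\sum r_i x_i - \sum s_j y_j\big)$. The first kind is a half-space condition in the ordered-group sense and is $\NIP$ by the Gurevich–Schmidt theorem on ordered abelian groups (one pulls back along the definable linear map). The second kind defines, for fixed $\bar y$, a union of cosets of a fixed subgroup $r M$, i.e.\ a stable formula — every module is stable, so these contribute no IP. Since $\NIP$ formulas are closed under Boolean combinations, any formula is $\NIP$ and hence $T$ is $\NIP$. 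I expect the main obstacle to be the one-point extension lemma underlying both (1) and (2): making precise, in a PID setting with a dense order, that one can simultaneously realize a prescribed cut and a prescribed (primewise, finitely-constrained) divisibility type, and verifying that the hypotheses (i)–(iv) are exactly what is needed — in particular that density of each $\pi R$ in $R$ is what lets the order datum and the congruence datum be chosen compatibly rather than obstructing each other.
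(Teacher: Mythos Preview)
Your plan is sound and arrives at the same destination, but the paper organises the back-and-forth more economically, and the key trick is worth seeing. Rather than proving (1) and (2) separately, the paper shows directly that the theory $T'$ of non-zero ordered $R$-modules satisfying (a) and (b) eliminates quantifiers in $\L_{R\text{-}mod,<}\cup\{P_r\}$; this yields completeness (hence (1)) and (2) at once. For the one-point extension step, the paper does \emph{not} attempt to realise the cut and the divisibility type simultaneously as you propose. Instead it invokes, as a black box, the classical quantifier elimination for $R$-modules (no order) in the language $\L_{R\text{-}mod}\cup\{P_r\}$: this produces some $\tilde{b}'$ in the target with the correct divisibility type over $B$. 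Then one corrects the cut by adding a single element $d$ that is divisible by \emph{every} non-zero $r\in R$ and lies in the appropriate interval; such $d$ exists by saturation because condition (a) implies $rN$ is dense in $N$ for all $r$, and adding a fully divisible element leaves every $P_r$-value unchanged. This cleanly decouples the two problems and avoids the ``finitely many congruence constraints'' bookkeeping you anticipate. Your approach would also work, but you would need to argue that each coset of $\pi M$ is dense (which does follow from (a)) and then run a compactness argument over all primes; the paper's shortcut via a fully divisible correction term is tidier.

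For (3) your argument is correct and essentially what the paper has in mind, though the paper is terser: it simply notes that, after QE, it suffices to check NIP for formulas with a single object variable $x$, and that NIP formulas are closed under Boolean combinations. Your explicit decomposition into order atoms (NIP, even o-minimal in a dense setting) and divisibility atoms (stable) is exactly the intended content; invoking Gurevich--Schmitt is harmless but more than is needed here.
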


\begin{proof}
It is a classical result (see, e.g.\ \cite{Pre88}) that in the class of $R$-modules (without the order), for a ring $R$ satisfying (i) and (iv), a non-zero $R$-module $M$ 
is elementarily equivalent to $R$ (as an $R$-module) iff $M$ is torsion free and $M/\pi M$ is infinite for every prime $\pi\in R$. Moreover, $T\upharpoonright_{\L_{R- mod}}$ 
eliminates quantifiers in the language $\L_{R- mod}\cup\{ P_r, r\in R\}$.

Now put $\L=\L_{R- mod,<}\cup\{ P_r, r\in R\}$, and let $T'$ be the $\L$-theory of non-zero ordered $R$-modules satisfying (a) and (b). We will show that $T'$ 
eliminates quantifiers in $\L$. This will prove (1) and (2). We use a standard back-and-forth argument. Let $M$ and $N$ be two models of $T'$, with 
$N$ sufficiently saturated. Assume that $f:A\cong B$ is an $\L$-isomorphism between finitely generated substructures $A\subseteq M$ and $B\subseteq N$. 

Now let $\tilde{a}\in M$. If $\tilde{a}$ is in the divisible hull of $A$, then $f$ extends (even uniquely) to an $\L$-isomorphism $g:A+R\tilde{a}\rightarrow B+R\tilde{b}$ for some 
$\tilde{b}\in N$. 

We now assume that $\tilde{a}$ is not in the divisible hull of $A$. By the elimination of quantifiers down to $\L_{R- mod}\cup\{ P_r, r\in R\}$ mentioned in the first paragraph, there is $\tilde{b}'\in N$ 
such that $\tilde{a}\mapsto \tilde{b}'$ defines an extension of $f$ to an $\L_{R- mod}\cup\{ P_r, r\in R\}$-isomorphism $g':A+R\tilde{a}\cong B+R\tilde{b}'$. Of course, $g'$ might 
not preserve the order. We will show that there is $d\in N$ such that $d$ is divisible by every non-zero $r\in R$ and such that $\tilde{a}\mapsto \tilde{b}=\tilde{b}'+d$ defines an extension of $f$ to an $\L$-isomorphism $g: A+R\tilde{a}\cong B+R\tilde{b}$. 

Let $Q(R)$ be the field of fractions of $R$. Recall that if $C$ is an ordered $R$-module, the order extends uniquely to $Q(C)=C\otimes_R Q(R)$ so that $Q(C)$ is an ordered $R$-module extending $C$. Now by assumption we have $R\tilde{a}\cap A=(0)$, so $\tilde{a}$ determines a cut $(L,R)$ in $Q(A)$. Let $(f(L),f(R))$ be the cut over $Q(B)$ induced by 
$f$. Over $Q(B+R\tilde{b}')$, we may look at the (consistent) partial type $\pi(x)$ given by $f(L)-\tilde{b}'<x<f(R)-\tilde{b}'$. Note that the density assumption (a) implies that $rN$ is dense in $N$ for any non-zero $r$. 
Thus, by saturation of $N$, we may find $d\in N$ such that 
$d$ is divisible by any non-zero $r\in R$ and such that $d\models \pi$. By construction, $\tilde{b}=\tilde{b}'+d$ is as we want. (In particular $\tilde{b}$ is not in the divisible hull of $B$.)

(3) follows from (2), taking into account that it is enough to show that any formula $\phi(x,y)$ with $x$ a singleton is NIP, and that NIP formulas are closed under Boolean combinations. 
\end{proof}

We note that condition (ii) in the proposition actually follows from (iii) and (i).

\begin{corollary}\label{C:ExampleNIP}
Consider the ordered field $\Q(\sigma)$, with $\sigma\gg1$. Then every ordered subring of $\Q(\sigma)$ containing $\Q[\sigma,\sigma^{-1}]$ is $\NIP$, 
considered as an ordered module over itself. In particular, the ordered difference group $\Q[\sigma,\sigma^{-1}]$ is $\NIP$.
\end{corollary}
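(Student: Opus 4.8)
The plan is to derive Corollary \ref{C:ExampleNIP} directly from Proposition \ref{P:ordered-PID} by checking that each ordered subring $R$ in question satisfies the four hypotheses (i)--(iv) of the proposition, and then invoking part (3). So first I would fix $R$ with $\Q[\sigma,\sigma^{-1}]\subseteq R\subseteq\Q(\sigma)$, the ordering being the unique one extending $\sigma\gg 1$ (i.e.\ the ordering of $\Q(\sigma)$ restricted to $R$). The point is that $R$ is then an ordered module over itself in the language $\L_{R-\mathrm{mod},<}$, and Proposition \ref{P:ordered-PID}(3) gives $\NIP$ once (i)--(iv) hold.

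Next I would verify the four conditions. For (i), note that any subring of a field which contains a field (here $\Q$) and is a localization-like overring of a PID is again a PID: more concretely, $R$ is an overring of the PID $\Q[\sigma]$ (every $x\in R$ is a quotient of polynomials over $\Q$), and overrings of a PID inside its fraction field are themselves PIDs (they are localizations of $\Q[\sigma]$ at a multiplicative set, hence Dedekind domains with trivial class group, hence PIDs). For (ii), density of the ordering of $R$ is inherited from that of $\Q(\sigma)$, which is densely ordered (being an ordered field); alternatively, as the paper remarks, (ii) follows from (i) and (iii). For (iv), if $\pi\in R$ is prime then $R/\pi R$ is a field (since $\pi R$ is maximal in the PID $R$) containing the image of $\Q$, and this image is injective because $\pi\notin\Q$ (units of $\Q$ are not prime and $\pi\notin\Q^\times$ forces $\pi$ nonconstant), so $R/\pi R$ is a field of characteristic $0$, hence infinite.

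The condition requiring a little more care is (iii): for every prime $\pi\in R$, the ideal $\pi R$ should be dense in $R$ in the order topology. Here the key observation is that $\pi R$ contains $\pi\cdot\Q[\sigma,\sigma^{-1}]$, and multiplication by $\pi$ is an order-automorphism of $\Q(\sigma)$ (as $\pi>0$ or $\pi<0$, and in either case scaling an ordered field by a nonzero element is an order-preserving or order-reversing bijection), so it suffices to know that $\Q[\sigma,\sigma^{-1}]$ is dense in $R$, equivalently (applying $\pi^{-1}$) that $\pi^{-1}\Q[\sigma,\sigma^{-1}]$ is dense, which reduces to density of $\Q[\sigma,\sigma^{-1}]$ itself in $\Q(\sigma)$ (and a fortiori in $R$). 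That density holds because between any two elements $f/g<h/k$ of $\Q(\sigma)$ one can insert a Laurent polynomial: picking $g,k$ positive we need a Laurent polynomial strictly between $fk$ and $hg$ after clearing denominators, and since $\Q$ is dense in $\R$ and the archimedean value of the target can be matched by choosing a suitable power $\sigma^n$ times a rational, one produces such an element explicitly. I expect this density argument to be the only genuinely non-formal step; the rest is bookkeeping. Finally, the ``in particular'' clause is the case $R=\Q[\sigma,\sigma^{-1}]$ itself, which visibly lies between $\Q[\sigma,\sigma^{-1}]$ and $\Q(\sigma)$, so it is covered, and as an ordered module over itself it is exactly the ordered difference group $\Q[\sigma,\sigma^{-1}]$ from Section \ref{Sub:ODG}, giving the stated conclusion.
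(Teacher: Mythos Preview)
Your approach is exactly the paper's: verify hypotheses (i)--(iv) of Proposition \ref{P:ordered-PID} and invoke part (3). The arguments for (i), (ii), (iv) match (the paper phrases (iv) as ``$\pi R$ is a proper $\Q$-vector subspace of $R$'', which is the same as your ``$R/\pi R$ is a field of characteristic $0$''). For (iii) both you and the paper reduce to the single claim that $\Q[\sigma,\sigma^{-1}]$ is dense in the ambient ordered field, and then use that multiplication by a fixed nonzero element takes dense sets to dense sets.

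The one genuine weakness is your justification of that density claim. The ``clearing denominators'' step is not correct as written: from $f/g<p<h/k$ with $g,k>0$ you get $fk<p\cdot gk<hg$, so what you need between $fk$ and $hg$ is an element of $gk\cdot\Q[\sigma,\sigma^{-1}]$, not an arbitrary Laurent polynomial; and the appeal to ``$\Q$ dense in $\R$'' and ``archimedean value'' is out of place since $\Q(\sigma)$ is non-archimedean. The paper sidesteps this by observing that $\Q[\sigma,\sigma^{-1}]$ is dense in the Laurent series field $\Q((\sigma))\supseteq\Q(\sigma)$, where density is immediate by truncation, and that for any nonzero $s$ the set $s\,\Q[\sigma,\sigma^{-1}]$ is likewise dense there. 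If you prefer to stay inside $\Q(\sigma)$: given $a<b$, expand $c=(a+b)/2$ as a formal series $\sum_{n\le N} c_n\sigma^n$, let $M$ be the leading exponent of $(b-a)/2$, and take $p=\sum_{M\le n\le N} c_n\sigma^n\in\Q[\sigma,\sigma^{-1}]$; then $|c-p|$ has leading exponent $<M$, hence $|c-p|<(b-a)/2$.
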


\begin{proof}
We need to show that the hypotheses of Proposition \ref{P:ordered-PID} hold. So let $R$ be an ordered ring with 
$\Q[\sigma,\sigma^{-1}]\subsetneq\Q(\sigma)$.

If $A$ is a PID with field of fractions $K$, then 
every ring $B$ with $A\subseteq B\subseteq K$ is a PID, as $B$ is necessarily a localisation of $R$. This shows that $R$ is a PID.

Property (ii) holds since $(R,+)$ is a divisible ordered abelian group. 

For (iii), note that $\Q[\sigma,\sigma^{-1}]$ is dense in $\Q((\sigma))$. More generally, for any $0\neq s\in\Q((\sigma))$, 
the set $s\Q[\sigma,\sigma^{-1}]$ is dense in $\Q((\sigma))$. In particular, for any prime $\pi$ of $R$, $\pi R$ is dense in $\Q((\sigma))$, and so in $R$ as well. 

(iv) is clear, as $\pi R$ is a proper $\Q$-vector subspace of $R$.
\end{proof}

\bibliography{VFA_NTP2}

\def\cprime{$'$}
\begin{thebibliography}{AvdD11}

\bibitem[Adl07]{AdlBurden}
Hans Adler.
\newblock Strong theories, burden, and weight.
\newblock {\em Preprint}, 2007.

\bibitem[Adl08]{AdlNIP}
Hans Adler.
\newblock An introduction to theories without the independence property.
\newblock {\em Archive Math. Logic, to appear}, 2008.

\bibitem[AvdD11]{AzVa11}
Salih Azg{\i}n and Lou van~den Dries.
\newblock Elementary theory of valued fields with a valuation-preserving
  automorphism.
\newblock {\em J. Inst. Math. Jussieu}, 10(1):1--35, 2011.

\bibitem[Azg07]{Azg07}
Salih Azg{\i}n.
\newblock {\em Model theory of valued difference fields}.
\newblock PhD thesis, University of Illinois at Urbana-Champaign, 2007.

\bibitem[Azg10]{Azg10}
Salih Azg{\i}n.
\newblock Valued fields with contractive automorphism and {K}aplansky fields.
\newblock {\em J. Algebra}, 324(10):2757--2785, 2010.

\bibitem[BMS07]{BeMaSc07}
Luc B{\'e}lair, Angus Macintyre, and Thomas Scanlon.
\newblock Model theory of the {F}robenius on the {W}itt vectors.
\newblock {\em Amer. J. Math.}, 129(3):665--721, 2007.

\bibitem[BYC12]{CheBY}
Ita\"{i} Ben~Yaacov and Artem Chernikov.
\newblock An independence theorem for {$NTP_2$} theories.
\newblock {\em Submitted, arXiv:1207.0289v1}, 2012.

\bibitem[CH99]{ChHr99}
Zo{\'e} Chatzidakis and Ehud Hrushovski.
\newblock Model theory of difference fields.
\newblock {\em Trans. Amer. Math. Soc.}, 351(8):2997--3071, 1999.

\bibitem[Che]{Che12}
Artem Chernikov.
\newblock Theories without the tree property of the second kind.
\newblock {\em Annals of Pure and Applied Logic, accepted}.

\bibitem[CK12]{CheKap}
Artem Chernikov and Itay Kaplan.
\newblock Forking and dividing in ${NTP_2}$ theories.
\newblock {\em J. Symbolic Logic}, 77(1):1--20, 2012.

\bibitem[Del81]{Del81}
Fran{\c{c}}oise Delon.
\newblock Types sur {${\bf C}((X))$}.
\newblock In {\em Study {G}roup on {S}table {T}heories ({B}runo {P}oizat),
  {S}econd year: 1978/79 ({F}rench)}, pages Exp. No. 5, 29. Secr\'etariat
  Math., Paris, 1981.

\bibitem[Gia11]{Gia11}
Gabriel Giabicani.
\newblock {\em Th\'eorie de l'intersection en g\'eom\'etrie aux diff\'erences}.
\newblock PhD thesis, \'Ecole Polytechnique, Palaiseau, 2011.

\bibitem[GS84]{GuSc84}
Yuri Gurevich and Peter~H. Schmitt.
\newblock The theory of ordered abelian groups does not have the independence
  property.
\newblock {\em Trans. Amer. Math. Soc.}, 284(1):171--182, 1984.

\bibitem[HHM08]{HHMStableDomination}
Deirdre Haskell, Ehud Hrushovski, and Dugald Macpherson.
\newblock {\em Stable domination and independence in algebraically closed
  valued fields}, volume~30 of {\em Lecture Notes in Logic}.
\newblock Association for Symbolic Logic, Chicago, IL, 2008.

\bibitem[HK06]{HruKazh}
Ehud Hrushovski and David Kazhdan.
\newblock Integration in valued fields.
\newblock In {\em Algebraic geometry and number theory}, volume 253 of {\em
  Progr. Math.}, pages 261--405. Birkh\"auser Boston, Boston, MA, 2006.

\bibitem[HL11]{HruLoes}
Ehud Hrushovski and Fran\c{c}ois Loeser.
\newblock Non-archimedean tame topology and stably dominated types.
\newblock {\em Preprint, arXiv:1009.0252v2}, 2011.

\bibitem[Hod93]{Hod93}
Wilfrid Hodges.
\newblock {\em Model theory}.
\newblock Cambridge University Press, 1993.

\bibitem[Hru01]{HruMM}
Ehud Hrushovski.
\newblock The {M}anin-{M}umford conjecture and the model theory of difference
  fields.
\newblock {\em Ann. Pure Appl. Logic}, 112(1):43--115, 2001.

\bibitem[Hru02]{Hru02}
Ehud Hrushovski.
\newblock Valued fields with automorphisms.
\newblock Manuscript, 2002.

\bibitem[Hru04]{Hru04}
Ehud Hrushovski.
\newblock The elementary theory of the {F}robenius automorphisms.
\newblock 2004, arXiv:0406514v1 [math.LO].

\bibitem[Pal12]{Pal10}
Koushik Pal.
\newblock Multiplicative valued difference fields.
\newblock {\em J. Symbolic Logic}, 77(2):545--579, 2012.

\bibitem[Pas89]{Pas89}
Johan Pas.
\newblock Uniform {$p$}-adic cell decomposition and local zeta functions.
\newblock {\em J. Reine Angew. Math.}, 399:137--172, 1989.

\bibitem[Pre88]{Pre88}
Mike Prest.
\newblock {\em Model theory and modules}, volume 130 of {\em London
  Mathematical Society Lecture Note Series}.
\newblock Cambridge University Press, Cambridge, 1988.

\bibitem[RZ60]{RoZa60}
Abraham Robinson and Elias Zakon.
\newblock Elementary properties of ordered abelian groups.
\newblock {\em Trans. Amer. Math. Soc.}, 96:222--236, 1960.

\bibitem[Sca00]{Sca00}
Thomas Scanlon.
\newblock A model complete theory of valued {$D$}-fields.
\newblock {\em J. Symbolic Logic}, 65(4):1758--1784, 2000.

\bibitem[Sca03]{Sca03}
Thomas Scanlon.
\newblock Quantifier elimination for the relative {F}robenius.
\newblock In {\em Valuation theory and its applications, {V}ol. {II}
  ({S}askatoon, {SK}, 1999)}, volume~33 of {\em Fields Inst. Commun.}, pages
  323--352. Amer. Math. Soc., Providence, RI, 2003.

\bibitem[She80]{MR595012}
Saharon Shelah.
\newblock Simple unstable theories.
\newblock {\em Ann. Math. Logic}, 19(3):177--203, 1980.

\bibitem[She90]{MR1083551}
Saharon Shelah.
\newblock {\em Classification theory and the number of nonisomorphic models},
  volume~92 of {\em Studies in Logic and the Foundations of Mathematics}.
\newblock North-Holland Publishing Co., Amsterdam, second edition, 1990.

\bibitem[She09]{She09}
Saharon Shelah.
\newblock Strongly dependent theories.
\newblock 2009, arXiv:math/0504197v4 [math.LO].

\bibitem[Sim12]{Sim12}
Pierre Simon.
\newblock Lecture notes on $\mathrm{NIP}$ theories.
\newblock 2012, arXiv:1208.3944 [math.LO].

\bibitem[Wag00]{WagnerBook}
Frank~O. Wagner.
\newblock {\em Simple theories}, volume 503 of {\em Mathematics and its
  Applications}.
\newblock Kluwer Academic Publishers, Dordrecht, 2000.

\end{thebibliography}

\end{document}